\DeclareMathAlphabet{\mathscr}{T1}{calligra}{m}{n}
\theoremstyle{plain}
\newtheorem{theorem}{Theorem}[subsection]
\newtheorem{proposition}[theorem]{Proposition}
\newtheorem{corollary}[theorem]{Corollary}
\newtheorem{lemma}[theorem]{Lemma}
\theoremstyle{definition}
\newtheorem{definition}[theorem]{Definition}
\theoremstyle{remark}
\newtheorem{remark}[theorem]{Remark}
\newcommand{\End}{\mathcal{E}nd}
\DeclareMathOperator{\Sym}{Sym}
\DeclareMathOperator{\rk}{rk}
\DeclareMathOperator{\Pic}{Pic}
\DeclareMathOperator{\tr}{tr}
\DeclareMathOperator{\Res}{res}
\DeclareMathOperator{\Ker}{Ker}
\DeclareMathOperator{\Ima}{Im}
\newcommand{\id}{\ensuremath{\text{Id}}}
\newcommand{\cC}{\mathcal{C}}
\newcommand{\cB}{\mathcal{B}}
\newcommand{\cE}{\mathcal{E}}
\newcommand{\cO}{\mathcal{O}}
\newcommand{\cM}{\mathcal{M}}
\newcommand{\cD}{\mathcal{D}}
\newcommand{\cL}{\mathcal{L}}
\newcommand{\cA}{\mathcal{A}}
\newcommand{\calC}{\mathcal{C}}
\newcommand{\GG}{\mathbb{G}}
\newcommand{\LL}{\mathcal{L}}
\newcommand{\MM}{\mathcal{M}}
\newcommand{\OO}{\mathcal{O}}
\newcommand{\higgs}{\operatorname{H}}
\newcommand{\semis}{\operatorname{ss}}
\newcommand{\stable}{\operatorname{s}}
\newcommand{\HH}{\mathcal{H}}
\newcommand{\SL}{\operatorname{SL}}
\newcommand{\SU}{\operatorname{SU}}
\newcommand{\GL}{\operatorname{GL}}
\newcommand{\PGL}{\operatorname{PGL}}
\newcommand{\ev}{\operatorname{ev}}
\newcommand{\cX}{\mathcal{X}}
\newcommand{\cY}{\mathcal{Y}}
\newcommand{\ad}{\mathrm{ad}}
\newcommand{\diffn}{\mathcal{D}^{(n)}}
\newcommand{\diffone}{\mathcal{D}^{(1)}}
\newcommand{\difftwo}{\mathcal{D}^{(2)}}
\newcommand{\diffthree}{\mathcal{D}^{(3)}}
\newcommand{\Hit}{\operatorname{Hit}}
\newcommand{\Id}{\operatorname{Id}}
\newcommand{\atalg}{\mathcal{A}}
\newcommand{\doi}[1]{\textsc{doi}: \href{http://dx.doi.org/#1}{\nolinkurl{#1}}}
\title{The Hitchin Connection in Arbitrary Characteristic}
\author{Thomas Baier}
\address{Thomas Baier\\ CAMGSD\\
Instituto Superior T\'ecnico\\
Av. Rovisco Pais\\
1049-001 Lisboa\\
Portugal}
\email{tbaier@math.tecnico.ulisboa.pt}
\author[Michele Bolognesi]{Michele Bolognesi}
\address{Michele Bolognesi\\
Institut Montpelli\'erain Alexander Grothendieck \\ UMR 5149 CNRS \\ Universit\'e de Montpellier\\
Place Eug\`ene Bataillon\\
34095 Montpellier Cedex 5\\ France}
\email{michele.bolognesi@umontpellier.fr}
\author{Johan Martens}
\address{Johan Martens\\ School of Mathematics and Maxwell Institute\\ The University of Edinburgh\\ Peter Guthrie Tait Road\\ Edinburgh EH9 3FD\\ United Kingdom}
\email{johan.martens@ed.ac.uk}
\author{Christian \textsc{Pauly}}
\address{Christian Pauly \\ Laboratoire de Math\'ematiques J.A. Dieudonn\'e \\ UMR  7351 CNRS \\ Universit\'e de Nice Sophia-Antipolis \\ 06108 Nice Cedex 02, France}
\email{pauly@unice.fr}
\thanks{TB was supported in part by FCT/Portugal through the projects UID/MAT/04459/2013 and PTDC/MAT-GEO/3319/2014.  JM was supported in part by EPSRC grant EP/N029828/1. CP was supported in part by
the Marie Curie project GEOMODULI of the programme FP7/PEOPLE/2013/CIG, project number 618471.}
\date{\today}							
\begin{document}

\begin{abstract}
We give an algebro-geometric construction of the Hitchin connection, valid also in positive characteristic (with a few exceptions).  A key ingredient is a substitute for the Narasimhan-Atiyah-Bott K\"ahler form that realizes the Chern class of the determinant-of-cohomology line bundle on the moduli space of bundles on a curve.  As replacement we use an explicit realisation of the Atiyah class of this line bundle, based on the theory of the trace complex due to Beilinson-Schechtman and Bloch-Esnault.
\end{abstract}

\maketitle

\section{Introduction}
\subsection{}The Hitchin connection was originally introduced in \cite{hitchin:1990}, with a two-fold motivation.  The first was an elucidation of the $2+1$ dimensional topological quantum field theory proposed by Witten to explain the polynomial Jones invariants for knots \cite{witten:1989, atiyah:1990}.  The second was the question of the dependency of the geometric quantisation of a symplectic manifold on the choice of polarisation.

In a beautiful construction, Hitchin exhibited a flat projective connection on the bundles of non-abelian theta functions over the base of a family of compact Riemann surfaces.  For a fixed Riemann surface, the corresponding vector space can be understood to be the geometric quantisation of the moduli space of flat unitary connections on the underlying surface.  The latter carries a canonical symplectic structure, but the complex structure on the surface also equips the moduli space with a K\"ahler polarisation, and the connection indicates precisely how the quantisation varies. 

Even though the construction of the connection uses analytic and K\"ahler techniques throughout, it was already observed by Hitchin that the end result could entirely be interpreted in terms of algebraic geometry, and should in fact hold in positive characteristic as well (see \cite[\S 5]{hitchin:1990a}).  
This in itself is not too surprising, bearing in mind that one of the sources of inspiration for Hitchin was the work of Welters {\cite{welters:1983}}, which generalised the heat equation that (abelian) theta-functions had classically been know to satisfy to positive characteristic.  Welters work was probably the first in which a cohomological approach to heat equations was developed; the non-abelian situation is quite a bit more involved, however. 

The aim of this paper now is to give a new, purely algebro-geometric, construction of the Hitchin connection, without using any analytic or K\"ahler techniques.  This construction works as well in positive characteristic (apart from a few exceptions, see below), which as far as we are aware is a first, for either the Hitchin connection itself or any of the equivalent connections (such as the KZB or TUY/WZW connection from conformal field theory -- see however \cite{schechtman.varchenko:2019} for a recent study of the KZ equation in positive characteristic).  We stress that the construction only involves (finite-dimensional) algebraic geometry, and in particular no infinite-dimensional representation theory -- the only prerequisites needed are covered by \cite{ega}.

Key elements in our construction are a framework for connections coming from heat operators in algebraic geometry, due to van Geemen and de Jong \cite{vangeemen.dejong:1998}, as well as a substitute for the Narasimhan-Atiyah-Bott K\"ahler form \cite{narasimhan:1970, atiyah.bott:1983}, which according to Quillen \cite{quillen:1985} realizes the Chern class of the determinant-of-cohomology line bundle.  The serendipitous similarity between this K\"ahler form and the quadratic part of the Hitchin system were crucially used in \cite{hitchin:1990} to obtain the Hitchin connection in the complex case.  

We compensate for the absence of this K\"ahler form by interpreting the cohomology class of the line bundle as an Atiyah class. This difference in guaranteeing the  cohomological conditions of the Theorem of van Geemen and de Jong forms the bulk of our work.
  An essential ingredient of our construction is the description of the Atiyah algebra of the theta line bundle over the moduli space in terms of the first direct image of the Atiyah algebra of a universal bundle (Theorem \ref{maintracecompl}). A complete proof is given in section \ref{sectionbigproof} and in appendices \ref{appendixtracecomplex} and \ref{appendixsplitting}, whose aim is to give a simplified and self-contained presentation of the results used in the proof of this theorem, i.e. the theory of the trace complex \cite{beilinson.schechtman:1988}, \cite{bloch.esnault:2002} and some additional inputs worked out in \cite{sun.tsai:2004}, describing the behaviour of the above objects when replacing a universal bundle by its endomorphism bundle. We observe that the paper \cite{sun.tsai:2004} also describes a construction of the Hitchin connection, but the strategy in \cite{sun.tsai:2004} is different from ours: they construct the Hitchin connection by relying on another argument from \cite{faltings:1997}, whereas our approach seeks to verify directly the van Geemen--de Jong criterion for the liftability of a symbol map to a heat operator.

\subsection{} At this point, we would like to make a few comments on the relationship of this work to the existing literature.  
As already mentioned, we will follow the algebro-geometric framework of van Geemen and de Jong \cite{vangeemen.dejong:1998} for connections induced by heat operators.  This provides a purely cohomological criterion for the existence of a heat operator with a prescribed symbol map.
 
 In \cite[\S 2.3.8]{vangeemen.dejong:1998} van Geemen and de Jong show how their framework of connections induced by heat-operators easily re-captures
Welters' construction of the Mumford-Welters projective connection on bundles of theta functions.  
The main point of their work is to use 
 this framework (which we resume below in Theorem \ref{vgdj}) to construct a Hitchin connection (in complex algebraic geometry) in the particular case of rank $2$ bundles on genus $2$ curves (which was excluded from Hitchin's original work, and indeed from ours as well).  They do not re-establish the Hitchin connection in all other cases though, and in this sense 
 the present paper exactly complements their work.

 We remark that several other algebro-geometric descriptions of connections on bundles of non-abelian theta functions have appeared in the literature -- e.g. \cite{faltings:1997, ramadas:1998, ginzburg:1995, ran:2006,sun.tsai:2004,ben-zvi.frenkel:2004}.  It is not always clear however exactly how these connections are related, see e.g. \cite{faltings-vs-hitchin}, and for various reasons they are all restricted to characteristic zero. None also directly use the framework of van Geemen and de Jong. We remark that many of the properties of Hitchin's original connection like e.g. monodromy \cite{laszlo.pauly.sorger:2013} or projective flatness of strange duality maps \cite{belkale:2009} have been proved with representation-theoretical methods, more precisely by using its equivalence, due to Laszlo \cite{laszlo:1998}, with the TUY/WZW connection on spaces of conformal blocks \cite{TUY:1989, tsuchimoto:1993}.
   For most of the cited works the relationship with conformal blocks is undeveloped (they have of course other motivations: e.g. \cite{sun.tsai:2004}, which together with \cite{ginzburg:1995} is probably closest to our approach, is particularly focused on the logarithmic description of the connection as the curves degenerate to nodal singularities).
We therefore thought it useful to establish the Hitchin connection itself, in the original context (moduli of bundles with trivial determinant over curves), in a purely algebro-geometric way that nevertheless manifestly gives the same connection as Hitchin, and to which Laszlo's theorem immediately applies.  For completeness, we mention that there are several other constructions in the literature of a differential geometric or K\"ahler nature, e.g. \cite{andersen.gammelgaard.lauridsen:2012,axelrod.dellapietra.witten:1991,scheinost.schottenloher:1995}.

We want to mention that (because of Laszlo's theorem) the term \emph{Hitchin connection} is often loosely employed to refer to any of a number of equivalent projective connections.  We shall use it in a much stricter sense however, as a connection arising through a heat operator with a prescribed symbol map (see below).

In this context the terminology \emph{non-abelian theta functions} is frequently used (including by us), even though that is in fact slightly misleading.  Our construction of the connection only works for moduli spaces of bundles with trivial determinant, or equivalently, $\SL(r)$-principal bundles.  At various places the (semi-)simplicity is crucial, and as far as we are aware there is currently no construction that works immediately for arbitrary reductive groups.  Indeed, a connection for moduli of $\GL(r)$-principal bundles was crucially needed in \cite{belkale:2009}, but this was created out of an $\SL(r)$-connection and an (abelian) $\mathbb{G}_m$-connection.

\subsection{} As a motivation for looking at the Hitchin connection 
from a purely algebro-geometric point of view, we would like to highlight three contexts.  The first is the Grothendieck-Katz $p$-curvature conjecture \cite{katz:1972}, which (roughly speaking) claims that every algebraic connection which is formulated in sufficient generality and has vanishing $p$-curvature when reduced mod $p$ for almost all $p$ should have finite monodromy in the complex case.  Presumably motivated by this conjecture it was originally expected (see \cite[\S 7]{brylinski.mclaughlin:1994}) that the Hitchin connection would have finite monodromy.  However, it was shown by Masbaum in \cite{masbaum:1999} that, for rank $2$, the image of the corresponding projective representation of the mapping class group will, for all genera and almost all levels, contain elements of infinite order.  This came somewhat as a surprise, as the connection for abelian theta-functions was well known to have finite monodromy from Mumford's approach through theta groups.   Masbaum was working with a skein-theoretic approach to these representations, but the equivalence of this picture with the Hitchin connection follows from the work of Andersen and Ueno \cite{andersen.ueno:2015} combined with Laszlo's theorem.  Masbaum's result was also directly re-derived in an algebro-geometric context by Laszlo, Sorger, and the fourth named author \cite{laszlo.pauly.sorger:2013}.  We hope that our construction can be a starting point for investigating the $p$-curvature of the Hitchin connection.

The second is the question of integrality of TQFTs, and the related topic of modular representations of the mapping class group.  Various results have been obtained here through a skein-theoretic approach, cfr. \cite{gilmer:2004, gilmer.masbaum:2007, gilmer.masbaum:2014, gilmer.masbaum:2017}, but so far a geometric counterpart is missing.  We again hope that the current work can help shed light on these issues.

Finally we would like to mention various generalisations of the connection constructed here, by looking at variations of the moduli problem of vector bundles on curves.  A minor variation is by looking at moduli spaces of $G$-principal bundles, where $G$ is a semi-simple group.  One could also equip the curve with marked points, and look for parabolic structures of the bundle at these points.  All of these can be understood as special cases of the moduli problem of $\mathcal{G}$-torsors, where $\mathcal{G}$ is a parahoric Bruhat-Tits group scheme over the curve (see e.g. \cite{pappas.rapoport:2010,heinloth:2010,balaji-seshadri:2015}).  We hope to come back to the Hitchin connection in this generality in the near future, and expect that the construction developed in this paper, bypassing the need for an explicit description of a K\"ahler form, will facilitate this.

\subsection{} The rest of the paper is organised as follows. 
In Section \ref{recaphitchin} a summary of Hitchin's work is given, explaining the context of variation of K\"ahler polarisation in geometric quantisation.  There are essentially two parts to this: a general framework that gives conditions under which a projective connection exists (Theorem \ref{mainHitchin}), and a discussion of why these conditions are satisfied in the case of moduli spaces of flat unitary connections on surfaces. Though none of what follows later logically depends on this, we nevertheless wanted to include a brief overview of Hitchin's original construction to highlight the extent to which our exposition parallels his.  

The remainder of the paper is then concerned with our algebro-geometric construction of the Hitchin connection.  In Section \ref{contextvgdj}, after a quick review of Atiyah sequences and Atiyah classes, the notion of heat operators, their relations to connections, and the main framework of van Geemen and de Jong is given (Theorem \ref{vgdj}).  We present the latter as a counterpart to Theorem \ref{mainHitchin}, and for completeness we have included a proof of it and of Hitchin's flatness criterion (Theorem \ref{thm_flatness}), to highlight that these results hold in arbitrary characteristic, as the  original discussion in \cite{vangeemen.dejong:1998} was strictly speaking just in a complex context.

Section \ref{mainconstruction} then goes on to show that the conditions of Theorem \ref{vgdj} are indeed satisfied, culminating in Theorem \ref{existenceconnection}.  The primary tool to this end is Proposition \ref{phi-rho-L}, and most of the rest of the section is essentially a (necessarily lengthy) \emph{mise en place} to obtain this result.  As stated above, the key element is Theorem \ref{maintracecompl}, which realizes the Atiyah class of the determinant-of-cohomology line bundle as a particular extension, given as the first derived functor of the push down of the dual of the Atiyah sequence of the universal bundle on the moduli space of bundles.  This provides an analogue to the theorem of Quillen that realizes the Chern class of the line bundle as a particular K\"ahler form.  Just as in Hitchin's original approach, it is this particular realisation that allows us to verify the cohomological conditions of Theorem \ref{vgdj}.
Theorem \ref{maintracecompl} is itself obtained from a variation on the theory of the trace complex, of which we give a self-contained account in Appendix \ref{appendixtracecomplex}.  The proof of  Theorem \ref{maintracecompl} takes up Section \ref{sectionbigproof}.   Finally, the other appendices contain proofs of various facts we use in the main body of the article, but for which we could not find references in the generality we needed.
\subsection{} To finish the introduction, we state the necessary restrictions on the characteristic $p$ of the base field $\Bbbk$, and their sources. The first limitation that we encounter is due to the use of the trace and the trace pairing:
\[\begin{tikzcd}
\operatorname{tr}:\cE nd(E) \ar[r] & \cO, &\operatorname{Tr}: \cE nd(E) \times \cE nd(E) \ar[r] & \cO.
\end{tikzcd}
\]
We need these to behave similarly as they do in characteristic zero.  In particular we want the trace $\operatorname{tr}$ to split equivariantly , i.e. $\cE nd(E)=\cE nd^0(E)\oplus \cO$, where $\cE nd^0(E)$ is the kernel of $\operatorname{tr}$.  This is induced from an $\SL(r)$-equivariant splitting of the short exact sequence of Lie algebras 
\[\begin{tikzcd} 0\ar[r] & \mathfrak{sl}(r)\ar[r] & \mathfrak{gl}(r)\ar[r] & \Bbbk \ar[r] & 0,\end{tikzcd}\] which requires $p \nmid r$.  Secondly, we want the trace pairing $\operatorname{Tr}$, which is non-degenerate for all possible characteristics $p$ and $r=rk(E)$, to remain non-degenerate when restricted to $\cE nd^0(E)\times \cE nd^0(E)$. This is again true if and only if $p \nmid r$.

The second limitation is due to the use of  differential operators (cf. \cite[IV, \S 16.8]{ega}) and their symbols: in characteristic $p>0$ one considers the algebra of differential operators associated to the Atiyah algebra $\mathcal{D}^{(1)}_{\mathcal{M}/S}(L)$ and defined as a quotient of its universal enveloping algebra -- see \cite[1.1.3]{beilinson.schechtman:1988}. Up to order $k=p-1$ these however coincide with $\mathcal{D}^{(k)}_{\mathcal{M}/S}(L)$, and we have the symbol map to $\Sym^k T_{\mathcal{M}/S}$ with its usual properties at our disposal. As the construction of connections via heat operators uses second order operators and their symbols, we exclude characteristic 2; in the flatness criterion also third-order symbols appear, hence there we also exclude $p=3$.

Furthermore, we also use trace complexes; the original reference avoids positive characteristic, but as we use only part of the theory we check in Appendix \ref{appendixtracecomplex} that everything works with the restrictions already in place: in order for the  residue $\widetilde{\Res}$ from \cite[page 658]{beilinson.schechtman:1988} to be well defined, we need to avoid characteristic 2.

The third and last limitation is due to the formula in Thm. \ref{existenceconnection}, where there is a factor $\frac{1}{r+k}$. Hence we also need to assume that $p\nmid (r+k)$.
\subsection{Acknowledgments} The authors would like to thank J\o rgen Andersen, Prakash Belkale, C\'edric Bonnaf\'e, Najmuddin Fakhruddin, Emilio Franco, Bert van Geemen, Jochen Heinloth, Nigel Hitchin, Gregor Masbaum, Swarnava Muk\-ho\-padh\-yay, Jon Pridham, Brent Pym,  Pavel Safronov, Richard Wentworth and Hacen Zelaci for useful conversations and remarks at various stages of this work.  This work grew out of another project of the first and third named authors that was joint with J\o rgen Andersen, Peter Gothen and Shehryar Sikander -- they thank all three of them for related discussions.

\section{Heat operators and connections - summary of the work of Hitchin}\label{recaphitchin}
We outline in this section the original work of Hitchin that establishes the flat projective connection on bundles of non-abelian theta functions. 
Hitchin's motivation came from geometric quantisation and K\"ahler geometry, and he mainly used analytic or K\"ahler techniques. 
\subsection{Change of K\"ahler polarisation} Inspired by earlier work of Welters \cite{welters:1983}, the Hitchin connection was introduced in \cite{hitchin:1990} in the context of geometric quantisation: given a compact (real) symplectic manifold $(\cM,\omega)$, with pre-quantum line bundle $L$, Hitchin studied how the geometric quantisations with respect to different K\"ahler polarisations were related.  In particular, he gave the following general criterium for the existence of a projective connection on the bundle of quantisations:
\begin{theorem}[Hitchin, {\cite[Theorem 1.20]{hitchin:1990}}] \label{mainHitchin}
Given a family of K\"ahler polarisations on $\cM$, such that for each polarisation we have 
\begin{enumerate}[(a)]
\item The map $$\begin{tikzcd}\cup [\omega]: H^0(\cM,T_{\cM}) \ar[r] &H^1(\cM, \mathcal{O}_{\cM})\end{tikzcd}$$ is an isomorphism (this means that there are no holomorphic vector fields which fix $L$, i.e. $H^0(\cM, \diffone_{\cM}(L))=H^0(\cM, \mathcal{O}_{\cM})$);
\item\label{hitchintwo} For each $s\in H^0(\cM,L)$ and tangent vector $\overset{.}{I}$ to the base of the family there exists a smoothly varying $$A(\overset{.}{I}, s)\in \mathbb{H}^1(\cM, \diffone_{\cM}(L)\overset{.s}{\rightarrow} L)$$
such that the symbol $-i\sigma_1 (A(\overset{.}{I}, s))$ equals the Kodaira-Spencer class $[\overset{.}{I}]$ in\linebreak $H^1(\cM, T_{\cM})$.
\end{enumerate}
Then this defines a projective connection on the bundle of projective spaces $\mathbb{P}(H^0(\cM, L))$ over the base of the family.
\end{theorem}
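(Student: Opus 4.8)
The plan is to produce the projective connection by correcting a naive differentiation of holomorphic sections, much as Welters did in the abelian (theta-function) case. \emph{First}, I would organise the quantisations into a bundle. Writing the family as $\cM \times S$ with $S$ the base and $\cM$ carrying the varying complex structure, and pulling $L$ back to this total space, the direct image $\HH$ of the resulting line bundle along the projection to $S$ is a vector bundle over $S$ with fibre $H^0(\cM, L)$; local constancy of $h^0(\cM, L)$ — needed even to speak of the bundle $\PP(H^0(\cM, L))$ — holds in the situations of interest through positivity of the prequantum bundle, a vanishing theorem and Riemann--Roch. Fix a connection $\nabla^{L}$ on the pulled-back line bundle whose restriction to each polarised fibre has $(0,1)$-part the corresponding Dolbeault operator $\bar\partial$. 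For a tangent vector $\dot I$ to $S$, the operator $\nabla^{L}_{\dot I}$ then differentiates a holomorphic family $s$ of sections, but the resulting section of $L$ over the fibre is in general only smooth, not holomorphic for that fibre's complex structure; cancelling this defect is the heart of the matter.

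\emph{Second}, I would identify the defect with the Kodaira--Spencer class. Differentiating the relation $\bar\partial s = 0$ along $\dot I$ yields
\[
\bar\partial\bigl(\nabla^{L}_{\dot I}\, s\bigr) \;=\; -\,\bigl(\partial_{\dot I}\bar\partial\bigr)(s),
\]
and $\partial_{\dot I}\bar\partial$ is, up to a correction by the curvature of $L$, the operation of contracting the $(1,0)$-derivative of $s$ against a Dolbeault representative in $A^{0,1}(\cM, T^{1,0}\cM)$ of the Kodaira--Spencer class $[\dot I] \in H^1(\cM, T_\cM)$. So, to make $\nabla_{\dot I}\, s := \nabla^{L}_{\dot I}\, s - A(\dot I)(s)$ holomorphic, I must exhibit a differential operator $A(\dot I)$ on sections of $L$ whose $\bar\partial$-derivative reproduces precisely this term. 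In Hitchin's original argument the K\"ahler metric is used here to ``integrate'' the Beltrami differential representing $[\dot I]$ into a symmetric bivector; at the level of the present abstract statement, however, hypothesis (b) supplies the required object directly. The class $A(\dot I, s) \in \mathbb H^1(\cM, \diffone_\cM(L) \xrightarrow{\,\cdot s\,} L)$ packages (local choices of) the correcting operator together with the data needed to glue them, and the condition $-i\,\sigma_1(A(\dot I, s)) = [\dot I]$ in $H^1(\cM, T_\cM)$ is exactly the requirement that the $\bar\partial$-derivative of the correction cancels the defect. I would therefore take $A(\dot I, s)$ from (b), set $\nabla_{\dot I}\, s = \nabla^{L}_{\dot I}\, s - A(\dot I, s)$, and check — with the lift chosen linearly in $s$ and smoothly over $S$ — that $\dot I \mapsto \nabla_{\dot I}$ is $\Bbbk$-linear, satisfies the Leibniz rule, and preserves holomorphicity fibrewise, i.e. is a connection on $\HH$.

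\emph{Third}, and this is the step I expect to be the main obstacle, I would address well-definedness. The construction rests on two choices — the connection $\nabla^{L}$ and the hypercohomology lift of $[\dot I]$ — and changing them alters $\nabla$ by an $\End(\HH)$-valued $1$-form on $S$; to descend to a projective connection on $\PP(\HH) = \PP(H^0(\cM, L))$ this $1$-form must be a multiple of the identity, depending on $\dot I$ alone. This is where hypothesis (a) is used. The ambiguity in the correcting operator with its symbol $[\dot I]$ fixed is controlled by the global holomorphic first-order differential operators on $L$ that preserve holomorphic sections, and the long exact sequence of the Atiyah sequence $0 \to \cO_\cM \to \diffone_\cM(L) \to T_\cM \to 0$, combined with the two halves of (a) — the injectivity of $\cup[\omega]$, equivalently $H^0(\cM, \diffone_\cM(L)) = H^0(\cM, \cO_\cM)$, and its surjectivity, equivalently the vanishing of $H^1(\cM, \cO_\cM) \to H^1(\cM, \diffone_\cM(L))$ — pins this space down to the scalars $H^0(\cM, \cO_\cM)$. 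The delicate part is the bookkeeping: tracking how the two sources of ambiguity interact through the long exact sequence of $\bigl[\diffone_\cM(L) \xrightarrow{\,\cdot s\,} L\bigr]$, verifying that only the scalar freedom survives, and making rigorous the passage between the analytic defect computation and the cohomological symbol condition. Once this is in hand, the induced connection on $\PP(\HH)$ over $S$ is canonical, which is the assertion of the theorem.
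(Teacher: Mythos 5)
The paper does not actually prove this statement: Theorem \ref{mainHitchin} is quoted from Hitchin's article purely as background, and the authors say explicitly that nothing later depends on it; what they prove instead is the algebro-geometric counterpart, Theorem \ref{vgdj}, by a quite different sheaf-theoretic argument (pushing forward $\mathcal{W}_{\cM/S}(L)$ and analysing connecting homomorphisms). So your proposal has to be measured against Hitchin's own analytic proof, and in outline it does follow that route: differentiate $\bar\partial s=0$ along the family, identify the defect with the contraction of $s$ against a Kodaira--Spencer representative, correct using the datum of (b), and invoke (a) for rigidity. That architecture is the right one.

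There are, however, two genuine gaps. First, your step three claims that condition (a) makes the outcome independent, up to scalars, of the choice of the hypercohomology lift $A(\dot I,s)$. That is not true: from the short exact sequence of complexes $0\to(\cO_{\cM}\overset{.s}{\rightarrow}L)\to(\diffone_{\cM}(L)\overset{.s}{\rightarrow}L)\to(T_{\cM}\to 0)\to 0$, the classes with prescribed symbol $[\dot I]$ form a torsor under the image of $\mathbb{H}^1(\cM,\cO_{\cM}\overset{.s}{\rightarrow}L)$, which contains $H^0(\cM,L)\big/\Bbbk s$; changing $A(\dot I,s)$ by such an element changes $\nabla_{\dot I}s$ by an essentially arbitrary holomorphic section, i.e.\ by an endomorphism-valued one-form on the base which is not a multiple of the identity, and hypothesis (a) does nothing to remove this freedom. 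The theorem has to be read as saying that the \emph{given} smoothly (and linearly) varying family $A(\cdot,\cdot)$ of (b) defines the projective connection; what (a) actually buys is (i) that every global holomorphic first-order operator on $L$ is a scalar, so the passage from the class $A(\dot I,s)$ to an honest correcting operator is independent of the chosen cocycle or Dolbeault representative up to scalars, and (ii) via surjectivity of $\cup[\omega]$, that the zeroth-order discrepancy remaining after matching the symbol representative of $A$ to the chosen Beltrami representative of $[\dot I]$ can be absorbed. Second, and relatedly, the step you defer as ``bookkeeping''---producing, from a class whose symbol agrees with $[\dot I]$ only as an element of $H^1(\cM,T_{\cM})$, an actual global correction whose $\bar\partial$ cancels the explicit defect $\dot{\bar\partial}(s)$---is exactly where (a) is consumed and is the substance of Hitchin's proof; as written, your argument records the expected shape of this verification but does not carry it out.
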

Here $\diffone_{\cM}(L)$ denotes the sheaf of first order differential operators on $L$ and 
$\sigma_1$ its symbol map to $T_{\cM}$. The map $.s:\diffone_{\cM}(L)\rightarrow L$ is just given by evaluating the differential operators on the section $s$, and $\mathbb{H}^1$ stands for the first hypercohomology group of the two-term complex.

Note that the space of infinitesimal deformations of the pair $(\cM, L)$ is given by\linebreak $H^1(\cM, \diffone_{\cM}(L))$, and likewise the space of infinitesimal deformations of the triple\linebreak $(\cM,L,s)$, for $s\in H^0(\cM,L)$, is given by 
$\mathbb{H}^1(\cM, \diffone_{\cM}(L)\overset{.s}{\rightarrow} L)$ (cfr. \cite[Proposition 1.2]{welters:1983}).
\subsection{Moduli spaces of flat unitary connections} Moreover, Hitchin then showed that the conditions of Theorem \ref{mainHitchin} are satisfied in the case where $(\cM,\omega)$ is the space of flat, unitary, tracefree connections on the trivial rank $r$ bundle over a closed oriented surface $\calC$ of genus $g\geq 2$ (with the exception of the case $r=2, g=2$), and $L=\LL^k$ is a power of the positive generator $\LL$ of its Picard group. This space is not quite a manifold, but its smooth locus is canonically a symplectic manifold, with $\omega$ the Goldman-Karshon symplectic 
form (which uses a Killing form on the Lie algebra of $\SU(r)$).
  
If $\calC$ is equipped with the structure of a Riemann surface (or, equivalently, regarded as a smooth complex projective curve), then $\cM$ can be understood as the moduli space of semi-stable rank $r$ vector bundles with trivial determinant, which is a projective variety.  The symplectic form $\omega$ is then moreover a K\"ahler form, as discussed by Narasimhan \cite{narasimhan:1970} and Atiyah-Bott \cite{atiyah.bott:1983}. By Quillen's theorem \cite{quillen:1985}, the inverse $\LL$ of the determinant-of-cohomology line bundle provides a pre-quantum line bundle.

In particular, we can understand the $A(\overset{.}{I}, s)$ as follows in this situation:  we have the short exact sequence of complexes
\begin{equation}\label{sesofcomplex}\begin{tikzcd}[row sep=small]
0\ar[r] &\diffone_{\cM} (\LL^k) \ar[r] \ar[d, ".s"] & \difftwo_{\cM} (\LL^k)\ar[r] \ar[d, ".s"] & \Sym^2 T_{\cM}\ar[r] \ar[d]& 0\\
0\ar[r] & \LL^k \ar[r] &\LL^k \ar[r] & 0 \ar[r] &0.
\end{tikzcd}\end{equation}
This gives a connecting homomorphism \begin{equation}\label{boundary}\begin{tikzcd}\delta: H^0(\cM, \Sym^2(T_{\cM})) \ar[r] & \mathbb{H}^1(\cM, \diffone_{\cM}(\LL^k)\overset{.s}{\rightarrow} \LL^k).\end{tikzcd}
\end{equation}
On the other hand, the quadratic part of the Hitchin system (which also uses the Killing form) gives, for every holomorphic vector bundle $E$ on $\calC$ with trivial determinant, a map $$\begin{tikzcd}\Sym^2 H^0(\calC,\End^0(E)\otimes K_{\calC}) \ar[r] & H^0(\calC, K^2_{\calC}),\end{tikzcd}$$ where $K_{\calC}$ is the canonical bundle of $\calC$.
Dualizing this, and using Serre duality on $\calC$ gives, for each $E$, a map 
$$\begin{tikzcd}H^1(\calC, T_{\calC}) \ar[r] & \Sym^2  H^1(\calC, \End^0(E)),\end{tikzcd}$$ 
where $\End^0(E)$ is the sheaf of trace-free endomorphisms of $E$.
Since for each stable $E$ the space $H^1(\calC, \End^0(E))$ is the tangent space to the moduli space (in casu $\cM$), we can write this as a map \begin{equation}\begin{tikzcd}\label{rho}\rho:H^1(\calC, T_{\calC}) \ar[r] &H^0(\cM, \Sym^2 T_{\cM}).\end{tikzcd}\end{equation}
Composing this with (\ref{boundary}) gives a linear map $$\begin{tikzcd}A(., s): H^1(\calC, T_{\calC}) \ar[r] &\mathbb{H}^1(\cM, \diffone_{\cM}(\LL^k)\overset{s}{\rightarrow} \LL^k)\end{tikzcd}$$ which depends smoothly on $s$, and which Hitchin shows (after a rescaling by $\frac{1}{r+k}$) to satisfy 
the condition in \ref{hitchintwo} of Theorem \ref{mainHitchin}.
\begin{remark} Some key steps in Hitchin's approach were fundamentally differential geometric or K\"ahler in nature. In particular, the explicit description of the Narasimhan-Atiyah-Bott K\"ahler form, and its similarity to the symmetric two-tensors given by the symbol was crucially used.
\end{remark}

\section{Hitchin-type Connections in Algebraic Geometry}\label{contextvgdj} 
An algebro-geometric framework for connections determined by a heat equation (like the Hitchin connection) was developed by van Geemen and de Jong in \cite{vangeemen.dejong:1998}.   Besides being set in algebraic geometry as opposed to K\"ahler geometry, this description is also more local, in contrast with the infinitesimal framework of Theorem \ref{mainHitchin} of Hitchin (the latter is not a substantional difference however, cfr. \cite[\S 2.3.4]{vangeemen.dejong:1998}). We summarise the main parts and some related prerequisites below. 

From now on, everything will be defined over an algebraically closed field $\Bbbk$ of characteristic different from $2$.  We have to exclude characteristic $2$ for a variety of reasons, but in particular will also split the projection $T_{\cM}^{\otimes 2}\rightarrow \Sym^2 T_{\cM}$ throughout.  In this general section, $\cM \rightarrow S$ will be a smooth morphism of smooth schemes.
\subsection{Atiyah Algebroids, (projective) connections, and Atiyah classes}\label{seqandcon}
Our approach to connections essentially follows Atiyah's seminal exposition \cite{atiyah:1957}, but in this context we will phrase everything in terms of vector bundles rather than work with principal bundles.  
\subsubsection*{Atiyah algebroids}Let $\diffn_{\cM}(E)$ be the sheaf of differential operators of order at most $n$ on a vector bundle $E$ over $\cM$.  The associated symbol map will be denoted $$\sigma_n:\diffn_{\cM}(E)\rightarrow \Sym^n T_{\cM}\otimes \End(E).$$
\begin{definition}
The Atiyah sequence associated to a vector bundle $E\rightarrow \cM$ is the top row of the following diagram
\begin{equation*}
\begin{tikzcd}[row sep=small] 0\ar[r]& \End(E)\ar[r]\ar[d,equal]  & \atalg(E)\ar[r] \ar[d,hookrightarrow]& T_{\cM}\ar[r] \ar[d, hookrightarrow, "-\otimes \Id_E"]&0\\ 0 \ar[r] & \End(E)\ar[r] & \diffone_{\cM}(E)
  \ar[r, "\sigma_1"] & T_{\cM}\otimes \End(E)\ar[r] & 0.\end{tikzcd}
\end{equation*}
The middle term $\atalg(E)$ is called the Atiyah algebroid associated to $E$ (or, strictly speaking, to the frame bundle associated to $E$, which is a $\GL$-principal bundle).
\end{definition}
\begin{definition}
We will denote by $\atalg_{\cM/S}(E)$, the relative Atiyah algebroid associated to a vector bundle $E\rightarrow \cM$, where $\cM$ comes with a morphism $\pi: \cM\rightarrow S$ onto a base scheme $S$.  The associated relative Atiyah sequence is the top row of the following pull-back diagram:
\begin{equation}\label{relatiyahsequence}
  \begin{tikzcd}[row sep=small] 0\ar[r]& \End(E)\ar[r]\ar[d,equal]  & \atalg_{\cM/S}(E)\ar[r] \ar[d,hookrightarrow]& T_{\cM/S}\ar[r] \ar[d, hookrightarrow]&0\\ 0 \ar[r] & \End(E)\ar[r] & \atalg(E)\ar[r] & T_{\cM}\ar[r] & 0.\end{tikzcd}
\end{equation}
where $T_{\cM/S}$ is the subsheaf of vector fields tangent along the
fibers, i.e.,
$$T_{\cM/S} = \Ker(T_{\cM} \to \pi^* T_S).$$
\end{definition}
Finally, we need to define the trace-free Atiyah algebroid for vector bundles with trivial determinant.  Pushing out the standard Atiyah sequence by the trace map $\End(E)\rightarrow \OO$ gives a morphism of the Atiyah sequene of $E$ to that of $\det(E)$.  If the latter is trivial, its Atiyah sequence splits canonically, giving rise a morphism $\tr:\atalg(E)\rightarrow \OO$.  We define the trace free Atiyah algebroid $\atalg^0(E)$ to be the kernel of this map.  This all fits together in a commutative diagram (with exact horizontal rows and left vertical row) 
\begin{equation*}
  \begin{tikzcd}[row sep=small] 0\ar[r]& \End^0(E)\ar[r]\ar[d,hookrightarrow]  & \atalg^0(E)\ar[r, "\sigma_1"] \ar[d,hookrightarrow]& T_{\cM}\ar[r] \ar[d, equal]&0\\ 0 \ar[r] & \End(E)\ar[r] \ar[d, "\tr"] & \atalg(E)\ar[r,"\sigma_1"] \ar[d, "\tr + \sigma_1"]& T_{\cM}\ar[r]\ar[d, equal] & 0 \\ 0\ar[r] & \OO\ar[r] & \atalg(\det(E))\cong \OO\oplus T_{\cM}\ar[r] & T_{\cM}\ar[r] & 0. \end{tikzcd}
\end{equation*} The algebroid $\atalg^0(E)$ can be understood, in the language of principal bundles, as arising from the $\SL(r)$-principal frame bundle of $E$. Analogously there is also a relative version $\atalg^0_{\cM/S}(E)$.

Assuming $p \nmid r$, we have a direct sum decomposition 
$\cE nd(E) = \cE nd^0(E) \oplus \cO_{\cM}$ and we denote by
$q: \cE nd(E) \to \cE nd^0(E)$ the projection onto the first 
direct summand.  In this case, the trace-free Atiyah algebroid is also canonically isomorphic to the \emph{projective} Atiyah algebroid, i.e. the push-out of the standard Atiyah sequence 
by the map $q$ as follows
\begin{equation*}
\begin{tikzcd}[row sep=small] 
0 \ar[r] & \End(E) \ar[r]\ar[d,"q"] & \cA(E) \ar[r]\ar[d] & T_\cM \ar[r]\ar[d,equal] & 0 \\
0 \ar[r] & \End^0(E) \ar[r] & \cA^0(E) \ar[r] & T_\cM \ar[r] & 0.
\end{tikzcd}
\end{equation*} 
We will make this identification throughout.
\subsubsection*{Atiyah classes}
We will also need a relative version of the Atiyah class for a line bundle $L$.  There are a number of ways this can be defined; perhaps the easiest is by taking the top sequence of (\ref{relatiyahsequence}), tensoring it with $\Omega^1_{\cM/S}$, and applying $\pi_*$ to obtain a long exact sequence (of course for line bundles we have canonically $\End(L)\cong\cO$).
\begin{definition}
The image of the identity $\pi_\ast\Id\in\pi_*\big( \Omega^1_{\cM/S}\otimes T_{\cM/S}\big)$ under the connecting homomorphism yields a global section of $R^1\pi_* \big(\Omega^1_{\cM/S} \otimes \End(E)\big)$, which we shall refer to as the \emph{relative Atiyah class}, and denote by $[L]$.
\end{definition}

Note that the connecting homomorphism in the long exact sequence obtained by applying $\pi_*$ to the top sequence of (\ref{relatiyahsequence}) is given by cupping with $[L]$ and contracting.   In the absolute case, the Atiyah class is the obstruction to the existence of a connection on $L$; a similar interpretation holds in the relative case, though we will not use this.  If $\cM$ is complex K\"ahler, $[L]$ is just the relative Chern class.

The following lemma probably dates back to \cite{atiyah:1957}, see e.g. \cite[p. 431]{looijenga:2013}. 
\begin{lemma}\label{extens}
Let $X$ be a smooth algebraic variety, $L$ a line bundle, $k$ a positive integer, then we have an isomorphism of short exact sequences
\begin{equation*}
\begin{tikzcd}[row sep=small]
  0 \ar[r] & \cO_X \ar[r] \ar[d] & \cA(L^{\otimes k}) \ar[r]\ar[d] & T_X \ar[r]\ar[d] & 0 \\
  0 \ar[r] & \cO_X \ar[r, "\frac{1}{k}"] & \cA(L) \ar[r] & T_X \ar[r] & 0. 
\end{tikzcd}
\end{equation*}
\end{lemma}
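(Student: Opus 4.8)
The plan is to write down one explicit isomorphism between the two Atiyah algebroids and read off its effect on kernel and cokernel. Since $L$ is a line bundle we have $\End(L)\cong\cO_X$ and the symbol of a first-order operator already takes values in $T_X\otimes\End(L)\cong T_X$; hence $\cA(L)=\diffone_X(L)$ and $\cA(L^{\otimes k})=\diffone_X(L^{\otimes k})$, the subsheaf $\cO_X$ in each Atiyah sequence being the zeroth-order operators (multiplications by functions) and the surjection onto $T_X$ being $\sigma_1$.

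First I would define an $\cO_X$-linear map $\Phi\colon\cA(L)\to\cA(L^{\otimes k})$ by the Leibniz rule, sending a first-order operator $D$ on $L$ to
\[
\Phi(D)=\sum_{i=1}^{k}\Id_L^{\otimes(i-1)}\otimes D\otimes\Id_L^{\otimes(k-i)}.
\]
The one point that genuinely needs care is that this is well defined, i.e.\ that the naive formula for $\Phi(D)$ on a decomposable local section $s_1\otimes\cdots\otimes s_k$ of $L^{\otimes k}$ is independent of the chosen decomposition. This is where the fact that $D$ is a differential operator, not $\cO_X$-linear, intervenes: moving a function $f$ between two tensor factors changes the individual summands through $D(f s_i)=f\,D(s_i)+\xi(f)\,s_i$ (with $\xi=\sigma_1(D)$), but the extra contribution $\xi(f)\,s_1\otimes\cdots\otimes s_k$ is the same whichever factor $f$ is moved into, so the total is unchanged. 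Equivalently, in a local trivialisation $L|_U=\cO_U\cdot e$ with $D(e)=g\,e$ one checks that the operator $h\,e^{\otimes k}\mapsto(\xi(h)+k\,h\,g)\,e^{\otimes k}$ on $L^{\otimes k}|_U$ does not depend on $e$. I expect this verification to be the only real obstacle; it is a short explicit computation. Granting it, the same local formula shows at once that $\Phi(D)$ is again first order with $\sigma_1(\Phi(D))=\sigma_1(D)$, so $\Phi$ covers $\Id_{T_X}$, and that $\Phi$ carries ``multiplication by $h$'' to ``multiplication by $kh$'', i.e.\ restricts to multiplication by $k$ on the subsheaf $\cO_X$.

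Thus $\Phi$ is a morphism of the short exact sequences $0\to\cO_X\to\cA(L)\overset{\sigma_1}{\to}T_X\to0$ and $0\to\cO_X\to\cA(L^{\otimes k})\overset{\sigma_1}{\to}T_X\to0$ which is multiplication by $k$ on $\cO_X$ and the identity on $T_X$; as $k$ is invertible in $\Bbbk$ (which it must be for $\tfrac1k$ to be meaningful) the five lemma shows $\Phi$ is an isomorphism. Taking $\Phi^{-1}$ for the middle vertical map and the identity for the outer two then yields exactly the diagram of the statement, since $\Phi^{-1}$ precomposed with the inclusion $\cO_X\hookrightarrow\cA(L^{\otimes k})$ equals $\tfrac1k$ times the inclusion $\cO_X\hookrightarrow\cA(L)$. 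Finally I would note that the content of the lemma is just the additivity $[L^{\otimes k}]=k\,[L]$ of the (relative) Atiyah class, equivalently $c_1(L^{\otimes k})=k\,c_1(L)$; this can alternatively be seen through {\v C}ech cohomology, since on a trivialising cover with transition functions $g_{\alpha\beta}$ the Atiyah class of $L$ is represented by the cocycle $\{dg_{\alpha\beta}/g_{\alpha\beta}\}$, while $L^{\otimes k}$ has transition functions $g_{\alpha\beta}^{\,k}$, whence its Atiyah class is $\{k\,dg_{\alpha\beta}/g_{\alpha\beta}\}$.
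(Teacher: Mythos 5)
Your proof is correct. The paper itself gives no argument for this lemma (it only cites Atiyah and Looijenga), and your construction is precisely the standard one implicit in those references: the map $D\mapsto\sum_i \Id^{\otimes(i-1)}\otimes D\otimes\Id^{\otimes(k-i)}$, well defined because the symbol term $\sigma_1(D)(f)$ produced by moving a function between tensor factors is independent of the factor, is a morphism of Atiyah sequences inducing $k\cdot\Id$ on $\cO_X$ and $\Id$ on $T_X$, and inverting it (legitimate exactly when $k$ is invertible in $\Bbbk$, which the statement's $\tfrac1k$ presupposes and which is consistent with the paper's use of the lemma with $k=2r$ and $p\nmid 2r$) yields the asserted isomorphism of short exact sequences; equivalently, it is the additivity $[L^{\otimes k}]=k[L]$ of the Atiyah class, as your \v{C}ech remark makes explicit.
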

\subsubsection*{Projective connections}
\begin{definition}
Given a vector bundle $E$ on a variety $\cM$, a \emph{(Koszul) connection} $\nabla$ on $E$ is a $\cO_{\cM}$-linear splitting of the Atiyah algebroid: $$\begin{tikzcd} 0\ar[r]& \End(E)\ar[r] & \atalg(E)\ar[r] &T_{\cM} \ar[r] \ar[l, bend left=30, " \nabla", dashed] & 0.\end{tikzcd}$$  
The connection is said to be \emph{flat} (or integrable) if $\nabla$ preserves the Lie brackets (where the Lie bracket on $\atalg(E)$ is just the commutator of differential operators).
\end{definition}
The Hitchin connection is a projective connection.  There are a number of ways one can encode what a projective connection is: one could think in terms of $\PGL$ principal bundles, or work with the projectivisation $\mathbb{P}(E)$ of $E$, or work with twisted $\cD$-modules (cfr. \cite{beilinson.kazhdan:1990}, \cite[\S 1]{looijenga:2013}).  In our context, the most useful one is the following.
\begin{definition} Given a vector bundle $E$ on $\cM$ as before, a projective connection is a splitting 
$$\begin{tikzcd} 0\ar[r]& \End(E)/\cO_{\cM} \ar[r] & \atalg(E)/{\cO_{\cM}}\ar[r] &T_S\ar[r] \ar[l, bend left=30, dashed, " \nabla "] & 0.\end{tikzcd}$$
It is again flat if $\nabla$ preserves the Lie brackets.
\end{definition}
\subsection{Heat operators} 
Consider a smooth surjective morphism of smooth schemes $\pi:\cM\rightarrow S$, and a line bundle $L\rightarrow \cM$ such that $\pi_\ast L$ is 
locally free, hence a vector bundle. The connection we construct will live on the projectivisation $\mathbb{P}\pi_\ast L$, but everything below will be expressed in terms of vector bundles, not projective bundles. 

We will denote by $\diffn_{\cM/S}(L)$ the subsheaf of $\diffn_{\cM}(L)$ consisting of differential operators of order at most $n$ that are $\pi^{-1}(\mathcal{O}_S)$ linear.  The symbol maps
$$\begin{tikzcd}\sigma_n: \mathcal{D}^{(n)}_{\cM/S}(L) \ar[r] & \Sym^n T_{\cM/S}\end{tikzcd} $$
take values in $\Sym^n T_{\cM/S}$.

We are now interested in the sheaf $$\mathcal{W}_{\cM/S}(L)=\diffone_{\cM}(L)+\difftwo_{\cM/S}(L)\subset \difftwo_{\cM}(L).$$
Besides the second order symbol map $$\begin{tikzcd} \sigma_2: \mathcal{W}_{\cM/S}(L) \ar[r] &\Sym^2T_{\cM/S},\end{tikzcd}$$ on this sheaf of differential operators, there is a subprincipal symbol
\begin{equation}\label{subprincipal} \begin{tikzcd}
  \sigma_S: \mathcal{W}_{\cM/S}(L)  \ar[r] &\pi^\ast T_{S} ,
  \qquad 
  \langle \sigma_S(D),d (\pi^\ast f) \rangle s =  D(\pi^\ast f s) - \pi^\ast f D(s).\end{tikzcd}
\end{equation}
where $s$ is a local section of $L$ and $f$ a local section of $\mathcal{O}_S$;
both well-definedness and the Leibniz rule follow from the property of the second order symbol
\[
 D(fg s) = \langle \sigma_2(D) , df \otimes dg \rangle s + f D(gs)+g D(fs)-fg D(s) .
\]
Thus we have a short exact sequence
\begin{equation}\label{ses_W}
  \begin{tikzcd}0\ar[r] & \diffone_{\cM/S}(L)\ar[r] & \mathcal{W}_{\cM/S}(L) \ar[r,"^{\sigma_S\oplus\sigma_2}"] &\pi^* (T_S)\oplus \Sym^2 T_{\cM/S}\ar[r] & 0.\end{tikzcd}
\end{equation}
We can now define 
\begin{definition}[{\cite[2.3.2]{vangeemen.dejong:1998}}]
A \emph{heat operator} $D$ on $L$ is a $\mathcal{O}_S$-linear map of coherent sheaves $$\begin{tikzcd}D:T_S  \ar[r] &\pi_\ast \mathcal{W}_{\cM/S}(L)\end{tikzcd}$$ such that $\sigma_S \circ 
\widetilde{D}=\id$, where $\widetilde{D}$ is the equivalent (by
adjunction) $\cO_{\cM}$-linear map
$$\widetilde{D} : \pi^* T_S \to \mathcal{W}_{\cM/S}(L).$$
Similarly a \emph{projective heat operator} is a map 
$$\begin{tikzcd}D: T_S  \ar[r] & \left( \pi_\ast \mathcal{W}_{\cM/S}(L) \right) / \mathcal{O}_S.\end{tikzcd}$$
\end{definition}
Given such a heat operator, we refer to $$\begin{tikzcd}
\pi_*(\sigma_2) \circ D: T_S\ \ar[r] &\pi_\ast\Sym^2 T_{\cM/S}\end{tikzcd} $$ as the \emph{symbol} of the heat operator.
  Also a projective heat operator has a well-defined symbol.
\subsection{Heat operators and connections} Any heat operator gives rise to a connection on the locally free sheaf $\pi_* L$, as follows (cfr. \cite[\S 2.3.3]{vangeemen.dejong:1998}).  Given an open subvariety $U\subset S$, and $\theta \in T(U)$, we want a first order differential operator $$\begin{tikzcd}\nabla_{\theta}: \pi_*  L\ \ar[r] &\pi_* L.\end{tikzcd}$$
If $s\in \pi_* L(U)$, we denote by $s$ and $\pi^{-1}(\theta)$ the corresponding sections of $L(\pi^{-1}(U))$ and $\pi^{-1}(T_S)(\pi^{-1}(U))$ respectively.  We can now put
\begin{equation}\label{conn-heat-op}
 \nabla_{\theta}s=D(\pi^{-1}(\theta))(s),
\end{equation}
since the latter indeed corresponds to a section of $\pi_* L(U)$.  Moreover, the Leibniz rule is satisfied since the subprincipal symbol of $D(\pi^{-1}\theta)$ is $\pi^{-1}\theta$, so that for any $f\in \mathcal{O}_S(U)$ we have
\[
\nabla_{\theta}(fs)=D(\pi^{-1}(\theta))(\pi^\ast(f) s) = \pi^\ast(\theta(f)) s+ \pi^\ast(f) D(\pi^{-1}(\theta))(s)= \theta(f) s+ f\nabla_{\theta}s,
\]
so $\nabla_{\theta}$ is indeed a first order differential operator with symbol $\theta$, and hence $\nabla$ is indeed a Koszul connection.  

The connection $\nabla$ will be flat if $D$ preserves the Lie brackets.  If we have a projective heat operator, we still get a projective connection, with the same comment for flatness.

\subsection{A heat operator for a candidate symbol}
As an algebro-geometric counter-part to Hitchin's Theorem \ref{mainHitchin}, van Geemen and de Jong investigated under what conditions a candidate symbol map $$\begin{tikzcd}\rho: T_S \ar[r]& \pi_\ast \Sym^2 T_{\cM/S}\end{tikzcd}$$ actually arises as a symbol of a heat operator, i.e. whether it was possible to find a (projective) heat operator $D$ such that $\rho= \pi_*(\sigma_2) \circ D$.  Before we can state their result we need to recall two maps. The canonical short exact sequence $$\begin{tikzcd}0\ar[r]& T_{\cM/S}\ar[r]& T_{\cM}\ar[r]& \pi^*T_S\ar[r]& 0\end{tikzcd}$$ gives rise to the \emph{Kodaira-Spencer map}
\begin{equation}\label{ks}\begin{tikzcd}\kappa_{\cM/S}: T_{S} \ar[r] & R^1\pi_* T_{\cM/S}.\end{tikzcd}\end{equation}
Similarly, the short exact sequence \begin{equation}\label{sesnoext}\begin{tikzcd}0\ar[r]& T_{\cM/S}\ar[r]& \difftwo_{\cM/S}(L)/\mathcal{O}_{\cM}\ar[r]& \Sym^2 T_{\cM/S}\ar[r]& 0 \end{tikzcd}\end{equation} gives rise to the connecting homomorphism \begin{equation}\begin{tikzcd}\label{mu-L}\mu_{L}:\pi_* \Sym^2 T_{\cM/S}  \ar[r] & R^1\pi_* T_{\cM/S}.\end{tikzcd}
\end{equation}
We can now state 
\begin{theorem}[{van Geemen -- de Jong,\cite[\S 2.3.7]{vangeemen.dejong:1998}}]\label{vgdj} With $L$ and $\pi:\cM\rightarrow S$ as before, we have that
if, for a given $\rho: T_S \rightarrow \pi_\ast \Sym^2 T_{\cM/S}$,  \begin{enumerate}[(a)]
\item \label{vgdj-one} $\kappa_{\cM/S}+\mu_{L} \circ \rho=0,$ 
\item \label{vgdj-two} cupping with the relative Atiyah class \[\begin{tikzcd}\cup [L]: \pi_*T_{\cM/S}\ar[r] &R^1\pi_*\mathcal{O}_{\cM}\end{tikzcd}\] is an isomorphism, and
\item \label{vgdj-three} $\pi_*\mathcal{O}_{\cM}=\mathcal{O}_S$, 
\end{enumerate} then there exists a unique projective heat operator $D$ whose symbol is $\rho$.
\end{theorem}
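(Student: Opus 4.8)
The plan is to realise a projective heat operator with symbol $\rho$ as a lift in a single lifting problem, controlling the obstruction in two stages, each governed by one of the three hypotheses. \emph{Reformulation.} Unwinding the definitions, and using \ref{vgdj-three} together with the projection formula to identify $\pi_*\pi^*T_S=T_S$, a heat operator $D$ on $L$ with symbol $\rho$ is exactly an $\cO_S$-linear lift over $(\id,\rho)$ of the map obtained by applying $\pi_*$ to $\sigma_S\oplus\sigma_2$ in (\ref{ses_W}); a \emph{projective} heat operator is the same with the target replaced by its quotient by $\cO_S=\pi_*\cO_\cM$. Since the subsheaf $\cO_\cM\subset\diffone_{\cM/S}(L)\subset\mathcal{W}_{\cM/S}(L)$ of order-zero operators is annihilated by $\sigma_S$ and by $\sigma_2$, and $\diffone_{\cM/S}(L)/\cO_\cM\cong T_{\cM/S}$ via $\sigma_1$ (as $L$ is a line bundle), quotienting (\ref{ses_W}) by $\cO_\cM$ gives
\[
 0 \to T_{\cM/S} \to \mathcal{W}_{\cM/S}(L)/\cO_\cM \xrightarrow{\;\sigma_S\oplus\sigma_2\;} \pi^*T_S \oplus \Sym^2 T_{\cM/S} \to 0 .
\]
Applying $\pi_*$ produces a connecting map $\bar\partial\colon T_S\oplus\pi_*\Sym^2 T_{\cM/S}\to R^1\pi_* T_{\cM/S}$, and the primary obstruction to lifting $(\id,\rho)$ to $\pi_*(\mathcal{W}_{\cM/S}(L)/\cO_\cM)$ is $\bar\partial(\id,\rho)$.

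\emph{Primary obstruction --- hypothesis \ref{vgdj-one}.} I would then identify $\bar\partial$ by restriction along the two summands of its target. The preimage of $\pi^*T_S$ in $\mathcal{W}_{\cM/S}(L)/\cO_\cM$ is $\diffone_\cM(L)/\cO_\cM\cong T_\cM$, so the pullback extension is the canonical sequence $0\to T_{\cM/S}\to T_\cM\to\pi^*T_S\to 0$, whose connecting map is $\kappa_{\cM/S}$ by (\ref{ks}). The preimage of $\Sym^2 T_{\cM/S}$ is $\difftwo_{\cM/S}(L)/\cO_\cM$, so the pullback extension is (\ref{sesnoext}), whose connecting map is $\mu_L$ by (\ref{mu-L}). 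Hence $\bar\partial(\id,\rho)=\kappa_{\cM/S}+\mu_L\circ\rho$, which is zero by \ref{vgdj-one}. Therefore a lift $\bar D\colon T_S\to\pi_*(\mathcal{W}_{\cM/S}(L)/\cO_\cM)$ of $(\id,\rho)$ exists.

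\emph{Secondary obstruction --- hypotheses \ref{vgdj-two} and \ref{vgdj-three}.} Two such lifts differ by an element of $\Hom_{\cO_S}(T_S,\pi_* T_{\cM/S})$, the kernel of $\pi_*(\mathcal{W}_{\cM/S}(L)/\cO_\cM)\to T_S\oplus\pi_*\Sym^2 T_{\cM/S}$. To descend to the quotient by $\cO_S$ I would use the connecting map $\phi\colon\pi_*(\mathcal{W}_{\cM/S}(L)/\cO_\cM)\to R^1\pi_*\cO_\cM$ of $0\to\cO_\cM\to\mathcal{W}_{\cM/S}(L)\to\mathcal{W}_{\cM/S}(L)/\cO_\cM\to 0$: by \ref{vgdj-three} its kernel is precisely the image of $\pi_*\mathcal{W}_{\cM/S}(L)$, that is, $\pi_*\mathcal{W}_{\cM/S}(L)/\cO_S$. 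Restricting this sequence along $T_{\cM/S}\hookrightarrow\mathcal{W}_{\cM/S}(L)/\cO_\cM$ recovers the relative Atiyah sequence $0\to\cO_\cM\to\diffone_{\cM/S}(L)\to T_{\cM/S}\to 0$ of $L$ (the top row of (\ref{relatiyahsequence}) for $E=L$), so $\phi$ restricts on $\pi_* T_{\cM/S}$ to $\cup[L]$, an isomorphism by \ref{vgdj-two}. Thus, for any chosen lift $\bar D_0$, replacing it by $\bar D:=\bar D_0-(\cup[L])^{-1}\circ(\phi\circ\bar D_0)$ --- still a lift of $(\id,\rho)$, since the correction lands in $\pi_* T_{\cM/S}$ --- yields $\phi\circ\bar D=0$; hence $\bar D$ factors through $\pi_*\mathcal{W}_{\cM/S}(L)/\cO_S$ and defines the desired projective heat operator $D$ with symbol $\rho$. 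For uniqueness, any two projective heat operators with symbol $\rho$ give lifts $\bar D,\bar D'$ with $\phi\circ\bar D=\phi\circ\bar D'=0$ whose difference lies in $\Hom_{\cO_S}(T_S,\pi_* T_{\cM/S})$ and is annihilated by the injective map $\cup[L]$, hence vanishes.

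\emph{Main obstacle.} The delicate part is the bookkeeping of the nested sub- and quotient-extensions inside $\mathcal{W}_{\cM/S}(L)$ and the verification --- via functoriality of the symbol maps and of connecting homomorphisms under pullback of extensions, with care for signs --- that these reproduce exactly the canonical sequence, the sequence (\ref{sesnoext}), and the relative Atiyah sequence of $L$, so that the primary obstruction becomes $\kappa_{\cM/S}+\mu_L\circ\rho$ and the secondary one is governed by $\cup[L]$. One must also keep the split between the ``genuine heat operator'' obstruction in $R^1\pi_* T_{\cM/S}$ and the ``purely projective'' part in $R^1\pi_*\cO_\cM$ straight, since only the projective statement holds in this generality.
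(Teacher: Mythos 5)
Your argument is close to being complete, but there is one unjustified step, and it occurs exactly where the theorem's generality matters. From $\bar\partial\circ(\id,\rho)=0$ you conclude that a lift $\bar D_0\colon T_S\to\pi_*\bigl(\mathcal{W}_{\cM/S}(L)/\cO_\cM\bigr)$ of $(\id,\rho)$ exists. What the vanishing actually gives is only that $(\id,\rho)$ factors through $\Ker\bar\partial$, i.e.\ through the image of $\pi_*\bigl(\mathcal{W}_{\cM/S}(L)/\cO_\cM\bigr)$; to lift a morphism from $T_S$ through the resulting sheaf epimorphism, whose kernel is $\pi_*T_{\cM/S}$, one must still kill a class in $\Ext^1_{\cO_S}(T_S,\pi_*T_{\cM/S})$, and there is no reason for this to vanish in the stated generality. (Note that $\pi_*T_{\cM/S}\neq 0$ in the abelian-scheme case, which is among the intended applications of Van Geemen--De Jong; it is only for the moduli space of bundles that Proposition \ref{basicfacts} later makes this issue disappear.) Fortunately the gap is repairable with ingredients you already have: choose lifts $\bar D_0$ only locally on $S$, and observe that your corrected operator $\bar D=\bar D_0-(\cup[L])^{-1}\circ(\phi\circ\bar D_0)$ does not depend on the choice --- two local lifts differ by some $\epsilon\colon T_S\to\pi_*T_{\cM/S}$, and since $\phi$ restricts to $\cup[L]$ on $\pi_*T_{\cM/S}$ the correction subtracts exactly $\epsilon$ --- so the local corrected lifts glue to a global morphism into $\Ker\phi\cong\bigl(\pi_*\mathcal{W}_{\cM/S}(L)\bigr)/\cO_S$. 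With that amendment your existence argument is complete, and your uniqueness argument (difference lies in $\pi_*T_{\cM/S}$ and is killed by the injective $\cup[L]$) is correct as written.

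For comparison, the paper organises the same diagram chase so that no lift is ever chosen: it pushes forward (\ref{ses_W}) as it stands, uses condition (\ref{vgdj-two}) to conclude $\pi_*\diffone_{\cM/S}(L)=\cO_S$ and hence, with (\ref{vgdj-three}), an isomorphism $\bigl(\pi_*\mathcal{W}_{\cM/S}(L)\bigr)/\cO_S\cong\Ker\delta$, and then shows $(\theta,\rho(\theta))\in\Ker\delta$ by observing that $R^1\pi_*(\sigma_1)$ is injective (surjectivity of $\cup[L]$) and that the composition with it is $\kappa_{\cM/S}+\mu_L\circ\rho$. Your variant, quotienting by $\cO_\cM$ first, has the advantage of producing the primary obstruction $\kappa_{\cM/S}+\mu_L\circ\rho$ directly in $R^1\pi_*T_{\cM/S}$ without invoking injectivity of $R^1\pi_*(\sigma_1)$, at the cost of the lifting-and-descent bookkeeping above; your identifications of the two pulled-back extensions (the canonical tangent sequence giving $\kappa_{\cM/S}$, and (\ref{sesnoext}) giving $\mu_L$) and of $\phi|_{\pi_*T_{\cM/S}}=\cup[L]$ via the relative Atiyah sequence are all correct.
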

Note that even though the context of this theorem is entirely algebro-geometric and makes no reference to a symplectic form, the conditions are closely matched with those in Hitchin's Theorem \ref{mainHitchin}: the requirement of cupping with the Chern class being an isomorphism is identical in both cases, whereas from a quadratic symbol $\rho$ satisfying condition \ref{vgdj-one} we recover an element of the hypercohomology group in \ref{mainHitchin}.\ref{hitchintwo} via the long-exact sequence of hypercohomology obtained from (\ref{sesofcomplex}).  Finally, \ref{vgdj-three} is an appropriate weakening of the premise that $\mathcal{M}$ is compact (and connected) in Theorem \ref{mainHitchin}.
\begin{proof}
  Consider the long-exact sequence associated to the short exact sequence (\ref{ses_W}),
  \[
  \begin{tikzcd}[row sep=small]
    0\ar[r] & \pi_\ast \diffone_{\cM/S}(L)\ar[r] & \pi_\ast \mathcal{W}_{\cM/S}(L) \ar[r,"^{\pi_* \sigma_S\oplus \pi_* \sigma_2}"] & T_S\oplus \pi_\ast \Sym^2 T_{\cM/S} \ar[dll,swap,"\delta"] \\
    & R^1 \pi_\ast \diffone_{\cM/S}(L)\ar[r] & R^1 \pi_\ast \mathcal{W}_{\cM/S}(L) \ar[r] & \dots \end{tikzcd}
  \]
  As $\cup[L]$ is the connecting homomorphism in the long-exact sequence associated with the first order symbol map on $\diffone_{\cM/S}(L)$, condition \ref{vgdj-two} guarantees that $\mathcal{O}_S = \pi_\ast \mathcal{O}_{\cM} = \pi_\ast \diffone_{\cM/S}(L)$, i.e. all global first order operators on $L$ along the fibers of $\pi$ are of order zero. Using condition \ref{vgdj-three}, we obtain a commutative diagram with exact rows and columns 
  \[
  \begin{tikzcd}[row sep=small, column sep=tiny]
     & 0 \ar[d] & 0 \ar[d] \\
    0 \ar[r] & \pi_\ast \mathcal{O}_{\cM} \ar[r] \ar[d] &  \pi_\ast \mathcal{O}_{\cM} \ar[r] \ar[d] & 0 \ar[d] \\
    0\ar[r] & \pi_\ast \diffone_{\cM/S}(L)\ar[r] \ar[d] & \pi_\ast \mathcal{W}_{\cM/S}(L) \ar[r] \ar[d] & \Ker \delta \ar[r] \ar[d] & 0 \\
    0 \ar[r] & 0 \ar[r] & \left( \pi_\ast \mathcal{W}_{\cM/S}(L) \right) / \mathcal{O}_S \ar[r] \ar[d] & \Ker \delta \ar[r] \ar[d] & 0 \\
    & & 0 & 0
  \end{tikzcd}
  \]
  and therefore an isomorphism $ \left( \pi_\ast \mathcal{W}_{\cM/S}(L) \right) / \mathcal{O}_S \to \Ker \delta   $. It remains to show that our hypotheses imply that the image of the morphism
  \[\begin{tikzcd}
  T_S  \ar[r] & T_S \oplus \pi_* \Sym^2 T_{\cM/S} ,
  \qquad
  \theta  \ar[r, mapsto] & (\theta,\rho(\theta))
  \end{tikzcd}
  \]
  is contained in the kernel of the connecting homomorphism $\delta$. In order to do this, let us decompose $\delta=\delta_1 + \delta_2$ into its two components:
   $$\begin{tikzcd}\delta_1: T_S  \ar[r] & R^1\pi_*\mathcal{D}^{(1)}_{\cM/S}(L)\ \ \ \ \textrm{and}\ \ \ \  \delta_2:\pi_*\Sym^2 T_{\cM/S} \ar[r] & R^1\pi_*\mathcal{D}^{(1)}_{\cM/S}(L).\end{tikzcd}$$
   It is then straightforward to check that
   $$R^1\pi_*(\sigma_1)\circ \delta_1= \kappa_{\cM/S}\ \ \ \ \textrm{and}\ \ \ \ R^1\pi_*(\sigma_1)\circ \delta_2= \mu_{L}.$$
  Finally, we observe that $\sigma_1$ induces an injective map
$$\begin{tikzcd}R^1\pi_*(\sigma_1):R^1\pi_*\mathcal{D}^{(1)}_{\cM/S}(L)  \ar[r] & R^1\pi_* T_{\cM/S},\end{tikzcd}$$
as the previous map in the long exact sequence 
$$\begin{tikzcd}\dots\ar[r] & \pi_{*}T_{\cM/S}\ar[r, "{\cup [L]}"] & R^1\pi_*{\cO_{\cM}} \ar[r] & R^1\pi_*\diffone_{\cM/S}(L) \ar[r] & R^1\pi_*T_{\cM/S}\ar[r]& \dots\end{tikzcd} $$
is surjective by condition \ref{vgdj-one}. Thus $(\theta, \rho(\theta))\in \Ker \delta$ 
if and only if $(\kappa_{\cM/S} + \mu_{L} \circ \rho)(\theta)=0$, for any local vector field $\theta$ on $S$.
\end{proof}

\subsection{A flatness criterion} To complete our outline of the general part of the theory, we discuss a general flatness condition for connections constructed via Theorem \ref{vgdj}. It is a verbatim translation of Hitchin's original reasoning \cite[Thm. 4.9]{hitchin:1990} to the algebro-geometric setting, its central ingredient being the requirement that the symbols should Poisson-commute when viewed as homogeneous functions on the relative cotangent bundle.

\begin{theorem}\label{thm_flatness} Under the conditions of Theorem \ref{vgdj} and over a base field of characteristic different from 3, the projective connection constructed from a symbol $\rho$ is projectively flat if
  \begin{enumerate}[(a)]
  \item for all local sections $\theta,\theta'$ of $T_S$,
    \[
     \{ \rho(\theta), \rho(\theta') \}_{T^\ast_{\cM/S}} = 0 ,
     \]
   \item the morphism $\mu_L$ is injective, and
   \item there are no vertical vector fields, $\pi_\ast T_{\cM/S}=0$.
  \end{enumerate}
\end{theorem}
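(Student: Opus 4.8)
The plan is to follow Hitchin's original flatness argument \cite[Thm. 4.9]{hitchin:1990} step by step, translating each analytic ingredient into its cohomological counterpart. The connection $\nabla$ on $\pi_\ast L$ arises from the unique projective heat operator $D$ of Theorem \ref{vgdj}; its curvature $F(\theta,\theta')$ is, a priori, a section of $\pi_\ast\diffone_\cM(L)/\cO_S$ over $S$, and projective flatness means it vanishes. First I would unwind the definition: since $\nabla_\theta s = D(\pi^{-1}\theta)(s)$, the curvature is governed by the commutator $[\widetilde D(\pi^\ast\theta),\widetilde D(\pi^\ast\theta')] - \widetilde D(\pi^\ast[\theta,\theta'])$ inside $\pi_\ast\mathcal{W}_{\cM/S}(L)/\cO_S$, which by construction has vanishing subprincipal symbol $\sigma_S$, so it lands in $\pi_\ast\difftwo_{\cM/S}(L)/\cO_S$. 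Its image under $\sigma_2$ is the Poisson bracket $\{\rho(\theta),\rho(\theta')\}_{T^\ast_{\cM/S}}$ (this is the standard fact that the second-order symbol of a commutator of second-order operators is the Poisson bracket of their symbols — here one needs $p\neq 2,3$ so that the symbol calculus up to order $3$ behaves as in characteristic zero, cf. the discussion in the introduction). Hypothesis (a) then forces the curvature to lie in $\pi_\ast\diffone_{\cM/S}(L)/\cO_S$.

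Next I would use hypotheses (b) and (c) to kill this residual term. Condition (c), $\pi_\ast T_{\cM/S}=0$, combined with the long exact sequence of (\ref{ses_W}) and the argument already run in the proof of Theorem \ref{vgdj}, gives $\pi_\ast\diffone_{\cM/S}(L)=\cO_S$; hence $\pi_\ast\diffone_{\cM/S}(L)/\cO_S = 0$ and the curvature vanishes outright, \emph{provided} one has first checked that the curvature really does descend to a section of $\pi_\ast\diffone_{\cM/S}(L)/\cO_S$ rather than of $\pi_\ast\diffone_\cM(L)/\cO_S$ — i.e.\ that it is $\pi^{-1}\cO_S$-linear along the fibres. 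That verticality is exactly what the vanishing of the subprincipal symbol of the commutator delivers. The role of (b), injectivity of $\mu_L$: Hitchin's argument shows that the $\Sym^2$-symbol identity $\{\rho(\theta),\rho(\theta')\}=0$ alone does not immediately control the first-order part; one must compare it through the connecting maps, and injectivity of $\mu_L$ together with condition \ref{vgdj-one} of Theorem \ref{vgdj} (the identity $\kappa_{\cM/S}+\mu_L\circ\rho=0$) pins down the relevant class in $R^1\pi_\ast\diffone_{\cM/S}(L)$ so that the curvature class is genuinely zero in the quotient. Concretely I would reproduce the diagram chase: write the curvature two-form as a \v Cech cocycle using a cover trivialising $\pi$, identify its class under $R^1\pi_\ast(\sigma_1)$ with the bracket of Kodaira–Spencer-type classes, and invoke the injectivity of $\mu_L$ to conclude it is exact.

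The step I expect to be the main obstacle is the symbol-of-a-commutator computation in positive characteristic: one must verify that for $D,D'\in\mathcal{W}_{\cM/S}(L)$ the third-order symbol of $[D,D']$ vanishes and the second-order symbol equals $\{\sigma_2(D),\sigma_2(D')\}$, using only the divided-power differential operators of \cite[1.1.3]{beilinson.schechtman:1988} and the fact (noted in the introduction) that up to order $p-1$ these agree with the naive $\diffl_{\cM/S}(L)$. For $p\geq 5$ orders $\leq 3$ are safely below $p-1$, so the classical identities carry over, but for $p=3$ third-order symbols are already problematic — which is precisely why the statement excludes characteristic $3$. I would spell this out as a short lemma on the symbol calculus, after which the rest of the proof is the verbatim translation advertised: bracket-commuting symbols $\Rightarrow$ vertical curvature $\Rightarrow$ (via (b), (c) and Theorem \ref{vgdj}) vanishing curvature.
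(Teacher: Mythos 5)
Your overall skeleton is the right one (it is Hitchin's, and the paper's): use the symbol calculus on the commutator, invoke (a) to kill the top symbol, check $\pi^{-1}\cO_S$-linearity via the subprincipal symbol to see the remainder is vertical, and then kill the residual vertical operator. But there is a genuine off-by-one error in the symbol calculus that derails the second half. The commutator $[D(\theta),D(\theta')]$ of two elements of $\mathcal{W}_{\cM/S}(L)$ is a priori of order \emph{three}, lying in $\pi_\ast\left(\diffthree_{\cM/S}(L)+\difftwo_{\cM}(L)\right)/\cO_S$, and the relevant identity is $\sigma_3([D,D'])=\{\sigma_2(D),\sigma_2(D')\}_{T^\ast_{\cM/S}}$: the Poisson bracket of two fibrewise-quadratic functions is cubic, so it is the \emph{third}-order symbol of the commutator, not the second-order one as you assert (and its vanishing is not automatic -- it is exactly what hypothesis (a) provides). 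Consequently (a) only brings the curvature operator down to a vertical operator of order two, i.e.\ into $\pi_\ast\difftwo_{\cM/S}(L)/\cO_S$, not into $\pi_\ast\diffone_{\cM/S}(L)/\cO_S$ as you claim.

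This matters because the residual second-order term is precisely where (b) and (c) are used, and your proposal never disposes of it. The paper's step is short: apply $\pi_\ast$ to the sequence (\ref{sesnoext}); since $\mu_L$ is by definition the connecting homomorphism of that sequence, its injectivity (hypothesis (b)) together with $\pi_\ast T_{\cM/S}=0$ (hypothesis (c)) traps $\pi_\ast\left(\difftwo_{\cM/S}(L)/\cO_{\cM}\right)$ between zero and zero, so the curvature vanishes. Your substitute for this -- a \v{C}ech diagram chase in $R^1\pi_\ast\diffone_{\cM/S}(L)$ invoking condition \ref{vgdj-one} of Theorem \ref{vgdj} -- is left unexecuted and is not the mechanism at work; indeed, on your (incorrect) accounting the curvature would already die in $\pi_\ast\diffone_{\cM/S}(L)/\cO_S=0$, which follows from the hypotheses of Theorem \ref{vgdj} alone and would render (b) and (c) superfluous -- a clear sign the degree bookkeeping has gone astray. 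Your explanation of the exclusion of characteristic $3$ (third-order symbols require $k\le p-1$) is correct and matches the paper's remark.
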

\begin{remark} In the statement and the proof of this theorem we use the fact that the natural morphism
  \[
    \pi_\ast \Sym^k T_{\cM/S} \to \pi_\ast\OO_{T^\ast_{\cM/S}}
  \]
  is an isomorphism of Poisson-algebras onto the weight $k$ part under the natural $\GG_m$-action for $k \leq p-1$; here, the Poisson structure on the left is the one inherited from the commutator bracket on operators of order at most $k$, and the one on the right is the natural one on the cotangent bundle.
\end{remark}
\begin{proof}
  As the connection is defined by projective heat operators (\ref{conn-heat-op}), its flatness is equivalent to the vanishing of the operator
    \begin{equation}\label{comm-flatness}
     [D(\theta),D(\theta')]-D([\theta,\theta']) \in \pi_{e\ast} \left( \diffthree_{\cM/S}(\LL^k)+\difftwo_{\cM}(\LL^k) \right) \big/ \cO_S .
    \end{equation}
    Now it follows from the preceding remark and condition (a) that
    \[
      \sigma_3([D(\theta),D(\theta')]) = \left\{ \sigma_2(D(\theta)),\sigma_2(D(\theta')) \right\}_{T^\ast_{\cM/S}} = \left\{ \rho(\theta),\rho(\theta') \right\}_{T^\ast_{\cM/S}} =  0.
    \]
    Therefore, the operator (\ref{comm-flatness}) is actually at most second order, and we furthermore claim that it really acts only along the fibers of $\cM \rightarrow S$,
    \[
      [D(\theta),D(\theta')]-D([\theta,\theta']) \in \pi_{e\ast} \left( \difftwo_{\cM/S}(\LL^k) \right) \big/ \cO_S .
    \]
    This happens for the same reason the curvature $[\nabla_X,\nabla_Y]-\nabla_{[X,Y]}$ of a connection is of degree zero as a differential operator: one checks (using the subprincipal symbol (\ref{subprincipal})) that (\ref{comm-flatness}) is $\pi^{-1}\cO_S$-linear.
    
    Now we look at the short exact sequence (\ref{sesnoext}), and apply $\pi_\ast$.  As $\mu_L$ is injective by condition (b) and there are no vertical vector fields by (c), we get 
    $$\pi_\ast\difftwo_{\cM/S}(L)\Big/\cO_S \cong \pi_\ast T_{\cM/S} = 0,$$ 
    thus concluding the proof.
\end{proof}

\subsection{The map $\mu_{L}$ }
Finally, we need to get a better understanding of the map $\mu_{L}$ from (\ref{mu-L}), for which we could simply refer to \cite[Cor. 2.4.6]{beilinson.bernstein:1993}. As the proof is not too complicated and uses only a fraction of the machinery of that paper, we thought it worthwile to include it here. We thank an anonymous referee for pointing out considerable simplifications to our previous proof. 
\begin{proposition}\label{thm_mu_O}
In the context outlined above (with $\pi:\cM\rightarrow S$ is a smooth morphism of smooth schemes, and $L$ a line bundle on $\cM$), 
we can write the connecting homomorphism (\ref{mu-L}) as \begin{equation*}
 \mu_{L}= \cup [L] + \cup\left(-\frac{1}{2} [K_{\cM/S}]\right),\end{equation*} where $K_{\cM/S}$ is the relative canonical bundle of $\pi:\cM\rightarrow S$.
\end{proposition}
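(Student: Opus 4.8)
My plan is to compute the connecting homomorphism $\mu_L$ of (\ref{sesnoext}) by hand, producing a \v{C}ech representative of $\mu_L(\phi)$ for a local section $\phi$ of $\pi_*\Sym^2 T_{\cM/S}$ and recognising in it the \v{C}ech cocycles of the Atiyah classes $[L]$ and $[K_{\cM/S}]$. Since the statement is an equality of $\cO_S$-linear maps, I may assume $S$ affine and choose an affine open cover $\{U_\alpha\}$ of $\cM$ on which $L$ is trivialised by $e_\alpha$ and $K_{\cM/S}$ is trivialised by a local volume form $\nu_\alpha$ (say arising from \'etale coordinates for $\cM/S$); on overlaps I write $e_\alpha = g_{\alpha\beta}e_\beta$ and $\nu_\alpha = h_{\alpha\beta}\nu_\beta$, so that $\{d\log g_{\alpha\beta}\}$ and $\{d\log h_{\alpha\beta}\}$ represent $[L]$ and $[K_{\cM/S}]$ in $R^1\pi_*\Omega^1_{\cM/S}$. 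On such a cover, $\mu_L(\phi)$ is the class of the \v{C}ech coboundary of any system of local lifts of $\phi$ to $\difftwo_{\cM/S}(L)/\cO_\cM$, the coboundary automatically taking values in the subsheaf $T_{\cM/S}$.

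The key point is to use a \emph{canonical} local lift. Recall the formal adjoint of differential operators: for a local generator $\nu$ of $K_{\cM/S}$ and $D\in\difftwo_{\cM/S}(\cO_\cM)(U)$ there is a well-defined $D^{*_\nu}\in\difftwo_{\cM/S}(\cO_\cM)(U)$ with $\sigma_2(D^{*_\nu})=\sigma_2(D)$, $\sigma_1(D^{*_\nu})=-\sigma_1(D)$, and $D^{*_{h\nu}}=h^{-1}\circ D^{*_\nu}\circ h$ for $h\in\cO_\cM^\times(U)$; it is given by an explicit formula and is unproblematic in positive characteristic for operators of order $\leq 2$. Since $p\neq 2$, for $\phi\in\Sym^2 T_{\cM/S}(U)$ the element $\Lambda_\nu(\phi):=\tfrac12(D+D^{*_\nu})\bmod\cO_\cM$, with $D$ any lift of $\phi$, is a well-defined section of $\difftwo_{\cM/S}(\cO_\cM)/\cO_\cM$ over $U$ with $\sigma_2(\Lambda_\nu(\phi))=\phi$, depending only on $\nu$ and $\phi$. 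Transporting $\Lambda_{\nu_\alpha}(\phi)$ along the trivialisation $e_\alpha$ gives a local lift $\Lambda_\alpha$ of $\phi$ in $(\difftwo_{\cM/S}(L)/\cO_\cM)(U_\alpha)$, so that $\mu_L(\phi)$ is represented by $\{\sigma_1(\Lambda_\beta-\Lambda_\alpha)\}$.

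It then remains to compute $\sigma_1(\Lambda_\beta-\Lambda_\alpha)$ on $U_{\alpha\beta}$. Written in the $e_\alpha$-trivialisation, $\Lambda_\beta$ becomes the conjugate of $\Lambda_{\nu_\beta}(\phi)$ by $g_{\alpha\beta}$, while $\Lambda_{\nu_\alpha}(\phi)-\Lambda_{\nu_\beta}(\phi)=\tfrac12(D^{*_{\nu_\alpha}}-D^{*_{\nu_\beta}})$ with $D^{*_{\nu_\beta}}$ the conjugate of $D^{*_{\nu_\alpha}}$ by $h_{\alpha\beta}$. The only genuine input is the elementary identity, immediate from the order-two Leibniz rule $D(fgs)=\langle\sigma_2(D),df\otimes dg\rangle s+fD(gs)+gD(fs)-fgD(s)$: for $D$ with $\sigma_2(D)=\phi$ and a unit $u$ (acting by multiplication),
\[
\sigma_1\bigl(u\,D\,u^{-1}-D\bigr)=-\,\iota_{d\log u}\,\phi ,
\]
where $\iota\colon\Sym^2 T_{\cM/S}\otimes\Omega^1_{\cM/S}\to T_{\cM/S}$ is the contraction underlying the maps ``$\cup[L]$'' in the statement. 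Assembling the two conjugations, $\sigma_1(\Lambda_\beta-\Lambda_\alpha)=\iota_{d\log g_{\alpha\beta}}\phi-\tfrac12\,\iota_{d\log h_{\alpha\beta}}\phi$; passing to cohomology and using the representatives of $[L]$ and $[K_{\cM/S}]$ above gives $\mu_L=\cup[L]-\tfrac12\cup[K_{\cM/S}]=\cup[L]+\cup\bigl(-\tfrac12[K_{\cM/S}]\bigr)$.

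The main obstacle will be bookkeeping rather than ideas: keeping the signs and the factor $\tfrac12$ consistent across the conventions for the Atiyah class, the \v{C}ech coboundary, the symbol $\sigma_2$, and the formal adjoint (it is precisely the sign of the $\tfrac12[K_{\cM/S}]$-term that is delicate, as it records that $\nu_\alpha=h_{\alpha\beta}\nu_\beta$ with $h_{\alpha\beta}$ the transition function of $K_{\cM/S}$, not of $K_{\cM/S}^{-1}$). Conceptually the things to check are the basic properties of the formal adjoint in characteristic $p$ for order-$\leq 2$ operators, and the displayed identity; everything else is formal. One could instead package the case $L=\cO_\cM$ more invariantly: the adjoint involution gives a canonical splitting of $\difftwo_{\cM/S}(L_0)/\cO_\cM$ whenever $L_0^{\otimes 2}\cong K_{\cM/S}$, hence $\mu_{L_0}=0$ for a relative theta characteristic $L_0$, and the general case follows from the additivity $\mu_L-\mu_{L'}=\cup([L]-[L'])$ together with the \'etale-local existence of such an $L_0$; the direct local computation above, however, avoids theta characteristics entirely.
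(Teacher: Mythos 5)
Your proposal is correct, but it takes a genuinely different route from the paper. The paper's proof is global and formal: it quotes Welters' identity $\mu_L=\cup[L]+\mu_{\cO_\cM}$ and then shows $\mu_L=-\mu_{L^{-1}\otimes K_{\cM/S}}$ by exhibiting a $\pi^{-1}\cO_S$-linear isomorphism of the order-$\leq 2$ symbol sequences for $L$ and $L^{-1}\otimes K_{\cM/S}$ induced by the adjoint $D\mapsto D^\circ$ (which acts as $-\id$ on $T_{\cM/S}$ after pushing out along $\sigma_1$); the coefficient $-\tfrac12$ then falls out by linear algebra on classes, with no local computation and no \v{C}ech sign bookkeeping. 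You instead compute $\mu_L(\phi)$ directly as a \v{C}ech coboundary of the canonical symmetrized lifts $\tfrac12(D+D^{*_{\nu_\alpha}})$, transported by trivialisations of $L$, and extract both the $[L]$- and $[K_{\cM/S}]$-contributions in one stroke via the conjugation identity $\sigma_1(uDu^{-1}-D)=-\iota_{d\log u}\,\sigma_2(D)$; I checked that this yields $\iota_{d\log g_{\alpha\beta}}\phi-\tfrac12\iota_{d\log h_{\alpha\beta}}\phi$ as you claim, so the argument goes through. What your route buys is self-containedness (the $g_{\alpha\beta}$-conjugation part is in effect a proof of Welters' identity rather than an appeal to it) and explicitness; what the paper's route buys is freedom from trivialisations and from the sign/convention bookkeeping that you correctly identify as the only real hazard — to nail it down you should also record that, with the same \v{C}ech conventions, the connecting homomorphism of the first-order sequence for $L$ is cup-and-contract with $\{d\log g_{\alpha\beta}\}$, which pins the absolute signs. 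Both arguments ultimately rest on the same mechanism (the formal adjoint of order-$\leq 2$ operators, hence $p\neq 2$), and your closing remark that $\mu_{L_0}=0$ for a relative theta characteristic is precisely the paper's adjoint symmetry in disguise. One small imprecision: for a genuinely second-order $D$ the statement ``$\sigma_1(D^{*_\nu})=-\sigma_1(D)$'' is not meaningful (only $\sigma_2$ is defined, and the first-order coefficients of the adjoint pick up derivatives of the second-order ones); what you actually use — that $D_1+D_1^{*_\nu}$ has order zero for $D_1$ of order $\leq 1$, so $\tfrac12(D+D^{*_\nu})$ is well defined modulo $\cO_\cM$, together with the conjugation identity — is correct, so nothing breaks.
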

Note that `half' of this statement ( $\mu_{L}=\cup[L]+\mu_{\mathcal{O}_{\cM}}$) appears in \cite[Lemma 1.16]{welters:1983}, except that Welters uses the extension class of the sheaf of principal parts $\mathcal{P}^{(1)}(L)$ of order $\leq 1$ instead of $\diffone_{\cM/S}(L)$ to define $[L]$ and hence has a minus sign on the right-hand side.  
In a K\"ahler context, with $L$ a polarizing line bundle, the statement of Proposition \ref{thm_mu_O} 
is implied in \cite[p. 364]{hitchin:1990}. In the general complex analytic setting, a Dolbeault-theoretic approach is descibed in \cite[Appendix A.2]{boer:2008}\footnote{The formulas in \cite{beilinson.bernstein:1993} and \cite{boer:2008} are more general expressions that both specialise to the one given in Proposition \ref{thm_mu_O}, but appear different from each other in general.}.
\begin{proof}
  The proof follows from the identification of the opposite of the algebra of differential operators on $L$ with that of $L^{-1}\otimes K_{\cM/S}$ via the adjoint differential operator $D^\circ$, as discussed for example in \cite[1.1.5.(iv)]{beilinson.schechtman:1988}.
  Due to the identity $\mu_{L}=\cup[L]+\mu_{\mathcal{O}_{\cM}}$ observed already by Welters (in arbitrary characteristic), it suffices to show that
  \begin{equation}\label{muadj}
    \mu_{L} = -\mu_{L^{-1}\otimes K}.
  \end{equation}
  For this, consider the adjoint map between sheaves of differential operators $\cA_{\cM/S}(E) \ni D \mapsto D^\circ \in \cA_{\cM/S}(E^\ast \otimes K_{\cM/S})$ defined by the identity
  \[
     \langle e, D^\circ e^\circ \rangle = \langle De, e^\circ \rangle - \LL_{\sigma_1 D}\langle e, e^\circ \rangle ,
  \]
  where $e$ and $e^\circ$ are arbitrary local sections of $E$ and $E^\circ := E^\ast \otimes K_{\cM/S}$, respectively, and $\LL$ is the Lie derivative on the relative canonical bundle. It is straightforward to verify that $D^\circ$ has symbol $\sigma_1(D^\circ) = - \sigma_1(D)$, and that for any regular local function $\phi$
  \[
      (\phi D)^\circ = \phi D^\circ - \langle \sigma_1 D, d_{\cM/S}\phi \rangle ,
  \]
  so that $D\mapsto D^\circ$ is in particular $\pi^{-1}\cO_S$-linear. This zeroth-order deviation from $\cO_{\cM}$-linearity may appear inconvenient at first sight, but it actually permits to extend the adjoint to second order operators, as
  \[
    (\phi D_2)^\circ \circ D_1^\circ = D_2^\circ \circ (\phi D_1)^\circ + (\langle \sigma_1 D_1,d\phi \rangle D_2)^\circ .
  \]
  In this way we obtain a $\pi^{-1}\cO_S$-linear isomorphism of short-exact sequences
  \[
  \begin{tikzcd}[row sep=small]
    0 \ar[r] & \diffone_{\cM/S}(L) \ar[r] \ar[d, swap, "{D \mapsto D^\circ}"] & \difftwo_{\cM/S}(L) \ar[r, "\sigma_2"] \ar[d, swap, "{D \mapsto D^\circ}"] & \Sym^2 T_{\cM/S} \ar[r] \ar[d, "\id"] & 0 \\
    0 \ar[r] & \diffone_{\cM/S}(L^{-1} \otimes K_{\cM/S}) \ar[r] & \difftwo_{\cM/S}(L^{-1} \otimes K_{\cM/S}) \ar[r, "\sigma_2"] & \Sym^2 T_{\cM/S} \ar[r] & 0 ,
  \end{tikzcd}
  \]
  whose push-out along $\sigma_1$ gives
  \[
  \begin{tikzcd}[row sep=small]
    0 \ar[r] & T_{\cM/S} \ar[r] \ar[d, swap, "-\id"] & \difftwo_{\cM/S}(L)/\cO_{\cM} \ar[r, "\sigma_2"] \ar[d, swap, "{D \mapsto D^\circ}"] & \Sym^2 T_{\cM/S} \ar[r] \ar[d, "\id"] & 0 \\
    0 \ar[r] & T_{\cM/S} \ar[r] & \difftwo_{\cM/S}(L^{-1} \otimes K_{\cM/S})/\cO_{\cM} \ar[r, "\sigma_2"] & \Sym^2 T_{\cM/S} \ar[r] & 0 ,
  \end{tikzcd}
  \]
  which proves the necessary identity (\ref{muadj}).
\end{proof}

\begin{remark}
Note that the preceding result remains true in characteristic $p>0$ with $p\neq 2$, since we only use the isomorphism induced by $D \mapsto D^\circ$ between differential operators of order $\leq 2$.
\end{remark}

\section{An algebro-geometric approach to the Hitchin connection for non-abelian theta-functions}\label{mainconstruction}
In this section we construct the Hitchin connection in algebraic geometry.  We want to invoke Theorem \ref{vgdj}, using the symbol $\rho$ from (\ref{rho}) on page \pageref{rho}.  In order to verify that this theorem applies, we need to begin by examining the various ingredients of condition \ref{vgdj-one}.

Note that, compared to the situation of families of abelian varieties (cfr. \cite{welters:1983}, \cite[\S2.3.8]{vangeemen.dejong:1998}), we need a much more detailed knowledge of our candidate symbol, in order to establish flatness of the connection later on (which is done via other means for abelian varieties). 
\subsection{Basic facts about the moduli space of bundles}\label{sect_basicfacts}
At this point we can turn our attention to the particular context we are interested in: the moduli theory of bundles on curves.  In the rest of Section \ref{mainconstruction}, we shall denote by $\pi_s:\calC\rightarrow S$ a smooth family of smooth projective curves of genus $g\geq 2$.  This gives rise, for any integer $r\geq 2$ to a (coarse) relative moduli space of stable bundles of rank $r$ with trivial determinant over the same base, which we shall denote by $\pi_e:\cM\rightarrow S$.  If $g=2$ we will assume that $r\geq 3$.  We shall denote the fibered product by the diagram 
\[\begin{tikzcd}
\calC\times_S \cM\ar[d, "\pi_w"']\ar[r, "\pi_n"] & \cM\ar[d, "\pi_e"] \\ \calC\ar[r, "\pi_s"'] & S
\end{tikzcd}\]
and will simply put $$\pi_c=\pi_e\circ \pi_n=\pi_s\circ \pi_w.$$
Unfortunately $\cM$ is only a coarse moduli space, and a universal bundle over $\calC\times_S \cM$ does not exist (one could argue that it exists over the stack of stable bundles $\mathfrak{M}\rightarrow S$, but does not descend to $\cM$).  Nevertheless, one can speak both of the Atiyah algebroid and Atiyah sequence of the virtual bundle (since these do descend to the coarse moduli space). There exists a unique line bundle $\LL$ over $\cM$, called 
the \it theta line bundle\rm, which is mapped to the relatively ample generator of the relative Picard variety $\Pic(\cM/S)$ (see \cite{drezet.narasimhan:1989,hoffmann:2012}).
In order to avoid making our notations heavier than needed, we shall henceforth pretend a universal bundle $\cE\rightarrow \calC\times_S\cM$ exist. 
Note that this universal bundle is only unique up to tensor product with a line bundle coming from $\cM$. However the trace-free endomorphism bundle $\cE nd^0(\cE)$ is unique.
Similarly the determinant-of-cohomology line bundle on $\cM$ associated to a universal bundle $\cE$, defined as in \cite{MK}
\[
  \lambda(\cE) := \det R^\bullet \pi_{n \ast} (\cE) ,
\]
will depend on the choice of the universal bundle $\cE$. We will use two well-known properties when considering vector bundles with trivial determinant.
\begin{itemize}
\item For any universal bundle $\cE$ and any line bundle $\zeta$ on $\cC \to S$ of degree $g-1$, we have the equality
\cite{drezet.narasimhan:1989,hoffmann:2012}
\begin{equation}\label{thetadet1}
\cL^{-1}=\lambda (\cE \otimes \pi_w^*\zeta).
\end{equation}
\item For any universal bundle $\cE$, we have the equalities \cite{LS}
\begin{equation}\label{thetadet2}
\cL^{-2r} = K_{\cM/S} =  \lambda(\cE nd^0(\cE)).
\end{equation}
\end{itemize}
At various places we shall use the trace pairing $$\begin{tikzcd} \operatorname{Tr}:\End^0(\cE)\times \End^0(\cE)\ar[r]& \cO_{\calC\times_S\cM}\end{tikzcd}$$ to identify $\End^0(\cE)$ with its dual $\End^0(\cE)^*$.

We will need a few other standard facts about the moduli space $\cM$ as well:
\begin{proposition}\label{basicfacts}
We have 
\begin{enumerate}[(a)]
\item $\pi_{n*}\End^0(\cE)=\{0\}$, 
\item $T_{\cM/S}=R^1\pi_{n*}\End^0(\cE)$, 
\item\label{basicfactsthree} $\pi_{e*}T_{\cM/S}=\{0\}$,
 \item\label{basicfactsfour} $R^1\pi_{e*}\cO_{\cM}=\{0\}$.
 \end{enumerate}
 \end{proposition}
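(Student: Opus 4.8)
I would split the four assertions into two groups: (1) and (2) are pointwise facts about stable bundles that globalise by cohomology and base change, whereas (3) and (4) are global facts about the geometry of $\cM$ that I would mostly import from the literature on moduli of bundles. For (1): a stable bundle $E$ on a smooth projective curve is simple, so $H^0(\calC_s,\End(E))=\Bbbk\cdot\Id_E$; since $p\nmid r$ the trace splits $\End(E)=\End^0(E)\oplus\cO$ with $\Id_E$ in the second summand, whence $H^0(\calC_s,\End^0(E))=0$ for every $[E]\in\cM$. As $\dim H^0$ along the fibres of $\pi_n$ is constantly zero, cohomology and base change gives $\pi_{n\ast}\End^0(\cE)=0$. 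For (2): the universal family $\cE$ on $\calC\times_S\cM\xrightarrow{\pi_n}\cM$ carries an Atiyah (Kodaira--Spencer) class, yielding a map $T_{\cM/S}\to R^1\pi_{n\ast}\End(\cE)$ that classifies first-order deformations of $\cE$ along $\cM$. Each fibre of $\cE$ has trivial determinant, so $\det\cE$ is pulled back from $\cM$ and hence has vanishing relative Atiyah class along $\pi_n$; as $\tr$ of the Atiyah class of $\cE$ equals the Atiyah class of $\det\cE$, the map factors (again using $p\nmid r$) through the summand $R^1\pi_{n\ast}\End^0(\cE)$. That the resulting map is an isomorphism is exactly the statement that $\cM$ is, at stable points, a (coarse) local moduli space for bundles with fixed determinant: I would check it fibrewise, identifying the fibre of $R^1\pi_{n\ast}\End^0(\cE)$ with the unobstructed deformation space $H^1(\calC_s,\End^0(E))$ via (1) and base change, observing that both sides are locally free of rank $(r^2-1)(g-1)$ by Riemann--Roch, and using the versality of $\cE$ to see the Kodaira--Spencer map is surjective, hence bijective.

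For (3) and (4): cohomology and base change reduces both to absolute statements about the moduli space $\cM_C$ of stable rank-$r$ bundles with trivial determinant on a single curve $C$ of genus $g\ge 2$ (with $r\ge 3$ if $g=2$), namely $H^0(\cM_C,T_{\cM_C})=0$ and $H^1(\cM_C,\cO_{\cM_C})=0$; both are classical over $\CC$ (the first the absence of infinitesimal automorphisms, after Narasimhan--Ramanan; the second the vanishing of $H^1$ of this unirational variety). The key geometric point for passing to arbitrary characteristic is that the strictly semistable locus has codimension $2(r-1)(g-1)-1$ in the projective moduli space $\cM_C^{\semis}$ --- a direct dimension count on the Jordan--H\"older strata, the minimum being attained on the stratum of bundles possessing a degree-$0$ line subbundle --- and this is $\ge 3$ precisely when $(r,g)\neq(2,2)$. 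Since $\cM_C^{\semis}$ is normal and Cohen--Macaulay, $\cM_C$ and $\cM_C^{\semis}$ then share the same $H^i(\cO)$ for $i\le 1$, so (4) follows from $H^1(\cM_C^{\semis},\cO)=0$, which is part of the analysis of $\Pic(\cM/S)$ in \cite{drezet.narasimhan:1989,hoffmann:2012} and holds in positive characteristic; for (3) one likewise reduces to the absence of global vector fields on $\cM_C^{\semis}$ (equivalently the triviality of $\mathrm{Lie}\,\mathrm{Aut}(\cM_C^{\semis})$), which I would take from the literature in the generality required.

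The main obstacle is (3) and (4): statements (1) and (2) are formal once simplicity of stable bundles and unobstructedness of their deformations are at hand, whereas (3) and (4) encode genuine global geometry --- the absence of global vector fields on $\cM_C$ and the vanishing of $H^1(\cO_{\cM_C})$ --- and the only delicate matters are to have these classical facts available in positive characteristic (for $H^1(\cO)$ this is covered by the characteristic-free references above) and the codimension bound on the strictly semistable locus, which both legitimises the reduction to the projective moduli space and is the perennial source of the exclusion of $(r,g)=(2,2)$.
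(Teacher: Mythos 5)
Your treatment of (1) and (2) matches the paper, which simply invokes basic deformation theory for these two items: simplicity of stable bundles plus the trace splitting (using $p\nmid r$) gives (1), and the relative Kodaira--Spencer/versality argument gives (2). The problem lies entirely in how you handle (3) and (4). These are exactly the two statements the paper singles out as ``well known but without references in the generality needed outside the complex case'', and your proposal defers them back to that missing literature. Concretely: the absence of global vector fields on $\cM_C$ (Narasimhan--Ramanan) is a characteristic-zero result, and saying you would ``take it from the literature in the generality required'' is circular -- in positive characteristic no such reference is available. For (4), neither of your two routes works in characteristic $p$: unirationality does not imply $H^1(\cO)=0$ there, and the computation $\Pic(\cM/S)\cong\ZZ$ of \cite{drezet.narasimhan:1989,hoffmann:2012} does not yield $H^1(\cM_C,\cO)=0$ either, because $H^1(\cO)$ is the tangent space of the Picard scheme, which may be non-reduced in characteristic $p$; a discrete Picard group is compatible with $H^1(\cO)\neq 0$. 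So the two genuinely global vanishing statements are left unproved precisely in the setting the proposition is needed for.

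What the paper does instead (Appendix \ref{appendixbasicfacts}) is prove (3) and (4) in arbitrary characteristic via the Hitchin system. For (3): a global vertical vector field gives a function on $T^*_{\cM/S}$ of weight $1$ for the $\GG_m$-action; by Hartogs (using the codimension of the complement of $\cM$ in $\cM^{\higgs,\semis}$, which is where the exclusion of $(r,g)=(2,2)$ enters, much as in your codimension count) it extends to the Higgs moduli space, where every such function is pulled back from the Hitchin base $\HH$; but $\HH$ has only weights $2,\dots,r$, so the function vanishes. For (4): using abelianisation of the Hitchin fibration and the fact that cupping with an ample class is an isomorphism $H^0(X,T_X)\to H^1(X,\cO_X)$ on (semi-)abelian fibres, one builds an equivariant isomorphism $\Psi:\pi_{\HH*}\cO_{\HH}\otimes\HH^*\to R^1\pi_{\higgs*}\cO$ shifting weights by $-1$; a class in $R^1\pi_{e*}\cO_{\cM}$ would be a weight-$0$ section, hence correspond to a weight-$(-1)$ section on the other side, of which there are none. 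If you want your proposal to stand, you must either supply characteristic-free proofs of the fibrewise vanishings $H^0(\cM_C,T_{\cM_C})=0$ and $H^1(\cM_C,\cO)=0$ along these (or other) lines, or find genuinely characteristic-free references, which the authors state they could not.
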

The first two of these follow from basic deformation theory.  For the last two, which are also well-known, we include a proof (due to Hitchin) using the Hitchin system in Appendix \ref{appendixbasicfacts}.
\subsection{The Kodaira-Spencer Map}
Our aim in this section is to give a description of the map $$\begin{tikzcd} \Phi: R^1\pi_{s*}T_{\calC/S}\ar[r]& R^1\pi_{e*}T_{\cM/S}\end{tikzcd}$$ (relating deformations of the curve to deformations of the moduli space) which makes the diagram of sheaves on $S$ 
\begin{equation}\label{kappaphi}
\begin{tikzcd}[row sep=-0.5ex, column sep=large]
& R^1\pi_{s*}T_{\calC/S} \ar[dd, "\Phi"] \\ T_S \ar[ur, pos=0.6, "\kappa_{\calC/S}"]\ar[dr, pos=0.7, "\kappa_{\cM/S}" '] & \\ & R^1\pi_{e*}T_{\cM/S}\\
\end{tikzcd}\end{equation}
commute, where $\kappa_{\calC/S}$ and $\kappa_{\cM/S}$ are the Kodaira-Spencer maps, as in (\ref{ks}).  This is a line of reasoning that essentially goes back to Narasimhan and Ramanan \cite{narasimhan.ramanan:1970}.  

On $\calC\times_S\cM$ we have the trace-free relative Atiyah sequence 
\begin{equation}\label{relatseq}\begin{tikzcd} 0\ar[r] & \End^0(\cE) \ar[r] & \atalg^0_{\calC\times_S\cM\big/\cM}(\cE) \ar[r] & T_{\calC\times_S \cM \big/ \cM} \ar[r] & 0.\end{tikzcd}\end{equation}
As we have that $\pi_{n*}\left(T_{C\times_S \cM \big/ \cM}\right)=0$ and $R^2\pi_{n*}\End^0(\cE)=0$, applying $R^1\pi_{n*}$ gives the short exact sequence on $\cM$ \begin{equation}\label{fromsernesi}\begin{tikzcd} 0\ar[r] & R^1\pi_{n*}\End^0(\cE)\ar[r] & R^1\pi_{n*}\atalg^0_{\calC\times_S\cM\big/\cM}(\cE)\ar[r] & R^1\pi_{n*}T_{\calC\times_S \cM\big/\cM}\ar[r] & 0 . \end{tikzcd}\end{equation}

In order to describe the Kodaira-Spencer map $\kappa_{\cM/S}$, we need to start from the short exact sequence $$\begin{tikzcd} 0\ar[r] & T_{\cM/S} \ar[r] & T_{\cM}\ar[r] & \pi_e^*T_S\ar[r] & 0,\\ \end{tikzcd}$$ which is given (see e.g. \cite[\S 3.3.3]{sernesi:2006} for the case of a line bundle -- vector bundles are a straightforward generalisation of the description there, and are discussed in \cite[\S 2.3]{martinengo:2009}) by the pullback of (\ref{fromsernesi}) along the map \begin{equation*}\begin{tikzcd}\pi_e^*\kappa_{\calC/S}:\pi_{e}^*T_S\ar[r]& R^1\pi_{n*}T_{\calC\times_S \cM\big/\cM}\cong \pi_e^*\left(R^1\pi_{s*} T_{\calC/S} \right).\end{tikzcd}\end{equation*}

If we apply $\pi_{e*}$ to this, we obtain finally
\begin{lemma}\label{constrphi} The Kodaira-Spencer map $\kappa_{\cM/S}$ is given by the composition of  $\kappa_{\calC/S}$ with $\Phi$, the connecting homomorphism of (\ref{fromsernesi}): 
\[\begin{tikzcd}[column sep=large, row sep=0ex]
& R^1\pi_{s*}T_{\calC/S}\cong \pi_{e*}\big( R^1\pi_{n*}T_{\calC\times_S \cM\big/ \cM}\big) \ar[dd, shorten=-1ex, "\Phi"]\\ 
T_S\ar[ur, end anchor={[xshift=-4em,yshift=1ex]south}, pos=0.5, "\kappa_{\calC/S}"] \ar[dr, end anchor={[xshift=-4em]north}, pos=0.5, "\kappa_{\cM/S}" '] & \\ 
& R^1\pi_{e*}T_{\cM/S}\cong R^1\pi_{e*}\left(R^1\pi_{n*}\End^0(\cE)\right).
\end{tikzcd}\]
\end{lemma}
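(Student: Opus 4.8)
The plan is to unwind the functorial construction already assembled in the paragraphs preceding the statement. The key observation is that the short exact sequence $0\to T_{\cM/S}\to T_{\cM}\to \pi_e^*T_S\to 0$ has been exhibited as the pullback, along $\pi_e^*\kappa_{\calC/S}\colon \pi_e^*T_S\to R^1\pi_{n*}T_{\calC\times_S\cM/\cM}$, of the sequence~(\ref{fromsernesi}). The Kodaira–Spencer map $\kappa_{\cM/S}$ is by definition the connecting homomorphism of this first sequence after applying $\pi_{e*}$. So the first step is to record the commutative diagram of sheaves on $\cM$ relating the pullback sequence to~(\ref{fromsernesi}): the map of short exact sequences induced by $\pi_e^*\kappa_{\calC/S}$, which is the identity on the sub-object $T_{\cM/S}=R^1\pi_{n*}\End^0(\cE)$ (using Proposition~\ref{basicfacts}(2)) and on the quotient is $\pi_e^*\kappa_{\calC/S}$ followed by the identification $R^1\pi_{n*}T_{\calC\times_S\cM/\cM}\cong\pi_e^*(R^1\pi_{s*}T_{\calC/S})$ (base change, valid since the relative curve is of relative dimension one and $\pi_n$ is smooth and proper).

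The second step is naturality of connecting homomorphisms. A morphism of short exact sequences induces a commuting square between the associated long exact sequences after pushforward; applying $\pi_{e*}$ to the map of sequences from Step~1 yields
\[
\begin{tikzcd}
\pi_{e*}\pi_e^*T_S \ar[r,"\delta"] \ar[d] & R^1\pi_{e*}T_{\cM/S} \ar[d,equal] \\
\pi_{e*}R^1\pi_{n*}T_{\calC\times_S\cM/\cM} \ar[r,"\Phi"] & R^1\pi_{e*}R^1\pi_{n*}\End^0(\cE),
\end{tikzcd}
\]
where the bottom connecting map is exactly $\Phi$ as defined by~(\ref{fromsernesi}) and the right-hand vertical is the identification of Proposition~\ref{basicfacts}(2). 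Precomposing the top row with the unit $T_S\to\pi_{e*}\pi_e^*T_S$, the composite $T_S\to\pi_{e*}\pi_e^*T_S\xrightarrow{\delta}R^1\pi_{e*}T_{\cM/S}$ is $\kappa_{\cM/S}$ by construction, while the left vertical composed with the unit is precisely $\kappa_{\calC/S}\colon T_S\to R^1\pi_{s*}T_{\calC/S}\cong\pi_{e*}R^1\pi_{n*}T_{\calC\times_S\cM/\cM}$ (this is the defining property of the pullback sequence together with compatibility of Kodaira–Spencer with base change $S\leftarrow\cM$). Chasing the diagram gives $\kappa_{\cM/S}=\Phi\circ\kappa_{\calC/S}$, which is the assertion.

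The main obstacle, and the only place requiring genuine care rather than diagram-chasing, is the bookkeeping of the base-change isomorphisms: one must check that the identification $R^1\pi_{n*}T_{\calC\times_S\cM/\cM}\cong\pi_e^*(R^1\pi_{s*}T_{\calC/S})$ is compatible with $\pi_{e*}$ and the natural map $\pi_{e*}\pi_e^*T_S\leftarrow T_S$ in such a way that the left vertical arrow really does recover $\kappa_{\calC/S}$ and not some twist of it, and likewise that the cohomology-and-base-change identifications $T_{\cM/S}\cong R^1\pi_{n*}\End^0(\cE)$ and $R^1\pi_{e*}T_{\cM/S}\cong R^1\pi_{e*}R^1\pi_{n*}\End^0(\cE)$ are the ones implicitly used in the statement of the lemma. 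All of these are standard (they are exactly the identifications used in Proposition~\ref{basicfacts} and in the construction preceding the lemma), so once the diagram in Step~1 is written out correctly the result follows formally. I would also note that $R^2\pi_{n*}\End^0(\cE)=0$ and $\pi_{n*}T_{\calC\times_S\cM/\cM}=0$, already used to obtain~(\ref{fromsernesi}), are what guarantee the relevant sequences are short exact so that the connecting maps behave as claimed.
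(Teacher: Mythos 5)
Your proposal is correct and follows essentially the same route as the paper: the paper's argument is precisely that the relative tangent sequence of $\cM/S$ is the pullback of (\ref{fromsernesi}) along $\pi_e^*\kappa_{\calC/S}$ (cited from Sernesi/Martinengo) and that applying $\pi_{e*}$ and using naturality of connecting homomorphisms gives $\kappa_{\cM/S}=\Phi\circ\kappa_{\calC/S}$. Your write-up simply makes explicit the diagram chase, the unit of adjunction, and the base-change identifications that the paper leaves implicit.
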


\subsection{The Hitchin Symbol}
We have already briefly encountered the Hitchin symbol in (\ref{rho}), we shall clarify the precise definition here in the appropriate relative setting.
We start from the quadratic part of the Hitchin system, relative over $S$, and its associated symmetric bilinear form (temporarily denoted $B$)
\[
  \begin{tikzcd}[column sep=tiny, row sep=small]
    T_{\cM/S}^\ast \arrow[rr, "\operatorname{diag}"] \arrow[rd] && T_{\cM/S}^\ast \otimes T_{\cM/S}^\ast \arrow[ld, "B"]  \\
    & \pi_{n*}K^{\otimes 2}_{\calC\times_S\cM\big/\cM} &
  \end{tikzcd} 
\]
Recall that the bilinear form $B$ is, in the explicit description of the relative cotangent bundle via Higgs fields $T^\ast_{\cM /S} = {\pi_n}_\ast ( \End^0(\cE)\otimes K_{\calC \times_S \cM \big/ \cM} )$, given by the trace
\[
  B(\phi,\psi) = \tr (\phi \circ \psi) .
\]
In particular, it factors further through the symmetric square $\Sym^2 T_{\cM/S}^\ast$. Notice as well that since we assume the characteristic of the base field to be different from 2, the symmetric square is canonically identified with the symmetric 2-tensors, and in particular there is also a canonical identification
\[
  \left( \Sym^2 T_{\cM/S}^\ast \right)^\ast \cong \Sym^2 T_{\cM/S} .
\]
Taking the dual $B^\ast$ of $B$, using Serre duality relative to $\pi_n$ on the domain (where in particular $K_{\calC \times_S \cM / \cM} = \pi_w^\ast K_{\calC / S}$), and pushing down via ${\pi_e}_\ast$ we obtain a map ${\pi_e}_\ast \left( B^\ast \right)$
\[
\begin{tikzcd}
  {\pi_e}_\ast R^1 {\pi_n}_\ast \pi_w^\ast T_{\calC / S}\ar[r, "{\pi_e}_\ast B^\ast"]& {\pi_e}_\ast \Sym^2 T_{\cM / S}.
  \end{tikzcd}
\]
Combining this with flat base change
\[
  R^1 {\pi_n}_\ast \pi_w^\ast T_{\calC / S} \cong \pi_e^\ast R^1 {\pi_s}_\ast T_{\calC / S} ,
\]
we make the following definition.
\begin{definition}\label{hitchinsymbol} The Hitchin symbol $\rho^{\Hit}$ is defined as
\[
\begin{tikzcd}
\rho^{\Hit} := {\pi_e}_\ast \left( B^\ast \right) :
R^1 {\pi_s}_\ast T_{\calC / S} \ar[r]& {\pi_e}_\ast \Sym^2 T_{\cM / S} .
\end{tikzcd}
\]
\end{definition}
The morphism $\rho^{\Hit}$ is in fact an isomorphism.  As we do not need this fact directly, we have relegated it to the Appendix, see Lemma \ref{rho-Hit-isom}.

For our purpose of comparing the symbol map with the Kodaira--Spencer morphism in the general context of Theorem \ref{vgdj}, we need the following alternative description: consider first the surjective evaluation map on $\calC \times_S \cM$:
\begin{equation}\label{eval-end}
\begin{tikzcd}\pi_n^*\pi_{n*}(\End^0(\cE)\otimes \pi_w^*K_{\calC/S}) \ar[r,"\ev"]& \End^0(\cE)\otimes \pi_w^*K_{\calC/S}.
\end{tikzcd}
\end{equation}
Dualizing (\ref{eval-end}) we get a morphism
\[
\begin{tikzcd}
\End^0(\cE)^*\otimes \pi_w^*T_{\calC/S} \ar[r]& \pi_n^*\left(\pi_{n*}\left(\End^0(\cE)\otimes \pi_w^*K_{\calC/S}\right)\right)^*
\end{tikzcd}
\]
so that swapping the first tensor factor and composing with relative Serre duality for $\pi_n$ we obtain a $\mathcal{O}_{\calC\times_S \cM}$-linear morphism
\begin{equation}\label{eval-dual}
\begin{tikzcd}
 \pi_w^*T_{\calC/S} \ar[r,"\ev^\ast"] & \End^0(\cE)\otimes\pi_n^*(R^1\pi_{n*}(\End^0(\cE)^*)).
 \end{tikzcd}
\end{equation}
We also use the trace pairing to identify  $\operatorname{Tr}: \End^0(\cE)\overset{\cong}{\to} \End^0(\cE)^*$.
Now we apply ${\pi_e}_\ast \circ R^1{\pi_n}_\ast$ to  
(\ref{eval-dual}) and, by the isomorphism $R^1\pi_{n*}\End^0(\cE)^*\cong R^1\pi_{n*}\End^0(\cE) \cong T_{\MM/S}$, the projection formula and base change, we obtain a map
\begin{equation}\label{pre-symbol}
\begin{tikzcd}
R^1\pi_{s*}(T_{\calC/S}) \ar[r]& \pi_{e*}\left(T_{\cM/S}\otimes T_{\cM/S}\right).
\end{tikzcd}
\end{equation}
\begin{lemma}\label{defhitchinsymbol}
The map (\ref{pre-symbol}) coincides with the Hitchin symbol \ref{hitchinsymbol}.
\end{lemma}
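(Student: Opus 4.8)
The plan is to unwind both constructions until each appears as $\pi_{e\ast}$ of an $\cO_\cM$-linear map which is, in turn, obtained by applying $R^1\pi_{n\ast}$ — together with cohomology and base change for $\pi_n$, the projection formula, and the relative Serre trace $\Res\colon R^1\pi_{n\ast}K_{\calC\times_S\cM/\cM}\to\cO_\cM$ — to an explicit $\cO_{\calC\times_S\cM}$-linear morphism. The lemma then becomes the assertion that these two morphisms on $\calC\times_S\cM$ agree, which is a formal consequence of the compatibility of relative Serre duality with $\Sym^2$ together with the fact that the evaluation map, its dual, and the identification $\End^0(\cE)\cong\End^0(\cE)^\ast$ are all built from the single trace pairing that defines the form underlying $\rho^{\Hit}$.

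First I would re-express the symmetric form $B$ through the evaluation map (\ref{eval-end}). Writing $\mathcal{F}=T^\ast_{\cM/S}=\pi_{n\ast}(\End^0(\cE)\otimes\pi_w^\ast K_{\calC/S})$ and $K_w:=\pi_w^\ast K_{\calC/S}=K_{\calC\times_S\cM/\cM}$, the map $B\colon\Sym^2\mathcal{F}\to\pi_{n\ast}K_w^{\otimes 2}$ corresponds, under the $(\pi_n^\ast,\pi_{n\ast})$-adjunction and the canonical splitting $\Sym^2(\End^0(\cE)\otimes K_w)=\Sym^2\End^0(\cE)\otimes K_w^{\otimes 2}$, to the composite $(\tr\otimes\id_{K_w^{\otimes 2}})\circ\Sym^2(\ev)\colon\Sym^2\pi_n^\ast\mathcal{F}\to K_w^{\otimes 2}$, where $\tr\colon\Sym^2\End^0(\cE)\to\cO$ is the quadratic form attached to the trace pairing. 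Next I would use the formal principle that the transpose, under relative Serre duality for $\pi_n$, of a morphism of the shape $\pi_{n\ast}(g)$ with $g\colon\pi_n^\ast\mathcal{H}\to\mathcal{G}$ a map of locally free sheaves, is $(\id\otimes\Res)$ composed with the projection-formula isomorphism and with $R^1\pi_{n\ast}(g^\vee\otimes\id_{K_w})\colon R^1\pi_{n\ast}(\mathcal{G}^\vee\otimes K_w)\to R^1\pi_{n\ast}(\pi_n^\ast\mathcal{H}^\vee\otimes K_w)$. Applying this with $g=(\tr\otimes\id)\circ\Sym^2(\ev)$, and invoking $(\pi_{n\ast}K_w^{\otimes 2})^\ast\cong R^1\pi_{n\ast}\pi_w^\ast T_{\calC/S}$, $\mathcal{F}^\vee\cong R^1\pi_{n\ast}\End^0(\cE)^\ast\cong T_{\cM/S}$ (the last via the trace pairing), flat base change $R^1\pi_{n\ast}\pi_w^\ast T_{\calC/S}\cong\pi_e^\ast R^1\pi_{s\ast}T_{\calC/S}$, and $\pi_{e\ast}\pi_e^\ast=\id$, puts $\rho^{\Hit}$ into the desired shape.

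On the other side, (\ref{pre-symbol}) is by construction $\pi_{e\ast}$ of $R^1\pi_{n\ast}(\ev^\ast)$, and (\ref{eval-dual}) exhibits $\ev^\ast\colon\pi_w^\ast T_{\calC/S}\to\End^0(\cE)\otimes\pi_n^\ast R^1\pi_{n\ast}\End^0(\cE)^\ast$ as the dual of $\ev$, rewritten after moving the $\End^0(\cE)$-factor across the $\Hom$ and invoking relative Serre duality $\mathcal{F}^\vee\cong R^1\pi_{n\ast}\End^0(\cE)^\ast$; the passage to (\ref{pre-symbol}) then again uses the projection formula, base change, and the trace-pairing identification. Comparing the two sides, the whole statement reduces to the identity on $\calC\times_S\cM$ that the Serre dual of $\Sym^2(\ev)$ postcomposed with the trace quadratic form equals the second symmetric power of the Serre dual of $\ev$ postcomposed with the trace pairing (on the appropriate $K_w$-twists). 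This holds because, for any morphism $h\colon\pi_n^\ast\mathcal{F}\to\End^0(\cE)\otimes K_w$, the Serre dual of $\Sym^2 h$ is canonically $\Sym^2$ of the Serre dual of $h$ up to the evident $K_w$-twists, and because the trace pairing that defines $B$ is literally the same pairing used to dualize $\ev$ and to identify $\End^0(\cE)\cong\End^0(\cE)^\ast$; postcomposition with $\Res$ and the projection-formula isomorphisms is manifestly compatible with all of this. Since the characteristic is different from $2$, $\Sym^2$ is a direct summand of $\otimes^2$, so this identification is exact and in particular shows that (\ref{pre-symbol}) factors through $\pi_{e\ast}\Sym^2 T_{\cM/S}$, where it coincides with $\rho^{\Hit}$.

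I expect the main obstacle to be bookkeeping rather than ideas: keeping the several invocations of relative Serre duality for $\pi_n$ mutually consistent — their compatibility with base change over $S$, with the projection formula, and with $\Sym^2$ — and arranging the trace-pairing identification $\End^0(\cE)\cong\End^0(\cE)^\ast$, which enters once ``squared'' in $B$ but once ``in each tensor factor'' in (\ref{pre-symbol}), so that the $K_w=\pi_w^\ast K_{\calC/S}$ twists cancel in exactly the same way in the two computations. No single step is deep, but the web of canonical identifications must be tracked carefully; a possibly lighter alternative is to verify the equality after the base change $\operatorname{Spec}\kappa(s)\to S$ for each closed point $s\in S$, where — using cohomology and base change for the $H^1$ of a curve — it collapses to the linear-algebra statement about the evaluation map $H^0(C,\End^0(E)\otimes K_C)\otimes\cO_C\to\End^0(E)\otimes K_C$ on the fixed curve $C=\calC_s$ and its stable bundles $E$.
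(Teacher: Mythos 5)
Your argument is correct and follows essentially the same route as the paper: both sides are compared by dualizing under relative Serre duality for $\pi_n$ and observing that $B$, the dualization of $\ev$, and the identification $\End^0(\cE)\cong\End^0(\cE)^\ast$ are all built from the single trace pairing. The paper just packages this more compactly, checking commutativity of one triangle with arrows $R^1\pi_{n\ast}(\ev^\ast)$, $B^\ast$ and $\id\otimes(R^1\pi_{n\ast}\operatorname{Tr}^{-1})^\ast$ by dualizing it, via $\left(R^1\pi_{n\ast}(\ev^\ast)\right)^\ast=\pi_{n\ast}(\ev\otimes\id)$ and the fact that the natural pairing on $\End^0(\cE)^\ast\otimes\End^0(\cE)$ equals $B\circ(\operatorname{Tr}^{-1}\otimes\id)$.
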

\begin{proof}
The claimed identity follows from commutativity of the diagram
\[
\begin{tikzcd}[column sep=tiny]
  & R^1 {\pi_n}_\ast \pi_w^\ast T_{\calC /S} \arrow[ld, swap, "R^1{\pi_n}_\ast(\ev^\ast)"] \arrow[rd, "B^\ast"] & \\
  R^1 {\pi_n}_\ast \End^0(\cE) \otimes R^1 {\pi_n}_\ast \left( \End^0(\cE)^\ast \right) \arrow[rr, "\id \otimes (R^1{\pi_n}_\ast \operatorname{Tr}^{-1})^\ast"] & & T_{\cM /S} \otimes T_{\cM /S}.
\end{tikzcd} 
\]
This follows if we in turn dualize, apply Serre duality, for which
\[
\left( R^1 {\pi_n}_\ast (\ev^\ast ) \right)^\ast = {\pi_n}_\ast \left( \ev \otimes \id \right),
\]
(and similarly for the other arrow, where additionally $\operatorname{Tr} = \operatorname{Tr}^\ast$), and observe that the natural pairing on $\End^0(\cE)^\ast \otimes \End^0(\cE)$ coincides with $B \circ (\operatorname{Tr}^{-1} \otimes \id)$ by the definition of $B$ and $\operatorname{Tr}$.
\end{proof}
\subsection{The theta line bundle and its Atiyah algebroid}
Next we need some observations about the Atiyah algebroid of the theta line bundle $\LL$ (see Sect. \ref{sect_basicfacts}).
We recall that $\LL$ is mapped to
the ample generator of $\Pic(\cM/S)$ and that $\LL$ is related to the determinant-of-cohomology line bundle as in (\ref{thetadet1}) and 
(\ref{thetadet2}).
  
  In this setting, the Atiyah sequence for $\LL$ relative to $S$ has a remarkably direct description in terms of the Atiyah sequence of the trace-free relative Atiyah algebroid of $\cE$,
\begin{equation}\label{at-alg}
\begin{tikzcd}
0\ar[r]& \End^0(\cE) \ar[r]& \atalg^0_{\calC\times_S\cM\big/\cM}(\cE) \ar[r]& \pi_w^*T_{\calC/S}\cong T_{\calC\times_S\cM\big/\cM} \ar[r]& 0.
\end{tikzcd}
\end{equation}
Note that, since $\cE nd^0(\cE)$ is uniquely defined, also is $\atalg^0_{\calC\times_S\cM\big/\cM}(\cE)$.
Indeed, we have
\begin{theorem}\label{maintracecompl} The relative Atiyah sequence of the theta line bundle $\cL$ is isomorphic to the first direct image $R^1 \pi_{n \ast}$ of the dual of (\ref{at-alg}):
  \begin{equation}\label{dualR1}
\begin{tikzcd}[column sep=small, row sep=small]
  0 \ar[r]& R^1\pi_{n*}(K_{\cX/\cM})\cong \cO_\cM \ar[r] \ar[d, " \Id_{\cO_M}" swap] & R^1\pi_{n*}\left(\atalg^0_{\cX/\cM}(\cE)^\ast\right) \ar[r] \ar[d, "\cong"] & R^1\pi_{n*}\left(\End^0(\cE)^*\right) \ar[d, "\cong"] \ar[r]& 0 \\
  0 \ar[r]& \cO_\cM \ar[r] & \atalg_{\cM/S}(\LL) \ar[r, "\sigma_1"] & T_{\cM/S} \ar[r]& 0 .
\end{tikzcd}
\end{equation}
\end{theorem}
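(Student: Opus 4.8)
The plan is to recognize the extension on the top of \eqref{dualR1} as the image under $R^1\pi_{n\ast}$ of the dualized relative Atiyah sequence \eqref{at-alg}, and then identify this with the Atiyah sequence of $\cL$ by using the two determinant-of-cohomology formulas \eqref{thetadet1}–\eqref{thetadet2} together with functoriality of the Atiyah (first Chern) class under pushforward by determinant-of-cohomology. First I would dualize \eqref{at-alg} and tensor with nothing (noting $\End^0(\cE)^\ast\cong\End^0(\cE)$ via $\operatorname{Tr}$, and that $(\pi_w^\ast T_{\calC/S})^\ast = \pi_w^\ast K_{\calC/S}=K_{\cX/\cM}$), to obtain a short exact sequence on $\cX=\calC\times_S\cM$ whose terms fit Serre duality relative to $\pi_n$. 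Since $\pi_n$ has one-dimensional fibres and $\pi_{n\ast}\End^0(\cE)=0$ while $\pi_{n\ast}K_{\cX/\cM}\cong\cO_\cM$ (Proposition \ref{basicfacts}, resp.\ relative Serre duality applied to $\pi_{n\ast}\cO=\cO$), applying $R^\bullet\pi_{n\ast}$ to this dual sequence yields exactly the top row of \eqref{dualR1}: the $R^1$-terms assemble into a short exact sequence, with $R^1\pi_{n\ast}K_{\cX/\cM}\cong\cO_\cM$, $R^1\pi_{n\ast}(\End^0(\cE)^\ast)\cong T_{\cM/S}$ (Proposition \ref{basicfacts}(2) via $\operatorname{Tr}$), and the middle term $R^1\pi_{n\ast}(\atalg^0_{\cX/\cM}(\cE)^\ast)$.

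The heart of the argument is to show that this extension class equals the Atiyah class $[\cL]$ of the theta line bundle, i.e.\ that the connecting homomorphism $T_{\cM/S}\to R^1\pi_{e\ast}(\text{something})\cong\dots$ of the top row agrees with the one coming from $\diffone_{\cM/S}(\cL)$. The key input is that the determinant-of-cohomology construction $\lambda$ sends a relative Atiyah sequence of a bundle $\cF$ on $\cX$ to the relative Atiyah sequence of $\lambda(\cF)$ on $\cM$ (compatibility of the Atiyah/Chern class with pushforward along $R\pi_{n\ast}$, which follows from Grothendieck--Riemann--Roch / the Deligne pairing formalism, or can be established directly from the trace-complex description developed in Appendix \ref{appendixtracecomplex}). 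Applying this to the Atiyah sequence \eqref{at-alg} of $\cE$ — or rather, to the trace-free version — and using $\lambda(\End^0(\cE))=K_{\cM/S}=\cL^{-2r}$ from \eqref{thetadet2}, together with Lemma \ref{extens} to rescale $\cL^{-2r}$ back to $\cL$, should produce precisely the bottom row of \eqref{dualR1} with the correct connecting map. Alternatively, one uses \eqref{thetadet1}: $\cL^{-1}=\lambda(\cE\otimes\pi_w^\ast\zeta)$, and the Atiyah algebroid of $\cE\otimes\pi_w^\ast\zeta$ differs from that of $\cE$ only by a twist that is pulled back from $\calC$, hence invisible after $R^1\pi_{n\ast}$ on the trace-free part.

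I expect the main obstacle to be the careful bookkeeping of dualities and signs: one must dualize \eqref{at-alg}, apply relative Serre duality for $\pi_n$ (which introduces $K_{\cX/\cM}$ and a cohomological degree shift), and check that the resulting connecting homomorphism matches the Atiyah class of $\cL$ \emph{on the nose}, including the identification $R^1\pi_{n\ast}K_{\cX/\cM}\cong\cO_\cM$ and the factor coming from the rank (the $2r$ in \eqref{thetadet2} versus the single copy of $\cL$). This is exactly the kind of compatibility that the trace-complex machinery of Beilinson--Schechtman and Bloch--Esnault is designed to make explicit and functorial, so the cleanest route — and presumably the one the authors take — is to phrase the dual of \eqref{at-alg} directly as (a variant of) the trace complex whose $R^1\pi_{n\ast}$ computes the Atiyah algebroid of $\lambda$, and invoke the appendix; the remaining work is then to match this general statement with the specific line bundle $\cL$ using \eqref{thetadet1} or \eqref{thetadet2} and Lemma \ref{extens}. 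The vanishing statements of Proposition \ref{basicfacts} are what guarantee the $R^\bullet\pi_{n\ast}$ of the dual sequence degenerates to a single short exact sequence in degree $1$, with no correction terms in degrees $0$ or $2$.
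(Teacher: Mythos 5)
Your setup paragraph is correct and agrees with the paper: dualizing (\ref{at-alg}), using $\operatorname{Tr}$ and $(\pi_w^\ast T_{\calC/S})^\ast\cong K_{\cX/\cM}$, and invoking Proposition \ref{basicfacts} and relative duality does produce the top row of (\ref{dualR1}), and you have also correctly spotted the external ingredients (the trace complex of the appendix, the relation $\lambda(\End^0(\cE))=\cL^{-2r}$ from (\ref{thetadet2}), and the rescaling of Lemma \ref{extens} to pass from $\cL^{-2r}$ to $\cL$). The gap is at what you yourself call the heart. You appeal to a functoriality principle ``$\lambda$ sends the relative Atiyah sequence of $\cF$ on $\cX$ to the relative Atiyah sequence of $\lambda(\cF)$ on $\cM$,'' but that is not what the trace-complex machinery gives: Theorem \ref{easyBS} identifies $\cA_{\cM/S}(\lambda(\cF))$ with the hypercohomology $R^0\pi_{n\ast}\cB^\bullet(\cF)$ of a two-term complex attached to $\cF$, not with $R^1\pi_{n\ast}$ of the dual of the Atiyah sequence of $\cF$. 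Worse, the bundle to which it must be applied is $\End^0(\cE)$ (so that $\lambda$ is $\cL^{-2r}$), while the sequence in the statement is built from $\cE$ itself. Bridging these two sides is the actual content of the paper's proof and is missing from your sketch: one needs (i) the residue-pairing duality $\cA^0_{\cX/\cM}(\cE)^\ast\cong{}^0\cB^{-1}(\cE)$ (Theorem \ref{thmdualityB}, proved by an explicit local residue computation), and (ii) the map $\phi\colon R^1\pi_{n\ast}{}^0\cB^{-1}(\cE)\to R^0\pi_{n\ast}\cB^\bullet(\End^0(\cE))$ of Proposition \ref{isodirimage}, whose construction rests on the vanishings $\pi_{n\ast}{}^0\cB^0(\cE)=0$ and of $R^1\pi_{n\ast}\End^0(\cE)\to R^1\pi_{n\ast}{}^0\cB^0(\cE)$ (Lemma \ref{tecfacts}) and on the lifted adjoint maps $\widehat{\ad}$, $\widetilde{\ad}$ with their splitting (Proposition \ref{ST310} in Appendix \ref{appendixsplitting}). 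It is precisely this construction that produces the factor $2r\cdot\Id$ on $\cO_\cM$ which your Lemma \ref{extens} rescaling must cancel, and the sign $-\operatorname{Tr}$ which the final flip $\cA(\cL^{-1})\cong\cA(\cL)$ absorbs; without it, the claim that the extension class of the top row ``is'' $[\cL]$ is unsubstantiated.

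Your alternative route via (\ref{thetadet1}) does not repair this: applying Theorem \ref{easyBS} to $\cE\otimes\pi_w^\ast\zeta$ yields $\cA_{\cM/S}(\cL^{-1})$ as $R^0\pi_{n\ast}\cB^\bullet(\cE\otimes\pi_w^\ast\zeta)$, but one would still have to compare that hypercohomology (built from the full, not trace-free, endomorphisms of the twisted bundle) with $R^1\pi_{n\ast}\bigl(\atalg^0_{\cX/\cM}(\cE)^\ast\bigr)$, and the remark that the twist by $\pi_w^\ast\zeta$ is ``invisible after pushforward'' supplies no such comparison. So the architecture of your proposal is right, but the central identification is asserted rather than proved; supplying it is essentially Section \ref{sectionbigproof} together with Appendices \ref{appendixtracecomplex} and \ref{appendixsplitting} of the paper.
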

For a single fixed curve, this result was stated (without proof) in the announcement \cite{ginzburg:1995} (see Theorem 9.1), where it is attributed to Beilinson and Schechtman (even though it does not seem to appear in \cite{beilinson.schechtman:1988}); it can also be derived from results contained in \cite{sun.tsai:2004}.
We give an independent proof in Section \ref{sectionbigproof}.
\subsection{A comment on extensions of line bundles}
Let $X$ be a  scheme, $V$ and $L$ respectively a vector and a line bundle on $X$. Let moreover $F$ be an extension of $L$ by $V$
\[
\begin{tikzcd} 0\ar[r] & V \ar[r,"i"] & F \ar[r, "\pi"] & L \ar[r] & 0.\end{tikzcd}
\]
 By taking the dual and tensoring with $V\otimes L$ we get 
\[
\begin{tikzcd}
0 \ar[r] & V \ar[r] & F^* \otimes V \otimes L \ar[r] & \ar[r] V^*\otimes V\otimes L \ar[r] &0.
\end{tikzcd}\]
Consider now the injective natural map
\begin{eqnarray*}
\psi: L & \to & V^*\otimes V\otimes L \\
\ell & \mapsto & \Id_V \otimes \ell.
\end{eqnarray*}
\begin{lemma}\label{VBremark}
There exists a canonical 
injection $\phi:F\hookrightarrow F^*\otimes V \otimes L$ so that the diagram 
\begin{equation}\label{ext2}
\begin{tikzcd}[row sep=small]
0 \ar[r] & V \ar[d, equal] \ar[r, "i"] & F \ar[d] \ar[d, hookrightarrow, "\phi"] \ar[r, "-\pi "] & L \ar[d, " \psi ", hookrightarrow] \ar[r] & 0 \\
0 \ar[r] & V \ar[r] & F^*\otimes V \otimes L \ar[r] & V^* \otimes V\otimes L \ar[r] & 0
\end{tikzcd} \end{equation}
commutes.
\end{lemma}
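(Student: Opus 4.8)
The plan is to produce $\phi$ from a canonical alternating pairing on $F$ with values in $V\otimes L$. Concretely, I would define the $\cO_X$-bilinear map
\[
  \beta\colon F\otimes_{\cO_X} F \longrightarrow F\otimes L, \qquad \beta(f_1,f_2) = f_1\otimes\pi(f_2) - f_2\otimes\pi(f_1),
\]
and first check that it factors through $V\otimes L = (i\otimes\id_L)(V\otimes L)\subset F\otimes L$. Since tensoring $(\ref{extension})$ with the line bundle $L$ keeps it exact, it is enough to see that $(\pi\otimes\id_L)\circ\beta=0$, which is immediate: $\pi(f_1)\otimes\pi(f_2)=\pi(f_2)\otimes\pi(f_1)$ in $L\otimes L$, because the canonical involution of $L\otimes L$ is the identity for a line bundle. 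As $\beta$ is visibly alternating, $\phi(f):=\beta(f,-)$ is a well-defined section of $\mathcal{H}om(F,V\otimes L)=F^{*}\otimes V\otimes L$, and $f\mapsto\phi(f)$ is the asserted $\cO_X$-linear map; it involves no choices. (For motivation one may note that $\beta$ is, up to the natural projection $F\otimes F\to\wedge^2 F$, simply the surjection $\wedge^2 F\to V\otimes L$ coming from the two-step filtration $0\subset\wedge^2 V\subset\wedge^2 F$ induced by $(\ref{extension})$.)

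Next I would verify that $(\ref{ext2})$ commutes, by two short local computations. For the left square, restricting along $i$ gives, for a local section $v$ of $V$, $\phi(i(v))(g)=i(v)\otimes\pi(g)-g\otimes\pi(i(v))=i(v)\otimes\pi(g)$, which under the isomorphism $V\otimes L\xrightarrow{\sim}(i\otimes\id_L)(V\otimes L)$ is the homomorphism $g\mapsto v\otimes\pi(g)$; unwinding the identification $L^{*}\otimes L\cong\cO_X$ used to write the left-hand term of $(\ref{dual})$ as $V$, this is exactly the image of $v$ under the inclusion $V=L^{*}\otimes V\otimes L\xrightarrow{\pi^{*}\otimes\id}F^{*}\otimes V\otimes L$, so $\phi\circ i$ is the bottom inclusion. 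For the right square, the lower horizontal map is $i^{*}\otimes\id_{V\otimes L}$, i.e. precomposition with $i$, so it sends $\phi(f)$ to $v\mapsto\phi(f)(i(v))=-i(v)\otimes\pi(f)=-v\otimes\pi(f)$, that is to $-\Id_V\otimes\pi(f)$; on the other side $\psi(-\pi(f))=\Id_V\otimes(-\pi(f))$, and the two agree.

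Finally, injectivity of $\phi$ follows from the five lemma applied to $(\ref{ext2})$: the left vertical map is an isomorphism and $\psi$ is injective by hypothesis. (Directly: $\phi(f)=0$ forces $\psi(-\pi(f))=0$, hence $\pi(f)=0$ and $f=i(v)$, and then $\phi(i(v))=0$ forces $v=0$ since $\phi\circ i$ is the split inclusion of $V$.) The only part that needs genuine care is the bookkeeping of signs and of the canonical identifications — $L^{*}\otimes L\cong\cO_X$, $\mathcal{H}om(F,V\otimes L)\cong F^{*}\otimes V\otimes L$, and the symmetry of $L\otimes L$ — and in particular the fact that it is the alternating nature of $\beta$ that forces the map in the top row of $(\ref{ext2})$ to be $-\pi$ rather than $\pi$.
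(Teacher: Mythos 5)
Your construction is exactly the paper's: you define the same alternating map $\beta(f_1,f_2)=f_1\otimes\pi(f_2)-f_2\otimes\pi(f_1)$ (the paper's $\alpha$), note its image lies in $V\otimes L$, and let $\phi(f)=\beta(f,-)$ viewed in $F^*\otimes V\otimes L$, then check the two squares; your verification (including the sign forcing $-\pi$ in the top row and the injectivity remark) correctly fills in what the paper leaves as a straightforward check. So the proposal is correct and takes essentially the same route as the paper.
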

\begin{proof}
  We consider the natural $\cO_X$-linear map $\alpha:F \otimes F \to F\otimes L$ defined by
  \[
   \alpha(f_1\otimes f_2) = f_1 \otimes \pi(f_2) - f_2 \otimes \pi(f_1)
  \]
  for local sections $f_1,f_2$ of $F$. Then it is easy to check that the image of $\alpha$ is the subbundle $V\otimes L \subset F\otimes L$. Now the map $\alpha$ naturally corresponds to an $\cO_X$-linear map $\phi: F \to F^\ast \otimes V \otimes L$, which can be described locally in  terms of a basis of local sections $\{e_i\}$ of $F$ and the dual basis $\{e_i^\ast\}$ of $F^\ast$ as
  \[
  \phi(f) = \sum_{i=1}^{\rk F} \left(e_i^\ast \otimes f \otimes \pi(e_i) - e_i^\ast \otimes e_i \otimes \pi(f) \right) .
  \]
  It is now straightforward to check that this $\phi$ makes the above diagram commute.
\end{proof}

\subsection{Locally freeness of $\pi_{e*}(\cL)$}
We will be assuming that the direct image $\pi_{e*}(\cL^k)$ on $S$ is locally free.  In characteristic zero this follows trivially from Kodaira vanishing, but in positive characteristic it is not known in general (but of course it will  always trivally be true for large enough $k$).  For $r=2$, this is however proven in \cite{mehta.ramadas:1996}.  

Note that in characteristic zero, a coherent sheaf with a flat projective connection will necessarily be locally free, but this need not be true in general.

\subsection{The relation between $\rho^{\Hit}, \Phi$, and $\LL$}
We can now state the final ingredient we will need to prove the existence of the Hitchin connection:
\begin{proposition}\label{phi-rho-L}
The sheaf morphism $\Phi$ from (\ref{kappaphi}) equals minus the composition $(\cup [\LL]) \circ\rho^{\Hit}$ of the Hitchin symbol and the characteristic class $[\LL]$, i.e. the following diagram of sheaves on $S$ commutes:
\[
\begin{tikzcd}[row sep=small]
R^1\pi_{s*} T_{\mathcal{C}/S}  \ar[rr, "-\Phi"] \ar[dr, "\rho^{\Hit} "'] & & R^1\pi_{e*}T_{\cM/S}.\\
& \pi_{e*}\Sym^2 T_{\cM/S} \ar[ur,  "\cup {[\LL]} "'] &
\end{tikzcd}
\]
\end{proposition}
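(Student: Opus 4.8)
The plan is to deduce the identity from naturality of connecting homomorphisms, feeding in the two explicit realizations at our disposal: Theorem~\ref{maintracecompl}, which presents the relative Atiyah sequence of $\LL$ (and hence the class $[\LL]$) as $R^1\pi_{n*}$ of the dual of the relative Atiyah sequence~(\ref{at-alg}) of $\cE$, and Lemma~\ref{defhitchinsymbol}, which presents $\rho^{\Hit}$ through the map $\ev^*$ of~(\ref{eval-dual}). Write $\mathcal{A}^0:=\atalg^0_{\calC\times_S\cM/\cM}(\cE)$ and let $(\star)$ denote the sequence~(\ref{at-alg}). First I would record that, by Lemma~\ref{constrphi} together with Proposition~\ref{basicfacts} (in particular $\pi_{n*}\pi_w^*T_{\calC/S}=0$ and $\pi_{e*}T_{\cM/S}=0$) and flat base change, $\Phi$ is — up to the canonical identifications — the connecting homomorphism obtained by applying $\pi_{e*}\circ R^1\pi_{n*}$ to $(\star)$. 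Next, since cupping with $[\LL]$ and contracting is the connecting homomorphism attached to the relative Atiyah sequence of $\LL$ tensored by $T_{\cM/S}$, Theorem~\ref{maintracecompl} shows that $(\cup[\LL])$ is the connecting homomorphism obtained by applying $\pi_{e*}\circ R^1\pi_{n*}$ to $(\star)^*\otimes\pi_n^*T_{\cM/S}$: indeed $R^1\pi_{n*}(\pi_w^*K_{\calC/S})\cong\cO_\cM$, $\pi_{n*}(\End^0(\cE)^*)=0$, and $R^1\pi_{n*}(\End^0(\cE)^*)\cong T_{\cM/S}$ (relative Serre duality plus the trace pairing), so that $R^1\pi_{n*}$ of $(\star)^*\otimes\pi_n^*T_{\cM/S}$ is exactly $0\to T_{\cM/S}\to\atalg_{\cM/S}(\LL)\otimes T_{\cM/S}\to T_{\cM/S}\otimes T_{\cM/S}\to 0$. (As $\rho^{\Hit}$ factors through $\Sym^2 T_{\cM/S}$, it will be immaterial which tensor slot is contracted.)

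The bridge between $(\star)$ and $(\star)^*\otimes\pi_n^*T_{\cM/S}$ will be Lemma~\ref{VBremark}, applied to $(\star)$ with $V=\End^0(\cE)$, $F=\mathcal{A}^0$ and $L=\pi_w^*T_{\calC/S}$ --- a line bundle precisely because $\pi_s$ has one-dimensional fibres. This produces a canonical morphism of short exact sequences from $(\star)$, with its surjection replaced by its negative, to $(\star)^*\otimes\End^0(\cE)\otimes\pi_w^*T_{\calC/S}$, with left vertical $\Id_{\End^0(\cE)}$ and right vertical $\psi\colon\ell\mapsto\Id_{\End^0(\cE)}\otimes\ell$. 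I would then compose this with the morphism $(\star)^*\otimes\End^0(\cE)\otimes\pi_w^*T_{\calC/S}\to(\star)^*\otimes\pi_n^*T_{\cM/S}$ obtained by tensoring $(\star)^*$ with the arrow $\End^0(\cE)\otimes\pi_w^*T_{\calC/S}\to\pi_n^*T_{\cM/S}$ that is dual (via the trace pairing and relative Serre duality for $\pi_n$) to the evaluation~(\ref{eval-end}), arriving at a morphism of short exact sequences on $\calC\times_S\cM$
\[
\begin{tikzcd}[column sep=small]
0\ar[r]&\End^0(\cE)\ar[r]\ar[d,"\lambda"]&\mathcal{A}^0\ar[r,"-\sigma_1"]\ar[d]&\pi_w^*T_{\calC/S}\ar[r]\ar[d,"\mu"]&0\\
0\ar[r]&\pi_w^*K_{\calC/S}\otimes\pi_n^*T_{\cM/S}\ar[r]&(\mathcal{A}^0)^*\otimes\pi_n^*T_{\cM/S}\ar[r]&\End^0(\cE)^*\otimes\pi_n^*T_{\cM/S}\ar[r]&0,
\end{tikzcd}
\]
whose bottom row is $(\star)^*\otimes\pi_n^*T_{\cM/S}$.

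Finally I would identify the two outer verticals after applying $R^1\pi_{n*}$: the triangle identity for the relative Serre-duality adjunction should give $R^1\pi_{n*}\lambda=\Id_{T_{\cM/S}}$, while unwinding the definitions of $\psi$ and of the dual of the evaluation should show that $R^1\pi_{n*}\mu$, read through the trace identification $R^1\pi_{n*}(\End^0(\cE)^*)\cong T_{\cM/S}$ in the first tensor slot, is exactly $R^1\pi_{n*}(\ev^*)$ --- hence $\rho^{\Hit}$ after $\pi_{e*}$ and base change, by Lemma~\ref{defhitchinsymbol}. Applying $\pi_{e*}\circ R^1\pi_{n*}$ to the displayed morphism: the top row gives a sequence with connecting homomorphism $-\Phi$ (the minus coming from $-\sigma_1$; compare~(\ref{fromsernesi}) and Lemma~\ref{constrphi}), the bottom row gives a sequence with connecting homomorphism $\cup[\LL]$ by the first paragraph, and naturality --- together with $R^1\pi_{n*}\lambda=\Id$ and $R^1\pi_{n*}\mu=\rho^{\Hit}$ --- yields $(\cup[\LL])\circ\rho^{\Hit}=-\Phi$, the claim.

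The hard part will be the bookkeeping in this last step: pinning down the various relative-Serre-duality and trace-pairing identifications and verifying that they are mutually compatible --- in particular that $R^1\pi_{n*}\lambda$ really is the identity, and that the tautological coevaluation $\ell\mapsto\Id\otimes\ell$ of Lemma~\ref{VBremark} matches, after dualizing and invoking Serre duality, the map $\ev^*$ of~(\ref{eval-dual}) --- and tracking all signs so that precisely one minus sign survives. A secondary nuisance is that Lemma~\ref{VBremark} naturally produces the twist $\End^0(\cE)\otimes\pi_w^*T_{\calC/S}$ rather than $\pi_n^*T_{\cM/S}$; inserting the extra morphism of short exact sequences built from the evaluation before pushing down is what lets the projection formula and Theorem~\ref{maintracecompl} apply cleanly.
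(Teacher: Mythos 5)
Your proposal is correct and takes essentially the same route as the paper's proof: Lemma~\ref{VBremark} with $V=\End^0(\cE)$, $F=\atalg^0_{\calC\times_S\cM/\cM}(\cE)$, $L=\pi_w^*T_{\calC/S}$, composed with the dualized evaluation map and pushed down, with Theorem~\ref{maintracecompl} identifying the lower connecting homomorphism as $\cup[\LL]$, Lemma~\ref{defhitchinsymbol} identifying the right-hand vertical as $\rho^{\Hit}$, and Lemma~\ref{constrphi} together with the sign from~(\ref{ext2}) giving $-\Phi$ on top, exactly as in the paper. The only (harmless) difference is bookkeeping: you absorb the spectator factor into $\pi_n^*T_{\cM/S}$ upstairs via the dual evaluation, whereas the paper carries $\End^0(\cE)$ through $R^1\pi_{n*}$ and invokes the projection formula afterwards.
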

\begin{proof}
We begin with the trace-free Atiyah sequence on $\calC\times_S\cM$ for $\cE$, relative to $\pi_n$, as introduced in Section \ref{seqandcon}.  To keep the notation light, we shall denote in this proof the Atiyah algebroid $\atalg^0_{\calC\times_S \cM\big/\cM}(\cE)$ simply by $\atalg$.  By using the evaluation maps, as in (\ref{eval-end}), dualizing, and tensoring  with $\pi^*_wT_{\calC/S}\otimes \End^0(\cE)$,  we obtain the following natural map of exact sequences:
\begin{equation}\label{doubleseq}
\begin{tikzcd}[column sep=small, row sep=small] 
0 \ar[r]  & \End^0(\cE)   \ar[r]\ar[d,equal]  & \begin{array}{@{}c@{}} \End^0(\cE)\otimes \\ \atalg^*\otimes \pi_w^*T_{\calC/S} \end{array} \ar[r] \ar[d] & \begin{array}{@{}c@{}} \End^0(\cE)\ \otimes\\ \End^0(\cE)^*\otimes \pi_w^*T_{\calC/S}\end{array} \ar[r] \ar[d] & 0 \\
0 \ar[r]  &  \End^0(\cE) \ar[r] & \begin{array}{@{}c@{}} \End^0(\cE)\ \otimes\\ \pi_n^*(\pi_{n*}(\atalg\otimes \pi_w^* K_{\calC/S}))^*\end{array} \ar[r] & \begin{array}{@{}c@{}}\End^0(\cE)\ \otimes\\ \pi_n^*(\pi_{n*}(\End^0(\cE)\otimes \pi_w^*K_{\calC/S}))^*\end{array} \ar[r]  & 0.
\end{tikzcd}
\end{equation}
By relative Serre duality for $\pi_n$, the lower exact sequence is equal to the following
\begin{equation}\label{relserdual}
\begin{tikzcd}
0\ar[r] & \begin{array}{@{}c@{}}\End^0(\cE)\ \otimes \\ \pi_n^*(R^1\pi_{n*}\pi_w^*K_{\calC/S})\end{array}\ar[r] &\begin{array}{@{}c@{}} \End^0(\cE)\ \otimes\\ \pi_n^*(R^1\pi_{n*}\atalg^*)\end{array} \ar[r] & \begin{array}{@{}c@{}} \End^0(\cE)\ \otimes\\ \pi_n^*(R^1\pi_{n*}\End^0(\cE)^*)\end{array} \ar[r]& 0.
\end{tikzcd}
\end{equation}
By plugging $V=\End^0(\cE)$, $L=\pi_w^*T_{\calC/S}$ and $F=\atalg$ in Lemma \ref{VBremark}, we get a map of exact sequences
\begin{equation}\label{finalmap}
\begin{tikzcd}[row sep=small]
0 \ar[r] & \End^0(\cE) \ar[d, equal] \ar[r] &  \atalg \ar[r] \ar[d] & \pi_w^* T_{\calC/S} \ar[d] \ar[r] & 0 \\
0 \ar[r] & \End^0(\cE) \ar[r] &\begin{array}{@{}c@{}} \End^0(\cE)\ \otimes\\ \atalg^*\otimes \pi_w^*T_{\calC/S}\end{array} \ar[r] & \begin{array}{@{}c@{}} \End^0(\cE)\ \otimes\\ \End^0(\cE)^*\otimes \pi_w^*T_{\calC/S} \end{array} \ar[r] & 0. 
\end{tikzcd}
\end{equation}
Hence, by composing the short exact sequence maps (\ref{finalmap}) and (\ref{doubleseq}), and using the isomorphism of the target exact sequence with that of (\ref{relserdual}), we get a new map of exact sequences:
\begin{equation}\label{comm-at-ks}
\begin{tikzcd}[column sep=scriptsize, row sep=small]
0 \ar[r] & \End^0(\cE) \ar[d, equal] \ar[r]  & \atalg \ar[r] \ar[d] & \pi_w^*T_{\calC/S} \ar[r] \ar[d] & 0 \\
0 \ar[r] & \begin{array}{@{}c@{}} \End^0(\cE)\ \otimes\\ \pi_n^*(R^1\pi_{n*}(\pi_w^*K_{\calC/S}))\end{array} \ar[r] & \begin{array}{@{}c@{}} \End^0(\cE)\ \otimes \\ \pi_n^*(R^1\pi_{n*}\atalg^*) \end{array} \ar[r] & \begin{array}{@{}c@{}} \End^0(\cE)\ \otimes \\ \pi_n^*(R^1\pi_{n*}(\End^0(\cE)^*)) \end{array} \ar[r] & 0.
\end{tikzcd}
\end{equation}
 By taking the direct image $R^1\pi_{n*}$ of both sequences, they remain exact and we obtain the commutative diagram
\begin{equation}\label{R1comm-at-ks}
\begin{tikzcd}[row sep=small]
0 \ar[r] & R^1\pi_{n*}\End^0(\cE) \ar[d, equal] \ar[r] &  R^1\pi_{n*}\atalg \ar[r] \ar[d] & R^1\pi_{n*}\pi_w^*T_{\calC/S} \ar[d] \ar[r] & 0 \\
0 \ar[r] & R^1\pi_{n*}\End^0(\cE)\ar[r] & \begin{array}{@{}c@{}} R^1\pi_{n*}\End^0(\cE)\\ \otimes\ (R^1\pi_{n*}\atalg^*)\end{array} \ar[r] & \begin{array}{@{}c@{}}R^1\pi_{n*}\End^0(\cE)\ \otimes\\ (R^1\pi_{n*}(\End^0(\cE)^*))\end{array} \ar[r]  & 0.
\end{tikzcd}
\end{equation}
We now apply $\pi_{e*}$ to both exact sequences in (\ref{R1comm-at-ks}).   The claimed equality is proven once we consider the commutative diagram given by the connecting homomorphisms:
\begin{equation}\label{hitcupL}
\begin{tikzcd}[row sep=small]
 R^1\pi_{s*}(T_{\calC/S}) \ar[r, "-\Phi"] 
 \ar[d, swap, "{\rho^{\Hit}}"]
  & R^1\pi_{e*}(T_{\cM/S}) \ar[d, equal]  \\
  \pi_{e*}(T_{\cM/S}\otimes T_{\cM/S}) \ar[r,"{\cup [\LL]} "]  &  R^1\pi_{e*}(T_{\cM/S}). 
\end{tikzcd}
\end{equation}
Since the bottom row of (\ref{R1comm-at-ks}) is given by tensoring (\ref{dualR1}) by $R^1\pi_{n*}\End^0(\cE)$, by Theorem \ref{maintracecompl} the connecting homomorphism for the bottom row is given by the relative Atiyah class of $\LL$.  By Lemma \ref{defhitchinsymbol}, the left vertical map is given by the Hitchin symbol $\rho^{\Hit}$. Since the upper exact sequence of (\ref{comm-at-ks}) is the same as the sequence (\ref{relatseq}) but with one sign changed (as in (\ref{ext2})), by Lemma \ref{constrphi} the connecting homomorphism for the top row of (\ref{hitcupL}) is given by $-\Phi$.  
\end{proof}

\subsection{Existence and flatness of the connection}
We can now summarize the algebro-geometric construction of the Hitchin connection:
\begin{theorem}
\label{existenceconnection} 
Let $k$ be a positive integer.
Suppose a smooth family 
$\pi_{e}:\calC\rightarrow S$ of projective curves of genus $g\geq 2$ (and $g\geq 3$ if $r=2$) is given as before, defined over an algebraically closed field of characteristic different from $2$, not dividing $r$ and $k+r$, and such that $\pi_{e*}(\LL^k)$ is locally free. Then there exists a unique projective connection on the vector bundle $\pi_{e*}(\LL^{k})$ of non-abelian theta functions of level $k$, 
induced by a heat operator with symbol $$\rho=\frac{1}{r+k}\,\left(\rho^{\Hit}\circ \kappa_{\calC/S}\right).$$
\end{theorem}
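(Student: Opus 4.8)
The plan is to verify the three hypotheses of Theorem \ref{vgdj} for the symbol $\rho=\frac{1}{r+k}\left(\rho^{\Hit}\circ\kappa_{\calC/S}\right)$ and the line bundle $\LL^k$ on $\pi_e:\cM\to S$, and then read off uniqueness from the last clause of that theorem. Conditions \ref{vgdj-two} and \ref{vgdj-three} are essentially already in hand: Proposition \ref{basicfacts}\ref{basicfactsfour} gives $R^1\pi_{e*}\cO_{\cM}=0$, and since the fibers of $\pi_e$ are the (geometrically connected, proper) moduli spaces $\cM$ one has $\pi_{e*}\cO_{\cM}=\cO_S$, which is \ref{vgdj-three}; for \ref{vgdj-two} the map $\cup[\LL^k]:\pi_{e*}T_{\cM/S}\to R^1\pi_{e*}\cO_{\cM}$ is trivially an isomorphism because \emph{both} sides vanish, the source by Proposition \ref{basicfacts}\ref{basicfactsthree} and the target by \ref{basicfactsfour}. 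So the whole weight of the proof rests on condition \ref{vgdj-one}, the identity $\kappa_{\cM/S}+\mu_{\LL^k}\circ\rho=0$.

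For \ref{vgdj-one} I would assemble the pieces proved in the preceding subsections. By Lemma \ref{constrphi} we have $\kappa_{\cM/S}=\Phi\circ\kappa_{\calC/S}$. By Proposition \ref{thm_mu_O}, $\mu_{\LL^k}=\cup[\LL^k]+\cup\bigl(-\tfrac12[K_{\cM/S}]\bigr)$; using \eqref{thetadet2}, namely $K_{\cM/S}=\LL^{-2r}$, together with Lemma \ref{extens} (which gives $[\LL^m]=m[\LL]$ after identifying Atiyah algebroids), this becomes $\mu_{\LL^k}=(k+r)\,\cup[\LL]$. On the other hand Proposition \ref{phi-rho-L} states precisely that $-\Phi=(\cup[\LL])\circ\rho^{\Hit}$, equivalently $\Phi=-(\cup[\LL])\circ\rho^{\Hit}$. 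Therefore
\[
\mu_{\LL^k}\circ\rho=(k+r)\,(\cup[\LL])\circ\frac{1}{r+k}\bigl(\rho^{\Hit}\circ\kappa_{\calC/S}\bigr)=(\cup[\LL])\circ\rho^{\Hit}\circ\kappa_{\calC/S}=-\Phi\circ\kappa_{\calC/S}=-\kappa_{\cM/S},
\]
which is \ref{vgdj-one}. Here the hypotheses $p\nmid r$ and $p\nmid(k+r)$ enter: the former is needed to make sense of $\cE nd^0(\cE)$ and the splitting used throughout (and implicitly in \eqref{thetadet2}), while the latter guarantees the scalar $\frac{1}{r+k}$ is invertible so that $\rho$ is well-defined; $p\neq2$ is in force from the start of Section \ref{contextvgdj}.

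With the three conditions verified, Theorem \ref{vgdj} produces a unique projective heat operator $D$ on $\LL^k$ with symbol $\rho$, and the construction of \S\ref{contextvgdj} (equation \eqref{conn-heat-op}) turns $D$ into a projective connection on $\pi_{e*}(\LL^k)$, which is locally free by hypothesis; uniqueness of the connection follows from uniqueness of $D$. I expect the only real subtlety to be bookkeeping: making sure the sign conventions in Proposition \ref{phi-rho-L}, in the definition of the relative Atiyah class $[L]$, and in Proposition \ref{thm_mu_O} are consistent when chained together, and confirming that the identification $\atalg(\LL^m)\cong\atalg(\LL)$ of Lemma \ref{extens} is compatible with the connecting homomorphisms so that $\cup[\LL^m]=m\,\cup[\LL]$ really does hold at the level of the maps $\mu$. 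Everything else is a direct substitution of results already established.
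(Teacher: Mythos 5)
Your handling of condition \ref{vgdj-one} is exactly the paper's argument: $\mu_{\LL^k}=(k+r)\cup[\LL]$ via Proposition \ref{thm_mu_O}, Lemma \ref{extens} and the identity $K_{\cM/S}=\LL^{-2r}$ from (\ref{thetadet2}), then the chain $\mu_{\LL^k}\circ\rho=(\cup[\LL])\circ\rho^{\Hit}\circ\kappa_{\calC/S}=-\Phi\circ\kappa_{\calC/S}=-\kappa_{\cM/S}$ using Proposition \ref{phi-rho-L} and Lemma \ref{constrphi}; likewise your observation that condition \ref{vgdj-two} holds trivially because both $\pi_{e*}T_{\cM/S}$ and $R^1\pi_{e*}\cO_{\cM}$ vanish by Proposition \ref{basicfacts}, and the passage from the unique projective heat operator to the projective connection, all coincide with the paper's proof.

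The genuine gap is your justification of condition \ref{vgdj-three}. You assert that the fibres of $\pi_e:\cM\rightarrow S$ are proper and geometrically connected and deduce $\pi_{e*}\cO_{\cM}=\cO_S$; but $\cM$ is the relative moduli space of \emph{stable} bundles, which is not proper -- the projective object is the coarse moduli space $\cM^{\operatorname{ss}}$ of semi-stable bundles, of which $\cM$ is only an open subscheme. The paper instead argues via the algebraic Hartogs theorem \cite[Lemma 11.3.11]{vakil:2017}: $\cM^{\operatorname{ss}}$ is normal and proper over $S$, and when $g>2$ or $r>2$ the complement of $\cM$ in $\cM^{\operatorname{ss}}$ has codimension at least two, so regular functions on $\cM$ extend to $\cM^{\operatorname{ss}}$ and hence come from $S$. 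This is precisely where the hypothesis ``$g\geq 3$ if $r=2$'' enters, a hypothesis your argument never uses -- a sign that the properness step was being taken for granted. The fix is local to this one condition; the rest of your proposal stands as written.
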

\begin{proof} We establish the existence of the projective connection by invoking Theorem~\ref{vgdj} for the line bundle $\LL^{k}$ over $\cM$. 
We recall from (\ref{thetadet2}) the equality $K_{\cM/S}=\LL^{-2r}$.
From Proposition \ref{thm_mu_O} we therefore have that $$\mu_{\LL^k}=\cup(r+k)[\LL],$$ and hence (using Proposition \ref{phi-rho-L} and (\ref{kappaphi})) we have   $$\mu_{\LL^k}\circ\rho=\mu_{\LL^k}\circ \frac{1}{r+k}\,\left(\rho^{\Hit}\circ \kappa_{\calC/S}\right)=\left(\cup[\LL]\right)\circ \rho^{\Hit}\circ \kappa_{\calC/S}=-\Phi\circ \kappa_{\calC/S} = -\kappa_{\cM/S},$$ which 
establishes condition  \ref{vgdj-one} of Theorem~\ref{vgdj}.
Condition \ref{vgdj-two} is trivially satisfied because of Proposition \ref{basicfacts}, and
condition \ref{vgdj-three} follows from the algebraic Hartogs's theorem \cite[Lemma 11.3.11]{vakil:2017}, together with the well-known fact that the relative coarse moduli space  $\cM^{\operatorname{ss}}$ of semi-stable bundles with trivial determinant (which is singular but normal) is proper over $S$, and if $g>2$ or $r>2$, 
the complement of $\cM$ will have codimension greater than one in $\cM^{\operatorname{ss}}$.
\end{proof}
As for the curvature of the connection, we have:
\begin{theorem}\label{connection-flat}
Suppose furthermore that the characteristic of the base field is different from 3. Then the projective connection constructed in Theorem \ref{existenceconnection} is flat.
\end{theorem}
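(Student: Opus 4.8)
The plan is to invoke the flatness criterion of Theorem \ref{thm_flatness}, so the task reduces to verifying its three hypotheses for the symbol $\rho = \frac{1}{r+k}(\rho^{\Hit}\circ\kappa_{\calC/S})$. Conditions (b) and (c) are immediate from what has already been established: condition (c), the vanishing $\pi_{e*}T_{\cM/S}=0$, is exactly Proposition \ref{basicfacts}\eqref{basicfactsthree}; and condition (b), the injectivity of $\mu_{\LL^k}$, follows from the computation in the proof of Theorem \ref{existenceconnection}, where we found $\mu_{\LL^k}=\cup\,(r+k)[\LL]$ (using Proposition \ref{thm_mu_O} together with $K_{\cM/S}=\LL^{-2r}$ from \eqref{thetadet2}), combined with the fact that $\cup[\LL]\colon \pi_{e*}T_{\cM/S}\to R^1\pi_{e*}\cO_{\cM}$ is an isomorphism by Proposition \ref{basicfacts}---and here $p\nmid(r+k)$ is needed so that the scalar $r+k$ does not kill the class. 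The real content is therefore condition (a): the Poisson-commutativity
\[
\{\rho(\theta),\rho(\theta')\}_{T^\ast_{\cM/S}}=0
\]
for all local sections $\theta,\theta'$ of $T_S$.

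For condition (a), I would argue exactly as Hitchin does, reinterpreting $\rho^{\Hit}$ via the Hitchin fibration. Recall from Definition \ref{hitchinsymbol} and Lemma \ref{rho-Hit-isom} that $\rho^{\Hit}$ is built from the quadratic part of the Hitchin map, i.e.\ from the symmetric bilinear form $B(\phi,\psi)=\tr(\phi\circ\psi)$ on $T^\ast_{\cM/S}=\pi_{n*}(\End^0(\cE)\otimes\pi_w^*K_{\calC/S})$. Dually, the symbols $\rho^{\Hit}(\beta)$, for $\beta$ a local section of $R^1\pi_{s*}T_{\calC/S}\cong (\pi_{s*}K_{\calC/S}^{\otimes 2})^\ast$, are precisely the quadratic Hamiltonians $H_\beta$ on $T^\ast_{\cM/S}$ obtained by pairing $\beta$ against the degree-$2$ component of the Hitchin map $h\colon T^\ast_{\cM/S}\to \bigoplus_{i=2}^r \pi_{s*}K_{\calC/S}^{\otimes i}$. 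The key classical fact is that the Hitchin system is algebraically completely integrable, so all components of $h$ Poisson-commute; in particular the quadratic Hamiltonians $H_\beta$ Poisson-commute among themselves, which is exactly
$\{\rho^{\Hit}(\beta),\rho^{\Hit}(\beta')\}=0$. Since $\rho=\frac{1}{r+k}\rho^{\Hit}\circ\kappa_{\calC/S}$ differs only by the scalar and by precomposition with $\kappa_{\calC/S}\colon T_S\to R^1\pi_{s*}T_{\calC/S}$, condition (a) follows. One should check that the Hamiltonians appearing here are genuinely the restrictions of the Hitchin Hamiltonians and not merely ``symbols up to lower order'', but the identification of $T^\ast_{\cM/S}$ with Higgs fields and the trace description of $B$ make this a direct matching of formulae; the Poisson bracket on $\pi_{e*}\Sym^2T_{\cM/S}$ relevant here is the one on the cotangent bundle, as in the Remark following Theorem \ref{thm_flatness}, valid since $2\le p-1$ once $p\neq 2,3$.

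The main obstacle I expect is twofold. First, one must be careful that the commutativity of the Hitchin Hamiltonians is available in positive characteristic: the cleanest route is not to appeal to analytic complete integrability but to prove $\{H_\beta,H_{\beta'}\}=0$ directly, e.g.\ by a local computation in Darboux-type coordinates on $T^\ast_{\cM/S}$ coming from a Zariski-local trivialization, or by the standard argument that the Hitchin map factors through an abelian scheme (the relative Jacobian of the spectral curves) whose fibres are Lagrangian---this part of the theory is characteristic-free away from small primes, but it must be stated carefully given the genus/rank restrictions in force. Second, there is the bookkeeping of signs and of the scalar $\frac{1}{r+k}$, and the verification that precomposition with $\kappa_{\calC/S}$ preserves Poisson-commutativity (it does, since $\{\cdot,\cdot\}$ is bilinear and $\kappa_{\calC/S}$ is $\cO_S$-linear, so $\{\rho(\theta),\rho(\theta')\}=\frac{1}{(r+k)^2}\{\rho^{\Hit}(\kappa_{\calC/S}\theta),\rho^{\Hit}(\kappa_{\calC/S}\theta')\}=0$). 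Modulo these points, flatness is then immediate from Theorem \ref{thm_flatness}.
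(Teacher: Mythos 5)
Your overall strategy coincides with the paper's: flatness is deduced from Theorem \ref{thm_flatness}, with condition (c) given by Proposition \ref{basicfacts}(\ref{basicfactsthree}) and condition (a) by the observation that the functions on $T^\ast_{\cM/S}$ corresponding to $\rho^{\Hit}$ are exactly the quadratic components of the Hitchin system and therefore Poisson-commute (precomposition with $\kappa_{\calC/S}$ and the scalar $\frac{1}{r+k}$ being harmless). However, your verification of condition (b) has a genuine gap. The map whose injectivity is required is $\mu_{\LL^k}\colon \pi_{e*}\Sym^2 T_{\cM/S}\to R^1\pi_{e*}T_{\cM/S}$, and by Proposition \ref{thm_mu_O} together with \eqref{thetadet2} this equals $(r+k)$ times the cup product $\cup[\LL]\colon \pi_{e*}\Sym^2 T_{\cM/S}\to R^1\pi_{e*}T_{\cM/S}$ acting on \emph{symmetric two-tensors}. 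The fact you cite instead, that $\cup[\LL]\colon \pi_{e*}T_{\cM/S}\to R^1\pi_{e*}\cO_{\cM}$ is an isomorphism, is a different map (one symmetric degree lower); it is the condition entering Theorem \ref{vgdj}, and in the present situation it is an isomorphism only vacuously, since both sheaves vanish by Proposition \ref{basicfacts}. It says nothing about injectivity on $\pi_{e*}\Sym^2 T_{\cM/S}$, which is a large sheaf: by Lemma \ref{rho-Hit-isom} it is isomorphic to $R^1\pi_{s*}T_{\calC/S}$, of rank $3g-3$.

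The needed statement is genuinely nontrivial and is what the paper proves as Lemma \ref{mu-L-inj}, via Proposition \ref{cap-isom} in Appendix \ref{appendixbasicfacts}: the injectivity (in fact bijectivity) of $\cup[\LL]$ on $\pi_{e*}\Sym^2 T_{\cM/S}$ is obtained from the Hitchin system, namely by restricting the isomorphism $\Psi$ of Lemma \ref{useful-lemma} (constructed by abelianisation over the regular locus of the Hitchin base) to the weight-two part, and identifying the result with cup product against $[\LL]$ through the \v{C}ech computation encoded in diagram (\ref{lastdiagram}); equivalently, one can phrase it as the statement that $\Phi$ is an isomorphism, combined with Propositions \ref{thm_mu_O} and \ref{phi-rho-L} and Lemma \ref{rho-Hit-isom}, but that route rests on the same appendix argument. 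So your treatment of (a) and (c) is on the paper's track, but to close (b) you must supply an argument of this kind rather than the vanishing statements of Proposition \ref{basicfacts}; your remark that $p\nmid(r+k)$ is needed so the scalar does not kill the class is correct but is already guaranteed by the hypotheses of Theorem \ref{existenceconnection}.
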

\begin{proof}
  We apply Theorem \ref{thm_flatness}: condition (a) holds since by definition of the Hitchin symbol the corresponding homogeneous functions on $T^\ast_{\cM/S}$ are the quadratic components of the Hitchin system, and hence Poisson-commute,
  \[
  \left\{ \rho^{\Hit}(\theta),\rho^{\Hit}(\theta') \right\}_{T^\ast_{\cM/S}} =  0 .
  \]
  Condition (b) is satisfied as $\mu_{\LL^k}$ is injective (see Lemma \ref{mu-L-inj} in Appendix \ref{appendixbasicfacts}), and (c) holds by Proposition \ref{basicfacts}.
\end{proof}

\section{Proof of Theorem \ref{maintracecompl}}\label{sectionbigproof}
We shall need the theory of the \emph{trace complex}, due to Beilinson and Schechtman, or rather a variation thereon due to Bloch and Esnault -- see \cite{beilinson.schechtman:1988} and \cite{bloch.esnault:2002}.  In Appendix \ref{appendixtracecomplex} a summary of this theory is given, and we refer to it  for definitions of the complexes $\tensor*[^{tr\!\!}]{\cA}{^\bullet}$, $\cB^\bullet$, and $\tensor*[^0]{\cB}{^\bullet}$.  We will be applying the trace complex in our particular setting here, where $\cM$ is as in Section \ref{sect_basicfacts}, $\cX = \calC \times_S \cM$ and $f = \pi_n$.  In this context we find that the trace complex simplifies significantly, to give Theorem \ref{maintracecompl}.

Before proving Theorem \ref{maintracecompl} we need to prove a few auxiliary results.
\begin{lemma}\label{tecfacts}
Following the above notation:
\begin{enumerate}[(a)]
\item the direct image $\pi_{n*}{}^0\cB^0(\cE)$ equals 0;
\item the natural map $R^1\pi_{n*}\End^0 (\cE) \to 
R^1\pi_{n*}{}^0\cB^0(\cE)$ is zero.
\end{enumerate}
\end{lemma}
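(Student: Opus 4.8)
The plan is to deduce both statements at once from a single fact about a connecting homomorphism, using the presentation of ${}^0\cB^0(\cE)$ recorded in Appendix \ref{appendixtracecomplex} together with Proposition \ref{basicfacts}. Recall from the construction of the trace-free complex in that appendix that on $\cX = \calC\times_S\cM$ the sheaf ${}^0\cB^0(\cE)$ comes equipped with a canonical short exact sequence
\begin{equation*}
0 \longrightarrow \End^0(\cE) \xrightarrow{\ \iota\ } {}^0\cB^0(\cE) \longrightarrow \mathcal{Q} \longrightarrow 0 ,
\end{equation*}
in which $\iota$ is the natural map appearing in statement (2) and $\mathcal{Q}$ is the $\leq 1$-jet (associated-graded) contribution of the trace complex along the relative diagonal $\Delta_{\cX/\cM}$, whose explicit description is given there. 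Applying $\pi_{n*}$ and invoking $\pi_{n*}\End^0(\cE)=0$ from Proposition \ref{basicfacts}(1), one gets the exact sequence
\begin{equation*}
0 \longrightarrow \pi_{n*}{}^0\cB^0(\cE) \longrightarrow \pi_{n*}\mathcal{Q} \xrightarrow{\ \partial\ } R^1\pi_{n*}\End^0(\cE) \xrightarrow{\ R^1\pi_{n*}\iota\ } R^1\pi_{n*}{}^0\cB^0(\cE).
\end{equation*}
Hence $\pi_{n*}{}^0\cB^0(\cE)=\ker\partial$, so statement (1) is equivalent to injectivity of $\partial$, while $R^1\pi_{n*}\iota=0$ holds exactly when $\partial$ is surjective; both statements together thus amount to the single claim that $\partial\colon\pi_{n*}\mathcal{Q}\xrightarrow{\sim} R^1\pi_{n*}\End^0(\cE)=T_{\cM/S}$ is an isomorphism.

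To establish that, I would unwind from Appendix \ref{appendixtracecomplex} the definitions of $\mathcal{Q}$ and of the displayed extension: by construction this extension is the one whose class, under the canonical identification $\Ext^1_{\cX}(\mathcal{Q},\End^0(\cE))\cong\Hom_{\cM}(\ldots)$ afforded by $\pi_{n*}\End^0(\cE)=0$, corresponds to the tautological element; equivalently, $\partial$ is the identification of $\pi_{n*}\mathcal{Q}$ with $R^1\pi_{n*}\End^0(\cE)$ supplied by the residue $\widetilde{\Res}$ along $\Delta_{\cX/\cM}$. One then checks this is genuinely an isomorphism by a local computation, identifying $\pi_{n*}\mathcal{Q}$ and $R^1\pi_{n*}\End^0(\cE)$ via relative Serre duality for $\pi_n$ (Proposition \ref{basicfacts}(2)), using in addition the vanishing of $\pi_{n*}\End^0(\cE)$ and of $\pi_{n*}\!\left(\End^0(\cE)\otimes\pi_w^*T_{\calC/S}\right)$, the latter because $\End^0(\cE)\otimes\pi_w^*T_{\calC/S}$ is semistable of negative degree on each fibre of $\pi_n$ and so has no global sections.

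The main obstacle is precisely this last point: showing that the connecting map $\partial$ is the canonical isomorphism, and not just some endomorphism of a sheaf of the expected rank. This is exactly what the trace-complex machinery of Beilinson--Schechtman and Bloch--Esnault is engineered to provide — it arranges that the residue along the relative diagonal realizes the identity class in $R^1\pi_{n*}\End^0(\cE)$ — and it is where the bookkeeping of Appendix \ref{appendixtracecomplex} is used, in particular the well-definedness of $\widetilde{\Res}$ (which is one of the reasons characteristic $2$ is excluded). Granting this, $\ker\partial=0$ gives (1) and $\operatorname{coker}\partial=0$ gives (2).
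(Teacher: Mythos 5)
Your reduction is structurally the same as the paper's: push forward the defining short exact sequence of ${}^0\cB^0(\cE)$, use $\pi_{n*}\End^0(\cE)=0$, and observe that claims (1) and (2) together amount to the connecting homomorphism $\partial$ being an isomorphism. Where your argument breaks down is the key step, which you defer to ``unwinding the appendix'' and then sketch in a way that would not work. By the construction in Section \ref{rmk_tracelessB}, the quotient of ${}^0\cB^0(\cE)$ by $\End^0(\cE)$ is not a jet or associated-graded sheaf along the relative diagonal: it is simply the topological inverse image $\pi_n^{-1}T_{\cM/S}$, because $\cB^0(\cE)$ is defined as the pull-back of $\cA_{f/S}(\cE)$ along $f^{-1}T_{\cM/S}\hookrightarrow T_{f/S}$ and ${}^0\cB^0(\cE)$ as its push-out to the traceless quotient. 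Consequently $\pi_{n*}\mathcal{Q}=\pi_{n*}\pi_n^{-1}T_{\cM/S}=T_{\cM/S}$ for free (the fibres of $\pi_n$ are connected curves); no relative Serre duality, no local computation, and no vanishing of $\pi_{n*}\bigl(\End^0(\cE)\otimes\pi_w^*T_{\calC/S}\bigr)$ is needed. The connecting homomorphism is then the canonical deformation-theoretic (Kodaira--Spencer type) map $T_{\cM/S}\to R^1\pi_{n*}\End^0(\cE)$, and it is an isomorphism precisely by Proposition \ref{basicfacts}(2); that is the entire content of the paper's proof.

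In particular, the mechanism you propose for proving that $\partial$ is an isomorphism --- the residue $\widetilde{\Res}$ along $\Delta_{\cX/\cM}$ realizing a tautological extension class --- is not available here: $\widetilde{\Res}$ enters the construction of the degree $-1$ term ${}^0\cB^{-1}(\cE)$ and the duality of Theorem \ref{thmdualityB}, whereas ${}^0\cB^0(\cE)$ involves no diagonal or residue data at all. Likewise, Proposition \ref{basicfacts}(2) is a statement of deformation theory, not of relative Serre duality. So as written, the heart of your proof (the identification of $\mathcal{Q}$ and of $\partial$) is both misattributed and unproved; once you replace $\mathcal{Q}$ by $\pi_n^{-1}T_{\cM/S}$ and identify $\partial$ with the isomorphism of Proposition \ref{basicfacts}(2), the argument closes and coincides with the paper's.
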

\begin{proof}
Recall from Section \ref{rmk_tracelessB} that we have a short exact sequence
$$
\begin{tikzcd}0 \ar[r]& \cE nd^0(\cE) \ar[r]& {}^0\cB^0(\cE) \ar[r]& \pi_n^{-1}T_{\cM/S} \ar[r]& 0.\end{tikzcd}$$
By applying the direct image $\pi_{n*}$ we get
$$\begin{tikzcd}[column sep=small] 0 \ar[r]& \pi_{n*}\cE nd^0(\cE) \ar[r]& \pi_{n*}{}^0\cB^0(\cE) \ar[r]& T_{\cM/S} \ar[r]& R^1\pi_{n*}\cE nd^0(\cE) \ar[r]& R^1\pi_{n*}{}^0\cB^0(\cE) \ar[r]& \cdots .\end{tikzcd}$$
Now, by Proposition \ref{basicfacts} $(a)$ and $(b)$, $\pi_{n*}\cE nd^0(\cE)=0$  and the map $T_{\cM/S} \to R^1\pi_{n*}\cE nd^0(\cE)$ is an isomorphism. The two claims follow.
\end{proof}
\begin{proposition}\label{isodirimage}
There exists an isomorphism $\phi: R^1\pi_{n*}{}^0\cB^{-1}(\cE) \to R^0\pi_{n*}\cB^\bullet(\cE nd^0(\cE))$ that makes the following diagram commute.
\[
\begin{tikzcd}[column sep=small, row sep=small]
  0 \ar[r]& R^1\pi_{n*}(K_{\cX/\cM}) \cong \cO_\cM \ar[r] \ar[d, "2r\cdot \Id_{\cO_\cM}"] & R^1\pi_{n*} {}^0\cB^{-1}(\cE) \ar[r] \ar[d, "\phi", "\cong" swap] & R^1\pi_{n*}\left(\End^0(\cE)\right)\cong T_{\cM/S} \ar[d, "\cong"] \ar[r]& 0 \\
  0 \ar[r] & R^0\pi_{n*}K_{\cX/\cM}[1] \cong \cO_\cM \ar[r] & R^0\pi_{n*}\cB^\bullet(\cE nd^0(\cE)) \ar[r] & T_{\cM/S} \ar[r] & 0 .
\end{tikzcd}\]
In particular $\phi$ induces $2r\cdot \Id_{\cO_\cM}$ on $\cO_\cM$.
\end{proposition}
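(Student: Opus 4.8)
The plan is to realise $\phi$ as the effect on first direct images of a morphism of complexes on $\cX = \calC \times_S \cM$ comparing the traceless trace complex ${}^0\cB^\bullet(\cE)$ of $\cE$ with the trace complex $\cB^\bullet(\cE nd^0(\cE))$ of the canonically defined bundle $\cE nd^0(\cE)$, and then to keep track of the bookkeeping that produces the factor $2r$. First I would cut the left-hand side down to a single hypercohomology group. Recall from Appendix~\ref{appendixtracecomplex} that ${}^0\cB^\bullet(\cE)$ is a two-term complex ${}^0\cB^{-1}(\cE) \to {}^0\cB^0(\cE)$ in degrees $-1$ and $0$, whose degree $-1$ term sits in a short exact sequence $0 \to K_{\cX/\cM} \to {}^0\cB^{-1}(\cE) \to \cE nd^0(\cE) \to 0$, and whose differential annihilates $K_{\cX/\cM}$ and is the composite of the projection onto $\cE nd^0(\cE)$ with the inclusion $\cE nd^0(\cE) \hookrightarrow {}^0\cB^0(\cE)$ of the short exact sequence recalled in Section~\ref{rmk_tracelessB}. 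Applying $R^1\pi_{n*}$ to the first of these sequences, and using $\pi_{n*}\cE nd^0(\cE) = 0$, $R^1\pi_{n*}\cE nd^0(\cE) \cong T_{\cM/S}$ (Proposition~\ref{basicfacts}), $R^1\pi_{n*}K_{\cX/\cM} \cong \cO_\cM$ and $R^2\pi_{n*}K_{\cX/\cM} = 0$, yields exactly the top row of the asserted diagram. On the other hand, in the hypercohomology long exact sequence of ${}^0\cB^\bullet(\cE)$ the term $\pi_{n*}{}^0\cB^0(\cE)$ that precedes $R^0\pi_{n*}{}^0\cB^\bullet(\cE)$ vanishes by Lemma~\ref{tecfacts}(1), while the map $R^1\pi_{n*}{}^0\cB^{-1}(\cE) \to R^1\pi_{n*}{}^0\cB^0(\cE)$ that follows it is zero, since by the shape of the differential it factors through the map $R^1\pi_{n*}\cE nd^0(\cE) \to R^1\pi_{n*}{}^0\cB^0(\cE)$ of Lemma~\ref{tecfacts}(2). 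Hence $R^0\pi_{n*}{}^0\cB^\bullet(\cE) \cong R^1\pi_{n*}{}^0\cB^{-1}(\cE)$, and it remains to construct an isomorphism $R^0\pi_{n*}{}^0\cB^\bullet(\cE) \xrightarrow{\sim} R^0\pi_{n*}\cB^\bullet(\cE nd^0(\cE))$ inducing the prescribed maps on the two sub-objects.

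Next I would invoke the functoriality of the trace-complex construction from the appendix to produce a morphism of complexes $\ad \colon {}^0\cB^\bullet(\cE) \to \cB^\bullet(\cE nd^0(\cE))$ that realises, at the level of trace complexes, the adjoint action of first-order operators on $\cE$ on the bundle $\cE nd^0(\cE)$ (a traceless first-order operator on $\cE$ acts by commutator on $\cE nd(\cE)$, preserves the traceless part, and is the identity on symbols). By construction this $\ad$ carries the subsheaf $K_{\cX/\cM} \subset {}^0\cB^{-1}(\cE)$ into the corresponding subsheaf of the degree $-1$ term of $\cB^\bullet(\cE nd^0(\cE))$, and is compatible both with the projections onto $\cE nd^0(\cE)$ and with the symbol maps to $\pi_n^{-1}T_{\cM/S}$, where one also uses the trace pairing to identify $\cE nd^0(\cE) \cong \cE nd^0(\cE)^*$. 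The bottom row of the diagram is the short exact sequence furnished by the trace-complex theory of the appendix applied to $\cE nd^0(\cE)$, whose determinant-of-cohomology is $\lambda(\cE nd^0(\cE)) = K_{\cM/S} = \cL^{-2r}$ by (\ref{thetadet2}).

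Applying $R^0\pi_{n*}$ to $\ad$ and composing with the identification of the first step then produces a morphism between the two short exact sequences of the proposition. On the quotient $T_{\cM/S}$ the induced map is an isomorphism: under the trace-pairing identification and the Kodaira--Spencer description of $T_{\cM/S}$ from Proposition~\ref{basicfacts}, the adjoint action induces the identity. On the sub-$\cO_\cM$, unravelling the residue maps in the degree $-1$ terms shows that $\ad$ acts by the scalar $2r$ --- the exponent in $\lambda(\cE nd^0(\cE)) = \cL^{-2r} = \lambda(\cE \otimes \pi_w^*\zeta)^{2r}$, using (\ref{thetadet1})--(\ref{thetadet2}). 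Since $p \nmid 2r$ this scalar is invertible, so both outer vertical maps of the resulting morphism of short exact sequences are isomorphisms and, by the five lemma, the middle map is an isomorphism; we take $\phi$ to be this map. The commutativity of the diagram, and the fact that the left vertical arrow is $2r \cdot \Id_{\cO_\cM}$, then follow from functoriality of $R^\bullet\pi_{n*}$ applied to this morphism of extensions.

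The hard part will be the last step: verifying that $\ad$ acts on the $K_{\cX/\cM}$-parts of the degree $-1$ terms by exactly the scalar $2r$, with the right sign. This requires making precise both the trace-complex differential and the adjoint functoriality on the $\omega_{\cX/\cM}$-valued degree $-1$ pieces, where the residue $\widetilde{\Res}$ and the trace pairing intervene, and then matching normalisation conventions so that the constant emerges as the asserted $2r$ rather than some other rank-dependent constant or with the wrong sign. A secondary point requiring care is the construction of $\ad$ itself at the degree $-1$ level --- on the underlying Atiyah algebroids it is simply the adjoint map, but its lift to the trace complexes, compatibly with the residue data, is what has to be pinned down.
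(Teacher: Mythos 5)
Your overall architecture is the paper's: you identify $R^1\pi_{n*}{}^0\cB^{-1}(\cE)$ with $R^0\pi_{n*}{}^0\cB^\bullet(\cE)$ using Lemma \ref{tecfacts} (the paper's $\phi_1$, which you obtain more cleanly from the hypercohomology long exact sequence), then compare with $R^0\pi_{n*}\cB^\bullet(\cE nd^0(\cE))$ via an adjoint-type map of trace complexes and conclude by the five lemma. But the proposal has a genuine gap exactly where you flag ``the hard part'': you never construct the degree $-1$ lift of the adjoint, nor compute the scalar. There is no naive ``functoriality of the trace-complex construction'' producing a morphism ${}^0\cB^\bullet(\cE)\to\cB^\bullet(\cE nd^0(\cE))$; the existence of a map $\widehat{\ad}:{}^0\cB^{-1}(\cE)\to{}^0\cB^{-1}(\cE nd^0(\cE))$ lifting $\ad_0$ and compatible with the residue data is precisely the content of Proposition \ref{ST310}(3) in Appendix \ref{appendixsplitting}, which the paper proves by dualizing: using Theorem \ref{thmdualityB} one has $\delta_\cE:\cA^0_{\cX/\cM}(\cE)^\ast\cong{}^0\cB^{-1}(\cE)$, and one sets $\widehat{\ad}=2r\,\delta_{\cE nd^0(\cE)}\circ\widetilde{s}^{\,\ast}\circ\delta_\cE^{-1}$, where $\widetilde{s}$ extends the splitting $s$ of $\ad_0$ and the key linear-algebra inputs are $s\circ\ad(A)=A-\frac{\tr(A)}{r}\Id$ and $s^\ast=\frac{1}{2r}\ad$ (Lemmas \ref{splitting} and \ref{dualofsplitting}). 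The factor $2r$ on $K_{\cX/\cM}$ falls out of this trace-form (Casimir-type) computation, not out of the identity $\lambda(\cE nd^0(\cE))=\cL^{-2r}$ that you invoke: that determinant-of-cohomology fact enters only later, in the proof of Theorem \ref{maintracecompl}, and cannot serve as a proof of the normalisation here. Without Proposition \ref{ST310} (or an equivalent explicit computation with $\widetilde{\Res}$ and the trace pairing), the central claim of the proposition --- that the induced map on $\cO_\cM$ is $2r\cdot\Id$ --- remains unproved in your write-up.

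A smaller point: even granting $\widehat{\ad}$ and $\widetilde{\ad}$, the paper does not claim a strict morphism of complexes into $\cB^\bullet(\cE nd^0(\cE))$; it routes through the intermediate complex ${}^0\cB^{-1}(\cE nd^0(\cE))\to\cB^0(\cE nd^0(\cE))$ (your map would be the composite $\phi_3\circ\phi_2$) and checks compatibility only at the level of the direct-image classes. If you want to phrase the argument as applying $R^0\pi_{n*}$ to a single map of complexes, you would still owe the verification that the pair $(\widehat{\ad},\widetilde{\ad})$ intertwines the trace-complex differentials, which again requires the explicit description of those differentials rather than an appeal to functoriality. Your treatment of the quotient map on $T_{\cM/S}$ (claiming it is the identity) is also asserted rather than checked, but since the proposition only needs it to be an isomorphism this is a minor issue compared with the missing $2r$ computation.
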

This Proposition is already proved by combining \cite[Thm. 3.7 and Cor. 3.12]{sun.tsai:2004}. For the sake of self-containedness, here we give a complete but slightly different proof of this statement.
\begin{proof}
We construct $\phi$ in several steps, notably as the composition of three maps. First of all, let us define a map
$$\begin{tikzcd}\phi_1: R^1\pi_{n*}{}^0\cB^{-1}(\cE) \ar[r]& R^0\pi_{n*}{}^0\cB^\bullet(\cE).\end{tikzcd}$$
For the sake of clarity, we recall the definition of the $0^{th}$ direct image
$R^0\pi_{n*}{}^0\cB^\bullet(\cE).$ We choose an acyclic resolution of the complex ${}^0\cB^\bullet(\cE)$ as follows
\[
\begin{tikzcd}[row sep=small]
{}^0\cB^{-1}(\cE) \ar[r]\ar[d, hook]& {}^0\cB^0(\cE) \arrow[d, hook] \\
\cC^0({}^0\cB^{-1}(\cE)) \ar[r, "\delta^0"]\ar[d, two heads] & \cC^0({}^0\cB^0(\cE))\ar[d, two heads] \\
\cC^1({}^0\cB^{-1}(\cE)) \ar[r, "\delta^1"] & \cC^1({}^0\cB^0(\cE))
\end{tikzcd}\]
We push this diagram forward through $\pi_n$ and consider the following one:
\[
\begin{tikzcd}[row sep=small]
\pi_{n*}\cC^0({}^0\cB^{-1}(\cE)) \ar[r, "\delta^0"]\ar[d, "d_{-1}"] & \pi_{n*}\cC^0({}^0\cB^0(\cE))\ar[d, "d_0"] \\
\pi_{n*}\cC^1({}^0\cB^{-1}(\cE)) \ar[d, two heads] \ar[r, "\delta^1"] & \pi_{n*}\cC^1({}^0\cB^0(\cE)) \ar[d, two heads] \\
R^1\pi_{n*}{}^0\cB^{-1}(\cE)\ar[r] & R^1\pi_{n*}{}^0\cB^{0}(\cE)
\end{tikzcd}\]
Remark that the lower horizontal arrow factors as $$R^1\pi_{n*}{}^0\cB^{-1}(\cE)  \to R^1\pi_{n*} \End^0(\cE) \to R^1\pi_{n*}{}^0\cB^{0}(\cE).$$ 
By definition we have that $R^0\pi_{n*}{}^0\cB^{-1}(\cE):= \Ker(B)/ \Ima(A)$, where
\begin{equation*}
\begin{tikzcd}[row sep=0pt]
\pi_{n*}\cC^0({}^0\cB^{-1}(\cE)) \ar[r, "A"]& \pi_{n*}\cC^0({}^0\cB^0(\cE)) \oplus \pi_{n*}\cC^1({}^0\cB^{-1}(\cE))\ar[r, "B"] & \pi_{n*} \cC^1({}^0\cB^0(\cE)) \\
(\gamma) \ar[r, mapsto]& (\delta^0(\gamma),d_{-1}(\gamma)) & \\
 & (\alpha, \beta) \ar[r, mapsto] & d_0(\alpha) - \delta^1(\beta).\\
\end{tikzcd}
\end{equation*}
Hence we can define a map
\begin{eqnarray*}
\tilde{\phi}: \pi_{n*}\cC^1({}^0\cB^{-1}(\cE)) & \to & \Ker(B);\\
\beta & \mapsto & (\alpha, \beta);
\end{eqnarray*}
where $\alpha\in\pi_{n*}\cC^0({}^0\cB^0(\cE))$ is uniquely defined by the formula $d_0(\alpha)=\delta^1(\beta)$. In fact we observe that Lemma \ref{tecfacts} implies that $d_0$ is injective and that $\Ima(\delta^1) \subseteq \Ima(d^0)$. The map $\tilde{\phi}$ descends to the first of our three maps:
\begin{eqnarray*}
\phi_1: R^1\pi_{n*}{}^0\cB^{-1}(\cE) & \to & R^0\pi_{n*}{}^0\cB^\bullet(\cE);\\
\bar{\beta} & \mapsto & \overline{(\alpha,\beta)};
\end{eqnarray*}
where the overline should be intended as just taking the corresponding classes.

The second map is defined as follows (see App. \ref{appendixsplitting} for the precise definitions of $\widehat{\ad}$ and $\widetilde{\ad}$):
\begin{eqnarray*}
\phi_2: R^0\pi_{n*}{}^0\cB^\bullet(\cE) & \to & R^0\pi_{n*}({}^0\cB^{-1}(\End^0(\cE)) \to \cB^0(\End^0(\cE)));\\
\overline{(\alpha,\beta)} & \mapsto & (\widetilde{\ad}(\alpha), \widehat{\ad}(\beta));
\end{eqnarray*}
where we abuse once more of the notation (and of the reader's patience) by denoting by $\widehat{\ad}$ and $\widetilde{\ad}$ also the maps on the direct images. Note also that here we consider $\widetilde{\ad}$ as defined on the quotient ${}^0\cB^0(\cE)$ of the subsheaf $\cB^0(\cE)\subset \mathcal{A}(\cE)$, and we are allowed to do so since the trivial sheaf is in $\Ker(\widetilde{\ad})$. Moreover, we can consider $\cB^0(\End^0(\cE))$ as the target space of $\widetilde{\ad}$ the image of ${}^0\cB^0(\cE)$ via $\widetilde{\ad}$ is contained in $\cB^0(\End^0(\cE))\subset \cA(\End^0(\cE))$.

The third map is induced on $R^0\pi_{n*}({}^0\cB^{-1}(\End^0(\cE)) \to \cB^0(\End(\cE)))$ by the natural inclusion ${}^0\cB^{-1}(\End^0(\cE)) \hookrightarrow \cB^{-1}(\End(\cE))$. Hence this gives a natural map
$$\begin{tikzcd}\phi_3: R^0\pi_{n*}({}^0\cB^{-1}(\End^0(\cE)) \ar[r]& \cB^0(\End^0(\cE))) \ar[r]& R^0 \pi_{n*}\cB^\bullet(\End^0(\cE)).\end{tikzcd}$$
It is a standard check that these three maps are well defined and pass to the quotient in cohomology.

The situation is now the following, we have two exact sequences and a map $\phi:= \phi_3\circ \phi_2 \circ \phi_1$ between extensions:
\begin{equation*}
\begin{tikzcd}[column sep=0.8em, row sep=small]
0 \ar[r] &[-0.5ex] R^1\pi_{n*}(K_{\cX/\cM}) \cong \cO_\cM \ar[r]\ar[d]& R^1\pi_{n*}{}^0\cB^{-1}(\cE) \ar[d, "\phi"]\ar[r]& R^1\pi_{n*}(\End^0(\cE))\cong T_{\cM/S} \ar[r]\ar[d]&[-0.5ex] 0\\
0 \ar[r] &[-0.5ex] R^0\pi_{n*}(K_{\cX/\cM})[1])\cong \cO_{\cM} \ar[r]& R^0\pi_{n*}\cB^\bullet(\End^0(\cE)) \ar[r]& T_{\cM/S} \ar[r] &[-0.5ex] 0.
\end{tikzcd}
\end{equation*}
Now, suppose we have a class $\bar{\beta}$ in $R^1\pi_{n*}{}^0\cB^{-1}(\cE)$, and let us consider $\beta$ a local section of $\pi_{n*}\cC^1({}^0\cB^{-1}(\cE))$ representing $\bar{\beta}$. If we denote as above by $\alpha\in \pi_{n*}\cC^0({}^0\cB^0(\cE))$ the uniquely defined local section as in the definition of $\tilde{\phi}$, then $\phi$ sends $\beta$ on $\overline{(\widetilde{\ad}(\alpha),\widehat{\ad}(\beta))}$.

By Proposition \ref{ST310} we have a commutative diagram
\begin{equation*}
\begin{tikzcd}[row sep=small]
0 \ar[r] & K_{\cX/\cM} \ar[r]\ar[d, "\cdot 2r"]& {}^0\cB^{-1}(\cE) \ar[d, "\widehat{\ad}"]\ar[r]& \End^0(\cE) \ar[r]\ar[d, "\ad_0"]& 0\\
0 \ar[r] & K_{\cX/\cM}\ar[r]& {}^0\cB^{-1}(\End^0(\cE)) \ar[r]& \End^0(\End^0(\cE)) \ar[r] & 0.
\end{tikzcd}
\end{equation*}
which implies the claim about the restriction of $\phi$ to $\cO_\cM$. Thus $\phi$ also descends to a $\cO_\cX$-linear map $\phi^T: T_{\cM/S} \to T_{\cM/S}.$ Remark in fact that, again by Appendix \ref{appendixsplitting} and the observations on $\widetilde{\ad}$ made here above, $\phi^T$ is induced by the adjoint map between the following exact sequences.
\begin{equation*}
\begin{tikzcd}[row sep=small]
0 \ar[r] & \End^0(\cE) \ar[r]\ar[d, "\ad"]& {}^0\cB^{0}(\cE) \ar[d, "\widetilde{\ad}_0"]\ar[r]& \pi_{n}^{-1}(T_{\cM/S}) \ar[r]\ar[d, "\Id", "\cong" swap]& 0\\
0 \ar[r] & \End(\End^0(\cE)) \ar[r]& \cB^{0}(\End^0(\cE)) \ar[r]& \pi_{n}^{-1}(T_{\cM/S}) \ar[r] & 0.
\end{tikzcd}
\end{equation*}
\end{proof}
\begin{proof}[Proof of Theorem \ref{maintracecompl}]The isomorphism of exact sequences claimed in the theorem will follow by composing the following isomorphisms. In the diagram below they will be composed vertically from the first to the fifth. First we apply $R^1\pi_{n*}$ to the second identification from Theorem \ref{thmdualityB}. Then we compose with the map from Proposition \ref{isodirimage}. The third map is the isomorphism from Theorem \ref{easyBS} applied to $\cE nd^0(\cE)$ (recall that $\lambda(\cE nd^0(\cE)) = \cL^{-2r}$). The fourth and fifth map is the canonical isomorphism $\cA(\cL^{-1}) \cong \cA(\cL^{-2r})$ obtained by scaling appropriately the extension as in Lemma \ref{extens} with $k=2r$ and $L=\LL^{-1}$. Finally the last vertical isomorpism $\cA(\cL^{-1}) \to \cA(\cL)$ is the canonical map
between the Atiyah algebra of $\cL^{-1}$ and its dual $\cL$ (with the opposite symbol map).
Hence we obtain the following commutative diagram
\begin{equation*}
\begin{tikzcd}[column sep=small, row sep=small]
0 \ar[r]& \cO_{\cM} \ar[r]\ar[d, "\cong", "\Id_{\cO_{\cM}}" swap] & R^1\pi_{n*}(\cA^0_{\cX/\cM}(\cE)^\ast) \ar[r]\ar[d, "\cong", "\widetilde{Res}" swap] & R^1\pi_{n*}(\cE nd^0(\cE)^*) \ar[d, "\cong", "-\operatorname{Tr}" swap]\ar[r]& 0 \\
0 \ar[r]& \cO_{\cM} \ar[d, "\cong", "2r\cdot \Id_{\cO_{\cM}}" swap] \ar[r]& R^1\pi_{n*}({}^0\cB^{-1}(\cE)) \ar[r]\ar[d, "\phi"]& R^1\pi_{n*}(\cE nd^0(\cE)) \ar[r]\ar[d, "\cong"]& 0 \\
0 \ar[r]& \cO_{\cM} \ar[d, "\cong"] \ar[r]& R^1\pi_{n*}\cB^{\bullet}(\cE nd^0(\cE)) \ar[r]\ar[d, "\cong"]& T_{\cM/S} \ar[r]\ar[d, "\cong"]& 0 \\
0 \ar[r]& \cO_{\cM} \ar[d, "\cong"]\ar[r]& \cA(\cL^{-2r}) \ar[r, "\sigma_1"]\ar[d, "\cong"]& T_{\cM/S} \ar[r]\ar[d, "\cong"]& 0 \\
0 \ar[r]& \cO_{\cM} \ar[d, "\cong"] \ar[r, "\frac{1}{2r}"]& \cA(\cL^{-1}) \ar[d, "\cong"]  \ar[r, "\sigma_1"]& T_{\cM/S} \ar[d, "\cong"] \ar[r] & 0 \\
0 \ar[r]& \cO_{\cM} \ar[r, "\frac{1}{2r}"]& \cA(\cL) \ar[r, "-\sigma_1"]& T_{\cM/S} \ar[r] & 0.
\end{tikzcd}
\end{equation*}
Note that the first vertical right hand side map is $-\operatorname{Tr}$. This means that the extension class defining the upper short exact sequence is
equal to the standard Atiyah sequence of $\cL$ as claimed in the Theorem. 
\end{proof}

\appendix
\section{The trace complex, following Beilinson--Schechtman and Bloch--Esnault}\label{appendixtracecomplex}
We give here a  presentation of the parts of the theory of \emph{trace complexes} (due to Beilinson and Schechtman \cite[\S 2]{beilinson.schechtman:1988}, see also \cite{esnault.tsai:2000}) that we need.  We then describe an alternative approach to the trace complexes, suggested by Bloch and Esnault \cite[\S 5.2]{bloch.esnault:2002}.
 
 In fact, to suit our purposes, we make two minor variations: first, we make some small changes to ensure that the construction works in positive characteristic (apart from 2), and secondly, we phrase everything in a relative context.  The latter is trivial on a technical level, but we do it as the Bloch-Esnault approach requires an extra condition, which, when we invoke it in the main part of the article, is only satisfied in a relative setting.

Section \ref{sectiononBS} below covers the original trace complex, and is just expository.  In Section \ref{sectiononBE}, where the alternative of Bloch-Esnault is explained, we also give proofs for various assertions merely stated in \cite{bloch.esnault:2002}.

For the purpose of this appendix, we consider a family of smooth projective curves $f: \cX \to \cM$ of genus $g\geq 2$, relative to a smooth base scheme $S$,
\[
 \begin{tikzcd}[column sep=small, row sep=small]
  \cX \arrow[r, "f"] \arrow[rd] &  \cM \arrow[d] \\
  & S , 
 \end{tikzcd}
\]
together with a vector bundle $\cE \to \cX$.  We shall write $\cE^\circ$ for $\cE^\ast \otimes K_{\cX/\cM}$.

 The trace complex we are interested in describes the Atiyah algebroid $\cA_{\cM/S}(\det R^\bullet f_\ast \cE)$ (remark that our notation differs from Beilinson and Schechtman's: our $\cM$ is their $S$, and  our $S$ is just a point in  \cite{beilinson.schechtman:1988}).
\subsection{The Beilinson--Schechtman trace complex $\tensor*[^{\operatorname{tr}\!\! }]{\cA}{^\bullet}(\cE)$}\label{sectiononBS} 
\newcommand{\fp}{\cC\times_S \cM}
\subsubsection{Overview}
The relative tangent bundle $T_{\cX/S}$ contains as subsheaves $T_{\cX/\cM} \subset T_{f/S} \subset T_{\cX/S}$, where (with $df: T_{\cX/S}\rightarrow f^*T_{\cM/S}$)
\[
 T_{f/S} := (df)^{-1} f^{-1}T_{\cM/S} ,
\]
and corresponding Atiyah algebroids
\begin{equation*}
 \cA_{\cX/\cM}(\cE) \hookrightarrow \cA_{f/S}(\cE) \hookrightarrow \cA_{\cX/S}(\cE) .
\end{equation*}
The Beilinson-Schechtman trace complex is a three-term complex \[
\tensor*[^{\operatorname{tr}\!\! }]{\cA}{^\bullet}(\cE)=\left\{\begin{tikzcd} \tensor*[^{\operatorname{tr}\!\! }]{\cA}{^{-2}}(\cE) \ar[r]& \tensor*[^{\operatorname{tr}\!\! }]{\cA}{^{-1}}(\cE) \ar[r] & \tensor*[^{\operatorname{tr}\!\! }]{\cA}{^0}(\cE) \end{tikzcd}\right\},
\]
where $\tensor*[^{\operatorname{tr}\!\! }]{\cA}{^{-2}}(\cE)=\cO_{\cX}$, $\tensor*[^{\operatorname{tr}\!\! }]{\cA}{^0}(\cE) =\cA_{f/S}(\cE)$, and $\tensor*[^{\operatorname{tr}\!\! }]{\cA}{^{-1}}(\cE)$ is an extension (to be defined below in Section \ref{BS-construction})
)
\begin{equation}\label{at1}
\begin{tikzcd}
0\ar[r]& K_{\cX/\cM} \ar[r] & \tensor*[^{\operatorname{tr}\!\!}]{\cA}{^{-1}}(\cE) 
\ar[r, "\operatorname{res}"] & 
\cA_{\cX/\cM}(\cE) \ar[r]& 0, 
\end{tikzcd}
\end{equation}
which fits into the following commutative diagram
\begin{equation}\label{3cpxes}
\begin{tikzcd}[row sep=small] 
& \mathcal{O}_{\cX} \ar[r, equal] \ar[d, "d_{\cX/\cM}"] 
& \tensor*[^{\operatorname{tr}\!\! }]{\cA}{^{-2}}(\cE) \ar[d, "{d_{\cX/\cM}}"] 
&        &  \\
0 \ar[r]   & K_{\cX/\cM} \ar[r]                                 & \tensor*[^{\operatorname{tr}\!\! }]{\cA}{^{-1}}(\cE) \ar[r, "\operatorname{res}"] \ar[d, "\operatorname{res}"]& \cA_{\cX/\cM}(\cE) \ar[r] \ar[d] & 0 \\
           &                                                  & \tensor*[^{\operatorname{tr}\!\! }]{\cA}{^0}(\cE) \ar[r, equal] &  \cA_{f/S}(\cE). & 
           \end{tikzcd}
\end{equation}
The main use of the trace complex $\tensor*[^{\operatorname{tr}\!\!} ]{\cA}{^\bullet}(\cE)$ is the following:
\begin{theorem}[{\cite[Thm. 2.3.1]{beilinson.schechtman:1988}} ]\label{main1}
The relative Atiyah sequence of the determinant-of-cohomology line bundle 
$$ \lambda(\cE) = \det R^\bullet f_\ast \cE := \det f_\ast \cE\otimes \left(\det R^1 f_\ast \cE\right)^*$$ 
of $\cE$ with respect to $f$ is canonically isomorphic to the short exact sequence
\[
\begin{tikzcd}[column sep=tiny, row sep=small]
0\ar[r] & R^0f_*\left(\Omega^{\bullet}_{\cX/\cM}[2]\right) \ar[r]\ar[d,"\cong"]& R^0 f_\ast(\tensor*[^{\operatorname{tr}\!\! }]{\cA}{^\bullet}(\cE)) \ar[r] \ar[d, "\cong"] & R^0f_*\left(\left({
{\begin{array}{@{}c@{}} {\atalg_{\cX/\cM}(\cE)}
\\ \downarrow \\ \atalg_{f/S}(\cE) \end{array}}
}\right)[1]\right) \ar[r]\ar[d,"\cong"] & 0\\
0 \ar[r] & \cO_{\cM} \ar[r] & \atalg_{\cM/S}(\lambda(\cE)) \ar[r, "\sigma_1"] \ar[r] & T_{\cM/S} \ar[r] & 0.
\end{tikzcd}
\]
\end{theorem}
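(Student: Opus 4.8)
The plan is to build the isomorphism of short exact sequences in the statement directly and then deduce that it is an isomorphism from the five lemma applied to the two extreme terms. First I would spell out the construction of $\tensor*[^{\operatorname{tr}\!\!}]{\cA}{^{-1}}(\cE)$ and of the differentials of $\tensor*[^{\operatorname{tr}\!\!}]{\cA}{^\bullet}(\cE)$ (this is the content of Section~\ref{BS-construction}): the differential $\cO_\cX=\tensor*[^{\operatorname{tr}\!\!}]{\cA}{^{-2}}(\cE)\to\tensor*[^{\operatorname{tr}\!\!}]{\cA}{^{-1}}(\cE)$ is $d_{\cX/\cM}$, with image in the subsheaf $K_{\cX/\cM}$ of~(\ref{at1}), and the differential $\tensor*[^{\operatorname{tr}\!\!}]{\cA}{^{-1}}(\cE)\to\tensor*[^{\operatorname{tr}\!\!}]{\cA}{^0}(\cE)=\cA_{f/S}(\cE)$ is $\operatorname{res}$ followed by the inclusion $\cA_{\cX/\cM}(\cE)\hookrightarrow\cA_{f/S}(\cE)$, so that in particular it annihilates $K_{\cX/\cM}$. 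Diagram~(\ref{3cpxes}) then exhibits a short exact sequence of complexes
\[
0\longrightarrow \Omega^\bullet_{\cX/\cM}[2]\longrightarrow \tensor*[^{\operatorname{tr}\!\!}]{\cA}{^\bullet}(\cE)\longrightarrow \bigl(\cA_{\cX/\cM}(\cE)\to\cA_{f/S}(\cE)\bigr)[1]\longrightarrow 0,
\]
and applying $R^0f_\ast$ to it produces exactly the top row in the statement, once one knows $R^2f_\ast\cO_\cX=0$ (the fibres are curves) together with the relative trace identification $R^1f_\ast K_{\cX/\cM}\cong\cO_\cM$ (which is where characteristic $2$ must be excluded, through the well-definedness of the residue).

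Next I would produce the vertical arrows, which is the conceptual heart. A local section $D$ of $\tensor*[^{\operatorname{tr}\!\!}]{\cA}{^0}(\cE)=\cA_{f/S}(\cE)$ over an open $U\subset\cX$ is a first-order operator on $\cE$ whose symbol is a vector field on $\cX/S$ projecting under $df$ to (the pullback of) a vector field $\theta$ on $\cM/S$; since $\cA_{f/S}(\cE)\to f^{-1}T_{\cM/S}$ is surjective locally on $\cX$, a fibre neighbourhood can be covered by opens $U_\alpha$ carrying such lifts $D_\alpha$ of a given $\theta$, with $D_\alpha-D_\beta$ a vertical operator, i.e. a section of $\cA_{\cX/\cM}(\cE)$. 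The family $(D_\alpha)$ acts on a \v{C}ech complex computing $Rf_\ast\cE$ and hence on its determinant $\lambda(\cE)$; the naive determinant of this action is \emph{not} well defined, but its defect is a trace, and this trace is precisely absorbed, on the one hand, by lifting each $D_\alpha$ to the $\tensor*[^{\operatorname{tr}\!\!}]{\cA}{^{-1}}(\cE)$-level, where the $K_{\cX/\cM}$-summand records the trace correction, and on the other hand, by the degree $-2$ term $\cO_\cX$ on double overlaps. Thus a hypercohomology class in $R^0f_\ast(\tensor*[^{\operatorname{tr}\!\!}]{\cA}{^\bullet}(\cE))$ is exactly the datum of a globally well-defined first-order operator on $\lambda(\cE)$ along $T_{\cM/S}$ presented by local lifts, i.e. a section of $\cA_{\cM/S}(\lambda(\cE))$; this gives the middle vertical map, and the \v{C}ech/hypercohomology formulation makes its independence of the chosen cover and lifts automatic. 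I would organise the verification as a morphism from the displayed short exact sequence of complexes to the relative Atiyah sequence of $\lambda(\cE)$, checking compatibility of the squares of~(\ref{3cpxes}) on a \v{C}ech model.

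Finally the two extreme vertical maps are pinned down directly: on the sub-objects, $R^1f_\ast$ of $\cO_\cX\xrightarrow{d_{\cX/\cM}}K_{\cX/\cM}$ composed with the relative trace is the canonical inclusion $\cO_\cM\hookrightarrow\cA_{\cM/S}(\lambda(\cE))$; on the quotients, $R^0f_\ast$ of $\cA_{f/S}(\cE)\to T_{f/S}\to f^{-1}T_{\cM/S}$ is the symbol $\sigma_1$ of $\lambda(\cE)$, which just says that differentiating the determinant of cohomology in the direction $\theta$ has symbol $\theta$. The five lemma then yields the claimed isomorphism of short exact sequences, and canonicity follows from the intrinsic nature of all the maps involved. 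I expect the genuine obstacle to be the middle step: showing that the determinant/trace anomaly of the \v{C}ech action is captured \emph{exactly} by the residue map $\operatorname{res}\colon\tensor*[^{\operatorname{tr}\!\!}]{\cA}{^{-1}}(\cE)\to\cA_{\cX/\cM}(\cE)$ and the de Rham differential into $K_{\cX/\cM}$, with the correct sign and no residual term. This is a concrete but delicate computation with first-order operators acting on \v{C}ech complexes together with the residue pairing, and it is the technical core of the Beilinson--Schechtman argument — the one place where the original proof must be re-examined for validity in characteristic $p$.
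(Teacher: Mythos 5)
You should first be aware that the paper does not prove Theorem \ref{main1} at all: it is quoted from Beilinson--Schechtman \cite{beilinson.schechtman:1988} (see also \cite{esnault.tsai:2000}), and Section \ref{sectiononBS} of Appendix \ref{appendixtracecomplex} is declared to be purely expository; the only things proved there are the statements needed for the Bloch--Esnault variant $\cB^\bullet(\cE)$ (e.g.\ Theorem \ref{thmdualityB}) and the verification that the characteristic restrictions already in force suffice for the parts of the theory actually used. So there is no internal proof to compare yours with; what you have written is an outline of the original Beilinson--Schechtman/Esnault--Tsai argument, and as an outline it is the right one: the short exact sequence of complexes extracted from (\ref{3cpxes}), the identifications $R^{2}f_{*}\cO_{\cX}=0$ and $R^{1}f_{*}K_{\cX/\cM}\cong\cO_{\cM}$, construction of the middle arrow from local first-order lifts of a vector field on $\cM$ acting on a complex computing $Rf_{*}\cE$, identification of the two extreme arrows, and the five lemma.

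As a proof, however, there is a genuine gap, and it is exactly the one you flag yourself: the entire content of the theorem is the middle step, namely that the trace/determinant anomaly of the naive action of the local lifts $D_{\alpha}$ on $\lambda(\cE)$ is absorbed \emph{exactly} by the $K_{\cX/\cM}$-component of $\tensor*[^{\operatorname{tr}\!\!}]{\cA}{^{-1}}(\cE)$ and by the degree $-2$ term, with the correct sign and normalisation, so that a $0$-th hypercohomology class of $\tensor*[^{\operatorname{tr}\!\!}]{\cA}{^{\bullet}}(\cE)$ determines a well-defined element of $\atalg_{\cM/S}(\lambda(\cE))$. Carrying this out requires a concrete model of $Rf_{*}\cE$ by a two-term complex of vector bundles (or an affine \v{C}ech model) together with the Knudsen--Mumford determinant \cite{MK}, independence of that choice, and the local residue computation showing that the defect of the determinant of a first-order operator is a residue; none of this is carried out in your proposal, so it is a correct reduction to the technical core rather than a proof --- and that core is precisely what this paper delegates to \cite{beilinson.schechtman:1988,esnault.tsai:2000}. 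Two smaller points: exactness of the top row is itself part of the statement and needs the hypercohomology long exact sequence together with the vanishing of the relevant boundary terms (the fibres being curves), which you should make explicit; and the exclusion of characteristic $2$ does not come from the identification $R^{1}f_{*}K_{\cX/\cM}\cong\cO_{\cM}$ (relative duality is characteristic-free) but from the well-definedness of the symmetrised residue $\widetilde{\Res}$ entering the trace-complex constructions, as recorded in the introduction and in Appendix \ref{appendixtracecomplex}.
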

\subsubsection{Construction of $\tensor*[^{\operatorname{tr}\!\! }]{\cA}{^{-1}}(\cE)$}\label{BS-construction}
Let $\Delta \cong \cX \subset \cX \times_\cM \cX$ denote the diagonal, and $p_1$ and $p_2$ the two projections of $\cX \times_\cM \cX$ to $\cX$.  For each of the projections $p_1, p_2$ we have a residue map $\operatorname{Res}^1, \operatorname{Res}^2$ along the fibres (cfr \cite{tate:1968,beilinson:1980,braunling:2018}).  The following is a key ingredient for us:
\begin{lemma}[{\cite[\S 2.1.1.1]{beilinson.schechtman:1988}}]
There exists a map $$\widetilde{\operatorname{Res}}: K_{\cX/\cM}\boxtimes K_{\cX/\cM}(3\Delta)  \rightarrow \cO_{\cX},$$ which vanishes on $K_{\cX/\cM}\boxtimes K_{\cX/\cM}(\Delta)$, is symmetric with respect to transposition, and such that $d\widetilde{\operatorname{Res}}=\operatorname{Res}^1-\operatorname{Res}^2$.  The restriction of $\widetilde{\Res}$ to $K_{\cX/\cM}\boxtimes K_{\cX/\cM}(2\Delta)$ gives a short exact sequence
\begin{equation*}\begin{tikzcd}[row sep=small]
0 \ar[r] & K_{\cX/\cM}\boxtimes K_{\cX/\cM}(\Delta) \ar[r] &  K_{\cX/\cM}\boxtimes K_{\cX/\cM}(2\Delta) \ar[dl, shorten=-2ex, "{\operatorname{res}_\Delta = \widetilde{\operatorname{Res}}}" description]
 \\   &K_{\cX/\cM}\boxtimes K_{\cX/\cM}(2\Delta)_{|\Delta}\cong \cO_{\cX} \ar[r]   & 0, \\
\end{tikzcd}
\end{equation*}
where the second map is $\widetilde{\Res}$, and coincides with the restriction to the diagonal $\Delta$.
\end{lemma}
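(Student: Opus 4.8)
The plan is to reproduce the construction of \cite[\S 2.1.1.1]{beilinson.schechtman:1988} in the relative setting of the two projections $p_1,p_2\colon\cX\times_\cM\cX\to\cX$, tracking where the characteristic must be restricted. Since $\Delta$ is a section of each $p_i$ and the relative canonical bundle of $p_i$ is the pullback of $K_{\cX/\cM}$ along the other projection, residues along $\Delta$ supply, for each $n\ge 1$, two coordinate-free fibrewise residue maps $\operatorname{Res}^1,\operatorname{Res}^2\colon K_{\cX/\cM}\boxtimes K_{\cX/\cM}(n\Delta)\to K_{\cX/\cM}$. With the standard sign conventions their difference, restricted to $K_{\cX/\cM}\boxtimes K_{\cX/\cM}(2\Delta)$, equals $d_{\cX/\cM}\circ\operatorname{res}_\Delta$, where $\operatorname{res}_\Delta$ is restriction to the diagonal under the identification $K_{\cX/\cM}\boxtimes K_{\cX/\cM}(2\Delta)|_\Delta\cong\cO_\cX$ coming from $N_\Delta\cong T_{\cX/\cM}$ (which more generally gives $K_{\cX/\cM}\boxtimes K_{\cX/\cM}(n\Delta)|_\Delta\cong K_{\cX/\cM}^{\otimes(2-n)}$).

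First I would define $\widetilde{\operatorname{Res}}$ locally. In a relative coordinate $t$ on $\cX/\cM$, with $x=p_1^\ast t$ and $y=p_2^\ast t$ near $\Delta=\{x=y\}$, every local section of $K_{\cX/\cM}\boxtimes K_{\cX/\cM}(3\Delta)$ is uniquely of the form $\omega=h\,(x-y)^{-3}\,dx\,dy$ with $h$ regular, and one sets $\widetilde{\operatorname{Res}}(\omega):=\tfrac12\bigl(\partial_x h-\partial_y h\bigr)|_{\Delta}$. A routine coordinate computation checks the asserted properties: $\widetilde{\operatorname{Res}}$ kills $K_{\cX/\cM}\boxtimes K_{\cX/\cM}(\Delta)$ (there $h\in(x-y)^2\cO$, so both first derivatives vanish on $\Delta$); it is symmetric under the factor-swap (the sign in $(y-x)^{-3}=-(x-y)^{-3}$ is what makes it so); on $K_{\cX/\cM}\boxtimes K_{\cX/\cM}(2\Delta)$, where $h=(x-y)h_1$, it reduces to $\operatorname{res}_\Delta$, namely $\widetilde{\operatorname{Res}}(\omega)=h_1|_\Delta$; and $d_{\cX/\cM}\widetilde{\operatorname{Res}}=\operatorname{Res}^1-\operatorname{Res}^2$, because the residue of a third-order pole extracts the second Taylor coefficient $\tfrac12\partial^2 h$ of the numerator. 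It is precisely this factor $\tfrac12$ --- equivalently, the fact that $\operatorname{Res}^1(\omega)-\operatorname{Res}^2(\omega)$ need not be $d_{\cX/\cM}$-exact when $p=2$ (for suitable $\omega$ it is a $1$-form such as $y\,dy$) --- that forces the hypothesis $p\ne 2$.

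The substantive point, and the one I expect to be the main obstacle, is that this local recipe is independent of the chosen coordinate $t$ and hence glues to a global $\widetilde{\operatorname{Res}}\colon K_{\cX/\cM}\boxtimes K_{\cX/\cM}(3\Delta)\to\cO_\cX$. I would establish this by a uniqueness argument rather than by change of variables. A short check shows that the failure of $\cO_\cX$-linearity of the local formula, $\widetilde{\operatorname{Res}}(g\omega)-g|_\Delta\,\widetilde{\operatorname{Res}}(\omega)$, depends only on the image of $\omega$ in $K_{\cX/\cM}\boxtimes K_{\cX/\cM}(3\Delta)|_\Delta$ and on the $1$-jet of $g$ along $\Delta$, hence is intrinsic; consequently the difference $\psi$ of two local constructions is honestly $\cO_\cX$-linear, kills $K_{\cX/\cM}\boxtimes K_{\cX/\cM}(\Delta)$, and --- as both satisfy $d_{\cX/\cM}(-)=\operatorname{Res}^1-\operatorname{Res}^2$ --- takes values in $\ker d_{\cX/\cM}$. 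Using $\operatorname{Res}^1-\operatorname{Res}^2=d_{\cX/\cM}\circ\operatorname{res}_\Delta$ on $K_{\cX/\cM}\boxtimes K_{\cX/\cM}(2\Delta)$, the $\cO_\cX$-linear map $\psi$ vanishes on that subsheaf and thus factors through $K_{\cX/\cM}\boxtimes K_{\cX/\cM}(3\Delta)|_\Delta\cong T_{\cX/\cM}$; but an $\cO_\cX$-linear map $T_{\cX/\cM}\to\cO_\cX$ with image in $\ker d_{\cX/\cM}$ must be zero, since $\ker d_{\cX/\cM}$ is not stable under multiplication by sections of $\cO_\cX$ along the positive-dimensional fibres of $f$ --- an argument insensitive to the characteristic. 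Hence $\psi=0$, the local pieces agree on overlaps, and $\widetilde{\operatorname{Res}}$ is globally well defined.

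Granting $\widetilde{\operatorname{Res}}$, the final short exact sequence is immediate: on $K_{\cX/\cM}\boxtimes K_{\cX/\cM}(2\Delta)$ the map $\widetilde{\operatorname{Res}}=\operatorname{res}_\Delta$ is restriction to $\Delta$, which is surjective onto $\cO_\cX$ with kernel $K_{\cX/\cM}\boxtimes K_{\cX/\cM}(2\Delta)\otimes\cI_\Delta=K_{\cX/\cM}\boxtimes K_{\cX/\cM}(\Delta)$. Finally, passing from the single-curve statement of \cite{beilinson.schechtman:1988} to the family $\cX\to\cM$ over $S$ changes nothing of substance: $d_{\cX/\cM}$ and $f^{-1}\cO_\cM$ play the roles of the absolute differential and of the constants, and since every sheaf in sight is pulled back from the curve, this is literally the classical construction performed over $S$ and base-changed.
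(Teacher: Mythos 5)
You have in effect supplied a proof where the paper does not give one: Section A.1 of the appendix is explicitly expository, and this lemma is quoted verbatim from \cite[\S 2.1.1.1]{beilinson.schechtman:1988}; the paper's only own contributions on this point are the remark (in the introduction) that the biresidue $\widetilde{\operatorname{Res}}$ of \cite[p. 658]{beilinson.schechtman:1988} is what forces $p\neq 2$, and the observation that the relative setting costs nothing. Your reconstruction is the Beilinson--Schechtman biresidue: writing $h=b_0(y)+b_1(y)(x-y)+b_2(y)(x-y)^2+\cdots$, your formula $\tfrac12(\partial_xh-\partial_yh)|_\Delta$ equals $(b_1-\tfrac12 b_0')|_\Delta$, i.e.\ their normal form, and your checks of the vanishing on $K_{\cX/\cM}\boxtimes K_{\cX/\cM}(\Delta)$, the symmetry, the reduction to $\operatorname{res}_\Delta$ on $K_{\cX/\cM}\boxtimes K_{\cX/\cM}(2\Delta)$, the resulting short exact sequence, and the characteristic-$2$ obstruction (your non-exact $y\,dy$ example is exactly right) are all sound. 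The genuinely different ingredient is the gluing: instead of a change-of-coordinates computation you note that the defect of $\cO_\cX$-linearity of the local recipe is intrinsic, so the difference of two local definitions is an honest $\cO_\cX$-linear map factoring through $K_{\cX/\cM}\boxtimes K_{\cX/\cM}(3\Delta)|_\Delta\cong T_{\cX/\cM}$ with values in $\ker d_{\cX/\cM}$, hence zero; this is a clean, characteristic-free argument and a nice alternative to the coordinate check implicit in the original reference.

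Two small points of care. First, the vanishing of the difference $\psi$ on $K_{\cX/\cM}\boxtimes K_{\cX/\cM}(2\Delta)$ should be deduced from your explicit computation that \emph{both} local definitions restrict there to the canonical $\operatorname{res}_\Delta$, not from the displayed residue identity, which only controls $d_{\cX/\cM}$ of the difference. Second, the identity $d\widetilde{\operatorname{Res}}=\operatorname{Res}^1-\operatorname{Res}^2$ is sensitive to how the two fibrewise residues are normalised: with the naive identifications of both relative dualizing sheaves of the projections with pullbacks of $K_{\cX/\cM}$ (under which transposition interchanges $\operatorname{Res}^1$ and $\operatorname{Res}^2$ without a sign), your formula satisfies $d\widetilde{\operatorname{Res}}=\operatorname{Res}^1+\operatorname{Res}^2$; note that the minus-sign version combined with the symmetry of $\widetilde{\operatorname{Res}}$ and a sign-free swap would force $d\widetilde{\operatorname{Res}}=0$. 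So one must adopt the sign conventions of \cite{beilinson.schechtman:1988} for one of the residues before asserting the displayed sign. Neither point affects the properties of $\widetilde{\operatorname{Res}}$ that are actually used later in the appendix.
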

We shall also need a particular description of the sheaf of (relative) first order differential operators $\diffone_{\cX/\cM}(\cE)$ (see \cite[2.1.1.2]{beilinson.schechtman:1988} or the introduction of \cite{esnault.tsai:2000}, from which we borrow the notation). 
Here and in what follows, we identify sheaves supported on the diagonal $\Delta$ with sheaves on $\cX$. The next
lemma is easily deduced from the definition of the ``pole at $\Delta$" map.
\begin{lemma}
The symbol short exact sequence for first order differential operators on $\cE$ relative to $f$ is isomorphic to the exact 
sequence
\begin{equation}\label{diagonals}
\begin{tikzcd}[row sep=small]
  0 \arrow[r] & \frac {\cE \boxtimes \cE^\circ (\Delta)}{\cE \boxtimes \cE^\circ} \arrow[r] \arrow[d, "\cong"] & \frac {\cE \boxtimes \cE^\circ (2\Delta)}{\cE \boxtimes \cE^\circ} \arrow[r] \arrow[d, "\delta"] & \frac {\cE \boxtimes \cE^\circ (2\Delta)}{\cE \boxtimes \cE^\circ (\Delta)} \arrow[r] \arrow[d, "\cong"] & 0 \\
  0 \arrow[r] & \cE nd(\cE) \arrow[r] & \diffone_{\cX/\cM}(\cE) \arrow[r,"\sigma_1"] & T_{\cX/\cM}\otimes \cE nd(\cE) \arrow[r] & 0.
 \end{tikzcd}
\end{equation}
where $\delta$ is the ``pole at $\Delta$'' map defined by 
$$\delta(\psi)(e) = \operatorname{Res}^2(\langle \psi, p_2^*(e) \rangle),$$
for any local section $\psi$ of $\frac{\cE \boxtimes \cE^\circ (2\Delta)}{\cE \boxtimes \cE^\circ}$ and any local section $e$ of $\cE$.
Here $\langle -,- \rangle$ is the natural pairing $\cE^\circ \times \cE \to  K_{\cX/\cM}$. 
\end{lemma}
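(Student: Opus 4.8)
The statement is local and essentially formal: it is the combination of the description of first-order relative differential operators via the first infinitesimal neighbourhood of the diagonal with relative duality along one of the projections. I would prove it by producing the two outer vertical isomorphisms of (\ref{diagonals}) explicitly, checking that $\delta$ is well defined and of order $\le 1$, verifying that the two squares commute, and concluding with the five lemma.

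First I would build the outer isomorphisms. On the curve fibration $\cX\times_\cM\cX\to\cM$ the diagonal $\Delta\cong\cX$ is a relative effective Cartier divisor with canonical normal bundle $N_\Delta\cong T_{\cX/\cM}$, so restriction to $\Delta$ gives $\cO_{\cX\times_\cM\cX}(\Delta)|_\Delta\cong T_{\cX/\cM}$ and $\cO_{\cX\times_\cM\cX}(2\Delta)|_\Delta\cong T_{\cX/\cM}^{\otimes 2}$. Twisting the conormal sequences and using $p_i^\ast\cE|_\Delta\cong\cE$, $p_2^\ast K_{\cX/\cM}|_\Delta\cong K_{\cX/\cM}$, one gets canonical identifications
\[
 \frac{\cE\boxtimes\cE^\circ(\Delta)}{\cE\boxtimes\cE^\circ}\;\cong\;\End(\cE)\otimes K_{\cX/\cM}\otimes T_{\cX/\cM}\;\cong\;\End(\cE),
\]
\[
 \frac{\cE\boxtimes\cE^\circ(2\Delta)}{\cE\boxtimes\cE^\circ(\Delta)}\;\cong\;\End(\cE)\otimes K_{\cX/\cM}\otimes T_{\cX/\cM}^{\otimes2}\;\cong\;T_{\cX/\cM}\otimes\End(\cE),
\]
contracting in each case one copy of $T_{\cX/\cM}$ against $K_{\cX/\cM}$ via the canonical pairing $K_{\cX/\cM}\otimes T_{\cX/\cM}\cong\cO_\cX$. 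These, being isomorphisms by construction, are the left and right vertical arrows.

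Next I would check $\delta$. For local sections $\psi$ of $\cE\boxtimes\cE^\circ(2\Delta)$ and $e$ of $\cE$, the contraction $\langle\psi,p_2^\ast e\rangle$ is a local section of $p_1^\ast\cE\otimes p_2^\ast K_{\cX/\cM}(2\Delta)$, and $\operatorname{Res}^2$ of it — which annihilates regular forms — is a local section of $\cE$ (sheaves on $\Delta$ being identified with sheaves on $\cX$); hence $\delta(\psi)$ depends only on the class of $\psi$ modulo $\cE\boxtimes\cE^\circ$. In a fibre coordinate $z$ with $z_i=p_i^\ast z$, writing $\psi\equiv a(z_1,z_2)(z_1-z_2)^{-2}\,dz_2$ with $a$ an $\End(\cE)$-valued local function, a one-line Laurent expansion gives
\[
 \delta(\psi)(e)\;=\;a|_\Delta\cdot\partial_z e\;+\;(\partial_{z_2}a)|_\Delta\cdot e ,
\]
so $\delta(\psi)\in\diffone_{\cX/\cM}(\cE)$ with $\sigma_1(\delta(\psi))=a|_\Delta\otimes\partial_z$, and $\delta$ is $\cO_\cX$-linear for the module structure coming from $p_1$. (Conceptually this is relative duality: $\diffone_{\cX/\cM}(\cE)=\mathcal{H}om_{\cO_\cX}(\mathcal{P}^1_{\cX/\cM}(\cE),\cE)$ with $\mathcal{P}^1_{\cX/\cM}(\cE)=p_{1\ast}\!\left(p_2^\ast\cE\otimes\cO/\cI_\Delta^2\right)$; applying $\mathcal{H}om_{\cO_\cX}(-,\cE)$ and Grothendieck duality along $p_1$, whose relative dualizing sheaf is $p_2^\ast K_{\cX/\cM}$ and whose trace is $\operatorname{Res}^2$, identifies it with $p_{1\ast}\tfrac{\cE\boxtimes\cE^\circ(2\Delta)}{\cE\boxtimes\cE^\circ}\cong\tfrac{\cE\boxtimes\cE^\circ(2\Delta)}{\cE\boxtimes\cE^\circ}$, and the $2\Delta/\Delta$ filtration produces the whole diagram.)

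Finally I would verify the two squares. The right one is precisely the symbol formula above: the leading polar coefficient of $\psi$, pushed through the right vertical isomorphism and the contraction with $K_{\cX/\cM}$, is $a|_\Delta\otimes\partial_z=\sigma_1(\delta(\psi))$. For the left one, a section $\psi$ of $\cE\boxtimes\cE^\circ(\Delta)$ has $a|_\Delta=0$, so $\delta(\psi)$ is the zeroth-order operator $e\mapsto(\partial_{z_2}a)|_\Delta\cdot e$, and $(\partial_{z_2}a)|_\Delta$ is, up to the conventional sign, the residue of $\psi$ along $\Delta$, i.e.\ the image of $\psi$ under the left vertical identification. Since the bottom row is the (exact) symbol sequence for $\diffone_{\cX/\cM}(\cE)$ and both outer arrows are isomorphisms, the five lemma shows $\delta$ is an isomorphism. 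The only genuine effort is bookkeeping: pinning down the signs in $\cO(\Delta)|_\Delta\cong N_\Delta\cong T_{\cX/\cM}$ and the normalisation of $\operatorname{Res}^2$ so that both squares commute on the nose rather than merely up to sign; once fixed, everything is formal.
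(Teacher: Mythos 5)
Your proof is correct, and it is exactly the verification the paper has in mind: the paper gives no argument beyond saying the lemma "is easily deduced from the definition of the pole at $\Delta$ map" (citing Beilinson--Schechtman and Esnault--Tsai), and your restriction-to-the-diagonal identifications plus the local Laurent/residue computation of $\delta(\psi)(e)=a|_\Delta\cdot\partial_z e+(\partial_{z_2}a)|_\Delta\cdot e$ is precisely that standard deduction. The only remaining point, as you note, is fixing the sign conventions in $\cO(\Delta)|_\Delta\cong N_\Delta\cong T_{\cX/\cM}$, which is harmless since the outer isomorphisms in the statement are not pinned down beyond making the squares commute.
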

We consider now the natural exact sequence
\begin{equation}\label{diagonals2}
\begin{tikzcd}[row sep=tiny] 
0 \ar[r] & \frac{\cE \boxtimes \cE^\circ}{\cE \boxtimes \cE^\circ (-\Delta)} \ar[d, equal] \ar[r] & \frac{\cE \boxtimes \cE^\circ(2\Delta)}
{\cE\boxtimes \cE^\circ (-\Delta)} \ar[r] & \frac{\cE \boxtimes \cE^\circ(2\Delta)}{\cE \boxtimes \cE^\circ} \ar[r] \ar[d, equal] & 0\\
& \cE nd(\cE) \otimes K_{\cX/\cM} & & \diffone_{\cX/\cM}(\cE). & \\
\end{tikzcd}
\end{equation}
Then the construction that defines the short exact sequence (\ref{at1}) is obtained by taking first the pull-back of (\ref{diagonals2}) to $\atalg_{\cX/\cM}(\cE)\subset \diffone_{\cX/\cM}(\cE)$, and then the push-out 
under the trace map $\cE nd(\cE)\otimes K_{\cX/\cM} \stackrel{\operatorname{Tr}}{\to} K_{\cX/\cM}$,
\begin{equation}\label{diag_A-1}
  \begin{tikzcd}[row sep=small]
    0 \arrow[r] & \frac{\cE \boxtimes \cE^\circ}{\cE \boxtimes 
    \cE^\circ (-\Delta)} \arrow[r] & \frac{\cE \boxtimes \cE^\circ(2\Delta)}{\cE \boxtimes \cE^\circ (-\Delta)} \arrow[r]  & \frac{\cE \boxtimes \cE^\circ(2\Delta)}{\cE \boxtimes \cE^\circ} \arrow[r] & 0 \\
    0 \arrow[r]& \cE nd(\cE) \otimes K_{\cX/\cM} \arrow[r] \arrow[d, "\operatorname{Tr}"]\ar[u, equal] & \tensor*[^{\operatorname{tr}\!\!}]{\widetilde{\cA}}{^{-1}}(\cE) \arrow[r] \arrow[d] \ar[u]& \atalg_{\cX/\cM}(\cE) \arrow[r] \arrow[d, equal]  \ar[u]& 0 \\
    0 \arrow[r] & K_{\cX/\cM} \arrow[r] & \tensor*[^{\operatorname{tr}\!\!}]{\cA}{^{-1}}(\cE) \arrow[r] & \cA_{\cX/\cM}(\cE) \arrow[r] & 0.
  \end{tikzcd}
\end{equation}
\subsection{The quasi-isomorphic Bloch--Esnault complex $\cB^\bullet$}\label{sectiononBE}
Following \cite{bloch.esnault:2002}, we will now construct a subcomplex $\cB^\bullet(\cE) \subset \tensor*[^{\operatorname{tr}\!\! }]{\cA}{^\bullet}(\cE)$ that allows for more handy computations. 
Its construction relies on 
the existence of a splitting of the short exact sequence
\begin{equation}\label{TfS_ses}
  \begin{tikzcd}
    0 \arrow[r] & T_{\cX/\cM} \arrow[r] & T_{f/S} \arrow[r, "df"] & f^{-1} T_{\cM/S} \arrow[r] \arrow[l, dashed, bend left=30] & 0.
  \end{tikzcd}
\end{equation}
\begin{remark}\label{fibredproduct}Note that this condition is in particular satisfied whenever $\cX$ is a fibered product $\cX = \cY \times_{S} \cM$ and $f = \pi_2$ the projection, since then $T_{\cX/S} \cong \pi_1^\ast T_{\cY/S} \oplus \pi_2^\ast T_{\cM/S}$ and in particular
\[
   T_{f/S} \cong \pi_1^\ast T_{\cY/S} \oplus f^{-1} T_{\cM/S} .
\]\end{remark}
\subsubsection{Construction of $\cB^\bullet(\cE)$} The definition of $\cB^{-1}(\cE)$ is analogous to that of $\tensor*[^{\operatorname{tr}\!\! }]{\cA}{^{-1}}(\cE)$ via the sub-quotient (\ref{diag_A-1}). One starts once again from the short exact sequence (\ref{diagonals2}), but pulls it back all the way to $\cE nd(\cE) \hookrightarrow \diffone_{\cX/\cM}(\cE)$, and then pushes out along the trace
\begin{equation}\label{diag_B-1}
  \begin{tikzcd}[row sep=small]
    0 \arrow[r] & \frac{\cE\boxtimes \cE^\circ}{\cE\boxtimes \cE^\circ (-\Delta)} \arrow[r] & \frac{\cE\boxtimes \cE^\circ(2\Delta)}{\cE\boxtimes \cE^\circ (-\Delta)} \arrow[r]  & \frac{\cE\boxtimes \cE^\circ(2\Delta)}{\cE\boxtimes \cE^\circ} \arrow[r] & 0 \\
    0 \arrow[r]& \cE nd(\cE) \otimes K_{\cX/\cM} \arrow[r] \arrow[d, "\operatorname{Tr}"]\ar[u, equal] & \widetilde{\cB}^{-1} \arrow[r] \arrow[d] \ar[u]& \cE nd(\cE) \arrow[r] \arrow[d, equal]  \ar[u]& 0 \\
    0 \arrow[r] & K_{\cX/\cM} \arrow[r] & \cB^{-1} \arrow[r] & \cE nd(\cE) \arrow[r] & 0.
  \end{tikzcd}
\end{equation}
Similarly, we define $\cB^0(\cE)$ via the pull-back of the symbol exact sequence of $\tensor*[^{\operatorname{tr}\!\! }]{\cA}{^0}(\cE) = \cA_{f/S}(\cE)$ under the inclusion $f^{-1} T_{\cM/S} \hookrightarrow T_{f/S}$ arising through the splitting condition on (\ref{TfS_ses}), so that we have the following diagram
\begin{equation*}
  \begin{tikzcd}[row sep=small]
   0 \ar[r] & \cE nd(\cE) \ar[d, equals] \ar[r] & \cB^0(\cE) \ar[d] \ar[r] & f^{-1} T_{\cM/S} \ar[d] \ar[r] & 0\\
   0 \ar[r] & \cE nd(\cE) \ar[r] & \tensor*[^{\operatorname{tr}\!\! }]{\cA}{^0} = \cA_{f/S}(\cE) \ar[r] & T_{f/S} \ar[r] & 0.
 \end{tikzcd}
\end{equation*}
Hence $\cB^\bullet(\cE)$ is a subcomplex of $\tensor*[^{\operatorname{tr}\!\! }]{\cA}{^{\bullet}}(\cE)$, and the following holds true.
\begin{proposition}[{\cite[Sect. 5.2]{bloch.esnault:2002}}]
\label{quasiiso}
If the short exact sequence (\ref{TfS_ses}) is split, the complex $\cB^\bullet(\cE)$ is quasi-isomorphic to $\tensor*[^{\operatorname{tr}\!\! }]{\cA}{^{\bullet}}(\cE)$.
\end{proposition}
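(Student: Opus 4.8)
The plan is to prove the statement by showing that the quotient complex $\mathcal{Q}^\bullet := \tensor*[^{\operatorname{tr}\!\! }]{\cA}{^\bullet}(\cE)\big/\cB^\bullet(\cE)$ is acyclic. Since $\cB^\bullet(\cE)$ is a subcomplex of $\tensor*[^{\operatorname{tr}\!\! }]{\cA}{^\bullet}(\cE)$, this is equivalent to the inclusion inducing isomorphisms on all cohomology sheaves, i.e. to the asserted quasi-isomorphism; equivalently, one computes $\mathcal{Q}^\bullet$ term by term and checks that its single differential is an isomorphism.

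First I would identify the terms of $\mathcal{Q}^\bullet$. The two complexes coincide in degree $-2$ (both equal $\cO_{\cX}$, with differential $d_{\cX/\cM}$ whose image $K_{\cX/\cM}$ lies in $\cB^{-1}(\cE)$), so $\mathcal{Q}^{-2}=0$ and $\mathcal{Q}^\bullet$ is concentrated in degrees $-1$ and $0$. In degree $-1$, the defining short exact sequence (\ref{at1}) of $\tensor*[^{\operatorname{tr}\!\! }]{\cA}{^{-1}}(\cE)$ and the bottom row of (\ref{diag_B-1}) for $\cB^{-1}(\cE)$ are both extensions by one and the same subsheaf $K_{\cX/\cM}$, and the inclusion $\cB^{-1}(\cE)\hookrightarrow\tensor*[^{\operatorname{tr}\!\! }]{\cA}{^{-1}}(\cE)$ restricts to the identity on $K_{\cX/\cM}$ while inducing the natural inclusion $\End(\cE)\hookrightarrow\cA_{\cX/\cM}(\cE)$ on cokernels; a short diagram chase then yields $\mathcal{Q}^{-1}\cong\cA_{\cX/\cM}(\cE)\big/\End(\cE)\cong T_{\cX/\cM}$. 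In degree $0$, by construction $\cB^0(\cE)$ is the pull-back of $\sigma_1\colon\cA_{f/S}(\cE)=\tensor*[^{\operatorname{tr}\!\! }]{\cA}{^0}(\cE)\to T_{f/S}$ along the chosen section $f^{-1}T_{\cM/S}\hookrightarrow T_{f/S}$ of (\ref{TfS_ses}), so $\mathcal{Q}^0\cong T_{f/S}\big/f^{-1}T_{\cM/S}$; it is precisely here that the splitting hypothesis is used, since it furnishes a canonical identification $T_{f/S}\big/f^{-1}T_{\cM/S}\cong T_{\cX/\cM}$.

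Next I would trace the differential through these identifications and check that the induced map $\bar d\colon\mathcal{Q}^{-1}=T_{\cX/\cM}\to\mathcal{Q}^0=T_{\cX/\cM}$ is the identity. By the diagram (\ref{3cpxes}), the differential $\tensor*[^{\operatorname{tr}\!\! }]{\cA}{^{-1}}(\cE)\to\tensor*[^{\operatorname{tr}\!\! }]{\cA}{^0}(\cE)$ is $\operatorname{res}$ followed by the inclusion $\cA_{\cX/\cM}(\cE)\hookrightarrow\cA_{f/S}(\cE)$. A local section $v$ of $T_{\cX/\cM}$, regarded as a class of $\mathcal{Q}^{-1}$, lifts to some $a\in\cA_{\cX/\cM}(\cE)$ with $\sigma_1(a)=v$, and its image in $\mathcal{Q}^0\cong T_{f/S}/f^{-1}T_{\cM/S}\cong T_{\cX/\cM}$ is again $v$. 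Hence $\mathcal{Q}^\bullet=\bigl[\,T_{\cX/\cM}\xrightarrow{\ \id\ }T_{\cX/\cM}\,\bigr]$ placed in degrees $-1$ and $0$, which is acyclic, and the proof is complete.

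The step I expect to require the most care is verifying that the pull-back and push-out operations in the definitions of $\cB^{-1}(\cE)$ (via (\ref{diag_B-1})) and $\cB^0(\cE)$ are mutually compatible with the differentials of $\tensor*[^{\operatorname{tr}\!\! }]{\cA}{^\bullet}(\cE)$ — so that $\cB^\bullet(\cE)$ is genuinely a subcomplex and, more delicately, so that $\bar d$ comes out to be exactly $\id$ rather than some other endomorphism of $T_{\cX/\cM}$. In practice this is a local computation using the ``pole at $\Delta$'' descriptions of the residue maps in (\ref{diagonals})--(\ref{diag_A-1}) together with the explicit construction of $\cB^0(\cE)$; the remaining steps are formal.
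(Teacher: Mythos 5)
Your proof is correct, and it is the standard argument: the paper itself offers no proof of this proposition (it defers to Bloch--Esnault, Sect.\ 5.2), and the intended justification is exactly your computation that the quotient complex is $T_{\cX/\cM}\to T_{\cX/\cM}$ in degrees $-1$ and $0$ with the differential an isomorphism (the splitting of (\ref{TfS_ses}) being what identifies $T_{f/S}/f^{-1}T_{\cM/S}$ with $T_{\cX/\cM}$), hence acyclic. Note only that you do not actually need $\bar d=\id$ on the nose -- any isomorphism suffices -- and that the subcomplex property is already asserted in the text preceding the proposition, so the only substantive content is your identification of the graded quotients and of the induced differential, which you carry out correctly.
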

\begin{corollary}
The short exact sequence of complexes (\ref{3cpxes}) is quasi-isomorphic to
\[
\begin{tikzcd}[row sep=small]
& \mathcal{O}_{\cX} \ar[r, equals] \ar[d, "d_{\cX/\cM}"]& \cB^{-2}(\cE) \ar[d]& & \\ 0\ar[r] & K_{\cX/\cM} 
\ar[r]&  \cB^{-1}(\cE) \ar[r] \ar[d]& \End(\cE) \ar[r] \ar[d] & 0 \\
& & \cB^{0}(\cE) \ar[r, equals] & \cB^{0}(\cE) . &
\end{tikzcd}
\]
\end{corollary}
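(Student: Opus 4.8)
The plan is to deduce the statement formally from Proposition~\ref{quasiiso}, after checking that the natural inclusion $\cB^\bullet(\cE) \hookrightarrow \tensor*[^{\operatorname{tr}\!\! }]{\cA}{^\bullet}(\cE)$ is compatible with the three-column (short exact sequence of complexes) structure of (\ref{3cpxes}). First I would single out the left-hand column of both (\ref{3cpxes}) and (\ref{3cpxesB}), namely the two-term complex $\{\cO_{\cX} \xrightarrow{d_{\cX/\cM}} K_{\cX/\cM}\}$ placed in degrees $-2$ and $-1$ (that is, $\Omega^\bullet_{\cX/\cM}[2]$), and argue that it is a common subcomplex of $\cB^\bullet(\cE)$ and $\tensor*[^{\operatorname{tr}\!\! }]{\cA}{^\bullet}(\cE)$ on which the Bloch--Esnault inclusion restricts to the identity. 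In degree $-2$ this is immediate, since $\cB^{-2}(\cE) = \cO_{\cX} = \tensor*[^{\operatorname{tr}\!\! }]{\cA}{^{-2}}(\cE)$; in degree $-1$ it follows by inspecting the two sub-quotient constructions (\ref{diag_A-1}) and (\ref{diag_B-1}), which both realise $K_{\cX/\cM}$ as the push-out of $\End(\cE) \otimes K_{\cX/\cM}$ along $\operatorname{Tr}$ in a compatible way, so that $\cB^{-1}(\cE) \hookrightarrow \tensor*[^{\operatorname{tr}\!\! }]{\cA}{^{-1}}(\cE)$ is the identity on the subsheaf $K_{\cX/\cM}$; and the differential leaving degree $-1$ kills $K_{\cX/\cM}$ in both complexes, as it factors through $\operatorname{res}$.

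Next I would identify the quotient complexes. The bottom row of (\ref{diag_A-1}), which is precisely the exact sequence (\ref{at1}), identifies $\tensor*[^{\operatorname{tr}\!\! }]{\cA}{^\bullet}(\cE)/\Omega^\bullet_{\cX/\cM}[2]$ with the two-term complex $\{\cA_{\cX/\cM}(\cE) \to \cA_{f/S}(\cE)\}$ in degrees $-1,0$, i.e.\ the right-hand column of (\ref{3cpxes}); likewise the bottom row of (\ref{diag_B-1}) identifies $\cB^\bullet(\cE)/\Omega^\bullet_{\cX/\cM}[2]$ with $\{\End(\cE) \to \cB^0(\cE)\}$, the right-hand column of (\ref{3cpxesB}). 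Together with the first step, this exhibits the inclusion $\cB^\bullet(\cE) \hookrightarrow \tensor*[^{\operatorname{tr}\!\! }]{\cA}{^\bullet}(\cE)$ as a morphism of short exact sequences of complexes from (\ref{3cpxesB}) to (\ref{3cpxes}) that is the identity on the left column and the Bloch--Esnault inclusion on the middle column.

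Finally I would conclude by pure homological algebra. By Proposition~\ref{quasiiso}, whose hypothesis that (\ref{TfS_ses}) splits is satisfied here because $\cX = \calC \times_S \cM$ is a fibre product (cf.\ Remark~\ref{fibredproduct}), the middle-column map is a quasi-isomorphism, while the left-column map is the identity; comparing the long exact cohomology sequences attached to the two short exact sequences of complexes and applying the five lemma then forces the induced map on the right-hand columns, $\{\End(\cE) \to \cB^0(\cE)\} \to \{\cA_{\cX/\cM}(\cE) \to \cA_{f/S}(\cE)\}$, to be a quasi-isomorphism as well. Hence all three column maps are quasi-isomorphisms, which is the assertion. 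I do not expect a genuine obstacle: the only delicate point is the bookkeeping in the first step --- namely making sure that the morphisms in (\ref{diag_A-1}) and (\ref{diag_B-1}) really match up, so that $\Omega^\bullet_{\cX/\cM}[2]$ is a common subcomplex and the Bloch--Esnault map restricts to the identity on it --- after which everything reduces to the routine five-lemma argument.
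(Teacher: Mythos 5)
Your proposal is correct and is precisely the argument the paper intends: the corollary is stated without proof as an immediate consequence of Proposition \ref{quasiiso}, the point being exactly that the Bloch--Esnault inclusion restricts to the identity on the common subcomplex $\Omega^\bullet_{\cX/\cM}[2]$ and identifies the quotient complexes with the right-hand columns via (\ref{diag_A-1}) and (\ref{diag_B-1}), so the five lemma upgrades the middle quasi-isomorphism to all three columns. Your check that the splitting hypothesis on (\ref{TfS_ses}) holds because $\cX=\calC\times_S\cM$ is a fibre product (Remark \ref{fibredproduct}) is the right justification for invoking the proposition in this setting.
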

Moreover, since we are considering only $0^{th}$ direct images, we can drop the degree $-2$ part of the first two complexes. Hence we obtain a short exact sequence of complexes,
$$0 \to K_{\cX/\cM}[1] \to \cB^\bullet(\cE) \to \mathcal{C}^\bullet(\cE) \to 0,$$
where $\mathcal{C}^{-1}(\cE) := \cE nd(\cE)$ and $\mathcal{C}^0(\cE):= \cB^0(\cE)$. We also observe that $\mathcal{C}^\bullet(\cE)$ is quasi-isomorphic to $f^{-1}T_{\cM/S}$ since this is exactly the cokernel of $\cE nd (\cE) \to \cB^0(\cE)$. Thus Theorem \ref{main1} now simplifies to
\begin{theorem}\label{easyBS}
We have an isomorphism of short exact sequences
\begin{equation*}
 \begin{tikzcd}[column sep=3ex, row sep=small]
   0 \ar[r] & R^0f_*(K_{\cX/\cM}[1]) \ar[d, "\cong"] \ar[r] & R^0f_*(\cB^\bullet(\cE)) \ar[d, "\cong"] \ar[r] & R^0f_*(\cE nd(\cE) \to \cB^0(\cE)) \cong T_{\cM/S} \ar[d, "\cong"] \ar[r] &0\\
0 \ar[r] & \cO_\cM \ar[r] & \cA_{\cM/S}(\lambda(\cE)) \ar[r] & T_{\cM/S} \ar[r] & 0.
 \end{tikzcd}
\end{equation*}
\end{theorem}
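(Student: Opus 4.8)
The strategy is to deduce Theorem~\ref{easyBS} from the Beilinson--Schechtman result Theorem~\ref{main1} by transporting it along the quasi-isomorphism of Proposition~\ref{quasiiso}. The hypothesis of that proposition is satisfied here: we apply the trace complex with $\cX = \calC\times_S\cM$ and $f$ the projection (more generally, whenever $\cX$ is a fibered product over $S$ with $f$ a projection), so the sequence (\ref{TfS_ses}) is canonically split by Remark~\ref{fibredproduct}, and hence the inclusion of complexes $\cB^{\bullet}(\cE)\hookrightarrow\tensor*[^{\operatorname{tr}\!\!}]{\cA}{^\bullet}(\cE)$ is a quasi-isomorphism.

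Next I would verify that this inclusion is a morphism of the full short exact sequences of complexes, and not merely of the middle complexes. Comparing (\ref{3cpxesB}) with (\ref{3cpxes}) and unwinding the sub-quotient descriptions (\ref{diag_A-1}), (\ref{diag_B-1}) together with the definition of $\cB^0(\cE)$ from Section~\ref{sectiononBE}, the inclusion restricts to the identity on the sub-complex $\Omega^\bullet_{\cX/\cM}[2]$ and descends to the termwise inclusion $\mathcal{C}^\bullet(\cE) = \{\End(\cE)\hookrightarrow\cB^0(\cE)\} \longrightarrow \{\cA_{\cX/\cM}(\cE)\hookrightarrow\cA_{f/S}(\cE)\}[1]$ of quotient complexes, induced by $\End(\cE)\subset\cA_{\cX/\cM}(\cE)$ and $\cB^0(\cE)\subset\cA_{f/S}(\cE)$. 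Discarding the degree $-2$ term, harmless for $R^0f_*$ as already noted, we obtain a morphism of short exact sequences of two-term complexes whose left column is the identity of $K_{\cX/\cM}[1]$ and whose middle column is the quasi-isomorphism of Proposition~\ref{quasiiso}; the right column is then a quasi-isomorphism as well, by the five lemma applied to the long exact sequences of cohomology sheaves (concretely, both quotient complexes have $\mathcal{H}^{-1}=0$ and $\mathcal{H}^0\cong f^{-1}T_{\cM/S}$, with the map the identity there).

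Applying $R^0f_*$ to this morphism of short exact sequences of complexes yields a morphism of the associated long exact sequences of higher direct images in which all three relevant arrows are isomorphisms. By Theorem~\ref{main1} the lower long exact sequence is the relative Atiyah sequence $0\to\cO_\cM\to\cA_{\cM/S}(\lambda(\cE))\to T_{\cM/S}\to 0$; in particular it is short exact, hence so is the upper one, and the two are isomorphic. (The truncation to a short exact sequence is anyway automatic from Theorem~\ref{main1}, or directly: $R^{\geq 2}f_*=0$ on the curve fibres kills the outgoing connecting map, and the incoming one comes from $R^{-1}f_*\mathcal{C}^\bullet(\cE)$, which vanishes because $\mathcal{C}^\bullet(\cE)$ has cohomology only in degree~$0$.) It remains to name the flanking terms: $K_{\cX/\cM}[1]$ is $K_{\cX/\cM}$ placed in degree $-1$, so $R^0f_*(K_{\cX/\cM}[1]) = R^1f_*K_{\cX/\cM}\cong\cO_\cM$ by the relative trace isomorphism for a smooth proper family of curves (valid in arbitrary characteristic); and $R^0f_*\mathcal{C}^\bullet(\cE)\cong R^0f_*\big(\{\cA_{\cX/\cM}(\cE)\hookrightarrow\cA_{f/S}(\cE)\}[1]\big)\cong T_{\cM/S}$ by the quasi-isomorphism above and Theorem~\ref{main1} (equivalently, $\mathcal{C}^\bullet(\cE)$ is quasi-isomorphic to $\cB^0(\cE)/\End(\cE)\cong f^{-1}T_{\cM/S}$ in degree $0$, and $f_*f^{-1}T_{\cM/S} = T_{\cM/S}$ since $f$ has geometrically connected fibres).

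The one genuinely non-formal step is the compatibility checked in the second paragraph: one must trace through the pull-back/push-out constructions (\ref{diag_A-1}) and (\ref{diag_B-1}) and the definition of $\cB^0(\cE)$ to see that the Bloch--Esnault inclusion intertwines the residue, symbol and projection maps, so that it defines a morphism of the short-exact-sequences-of-complexes (\ref{3cpxes}) and (\ref{3cpxesB}) and not just of their middle terms. I expect this to be the main obstacle; the remaining ingredients --- the behaviour of $R^\bullet f_*$ on a short exact sequence of complexes, and the identifications $R^1f_*K_{\cX/\cM}\cong\cO_\cM$ and $f_*f^{-1}T_{\cM/S}= T_{\cM/S}$ --- are standard.
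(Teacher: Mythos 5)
Your argument is correct and follows essentially the same route as the paper, which obtains Theorem \ref{easyBS} directly from Theorem \ref{main1} via the Bloch--Esnault quasi-isomorphism of Proposition \ref{quasiiso}, the compatibility of the short exact sequences of complexes (\ref{3cpxes}) and (\ref{3cpxesB}), the discarding of the degree $-2$ terms, and the identifications $R^1f_*K_{\cX/\cM}\cong\cO_\cM$ and $\mathcal{C}^\bullet(\cE)\simeq f^{-1}T_{\cM/S}$. You merely spell out the routine verifications (morphism of short exact sequences of complexes, five lemma, vanishing of the relevant connecting maps) that the paper leaves implicit.
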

\begin{remark}
We observe that both sides of the central vertical isomorphism depend on $\cE$.
\end{remark}
\subsubsection{Traceless version $\tensor*[^0]{\cB}{^\bullet}(\cE)$ of $\cB^\bullet(\cE)$}\label{rmk_tracelessB} 
As expected, we define the subsheaf $\tensor*[^0]{\cB}{^{-1}}(\cE)\subset\cB^{-1}(\cE)$ via the pull-back of the short exact sequence defining $\cB^{-1}(\cE)$ in (\ref{diag_B-1}) along the inclusion of traceless endomorphisms $\cE nd^0(\cE) \hookrightarrow \cE nd(\cE)$,
\begin{equation*}
 \begin{tikzcd}[row sep=small]
   0 \ar[r] & K_{\cX/\cM} \ar[d, equals] \ar[r] & \tensor*[^0]{\cB}{^{-1}}(\cE) \ar[d] \ar[r] & \cE nd^0(\cE) \ar[d] \ar[r] & 0\\
   0 \ar[r] & K_{\cX/\cM} \ar[r] & \cB^{-1}(\cE) \ar[r] & \cE nd(\cE) \ar[r] & 0.
 \end{tikzcd}
\end{equation*}
As we did before, we introduce also a quotient sheaf $\tensor*[^0]{\cB}{^0}(\cE)$ of $\cB^0(\cE)$, obtained as push-out through $\cE nd(\cE) \rightarrow \cE nd^0(\cE)$, that is 
\begin{equation*}
 \begin{tikzcd}[row sep=small]
   0 \ar[r] & \cE nd(\cE) \ar[d] \ar[r] & {\cB}^0(\cE) \ar[d] \ar[r] & f^{-1}T_{\cM/S} \ar[d, equals] \ar[r] & 0\\
   0 \ar[r] & \cE nd^0(\cE) \ar[r] & \tensor*[^0]{\cB}{^0}(\cE) \ar[r] & f^{-1}T_{\cM/S} \ar[r] & 0.
 \end{tikzcd}
\end{equation*}
\subsubsection{Identification of $\cB^{-1}(\cE)$ and $\tensor*[^{0}]{\cB}{^{-1}}(\cE)$} The duality 
$$\cA_{\cX/\cM}(\cE)^\ast \cong \cB^{-1}(\cE)$$
was already stated in \cite{bloch.esnault:2002}
formula (5.31). We give a proof here, in particular to include a discussion of the traceless case, and to control the necessary restrictions on the characteristic of the ground field.
\begin{theorem}\label{thmdualityB}
There is a canonical identification between the natural short exact sequences
\[
 \begin{tikzcd}[row sep=small]
   0 \arrow[r] & T_{\cX/\cM}^\ast \arrow[r] \arrow[d, equal] & \cA_{\cX/\cM}(\cE)^\ast \arrow[r] \arrow[d, "\cong"] & \cE nd(\cE)^\ast \arrow[r] \arrow[d, "\cong", "- \operatorname{Tr}" swap] & 0 \\
   0 \arrow[r] & K_{\cX/\cM} \arrow[r] & \cB^{-1}(\cE) \arrow[r] & 
   \cE nd(\cE) \arrow[r] & 0
 \end{tikzcd}
\]
There is also a traceless analogue:
\[
 \begin{tikzcd}[row sep=small]
   0 \arrow[r] & T_{\cX/\cM}^\ast \arrow[r] \arrow[d, equal] & \cA^0_{\cX/\cM}(\cE)^\ast \arrow[r] \arrow[d, "\cong"] & \cE nd^0(\cE)^\ast \arrow[r] \arrow[d, "\cong", "- \operatorname{Tr}" swap] & 0 \\
   0 \arrow[r] & K_{\cX/\cM} \arrow[r] & \tensor*[^{0}]{\cB}{^{-1}}
   (\cE) \arrow[r] & \cE nd^0(\cE) \arrow[r] & 0 .
 \end{tikzcd}
\]
\end{theorem}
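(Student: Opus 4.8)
The plan is to produce the isomorphism as the one induced by a canonical residue pairing between $\cB^{-1}(\cE)$ and the relative Atiyah algebroid, and then to read off compatibility with the symbol sequences. I work with the presentations of the relevant sheaves on $\cX\times_\cM\cX$ recorded in the appendix: $\diffone_{\cX/\cM}(\cE)=\tfrac{\cE\boxtimes\cE^\circ(2\Delta)}{\cE\boxtimes\cE^\circ}$ as in (\ref{diagonals}), the sheaf $\widetilde{\cB}^{-1}(\cE)=\tfrac{\cE\boxtimes\cE^\circ(\Delta)}{\cE\boxtimes\cE^\circ(-\Delta)}$ obtained by pulling (\ref{diagonals2}) back to $\cE nd(\cE)\subset\diffone_{\cX/\cM}(\cE)$ — so that, by construction (\ref{diag_B-1}), $\cB^{-1}(\cE)$ is the push-out of $\widetilde{\cB}^{-1}(\cE)$ along $\operatorname{Tr}\colon\cE nd(\cE)\otimes K_{\cX/\cM}\to K_{\cX/\cM}$ — and the residue map $\widetilde{\operatorname{Res}}\colon K_{\cX/\cM}\boxtimes K_{\cX/\cM}(3\Delta)\to\cO_\cX$. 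Dually, I note that $\cA_{\cX/\cM}(\cE)^\ast$ sits in the sequence $0\to T^\ast_{\cX/\cM}\to\cA_{\cX/\cM}(\cE)^\ast\to\cE nd(\cE)^\ast\to0$ obtained by dualizing the (locally split) relative Atiyah sequence, and that $T^\ast_{\cX/\cM}=K_{\cX/\cM}$ since $\cX\to\cM$ has relative dimension one. The analogous remarks apply to $\cA^0_{\cX/\cM}(\cE)$ and $\tensor*[^{0}]{\cB}{^{-1}}(\cE)$, which are the pull-backs of $\cA_{\cX/\cM}(\cE)$, resp. $\cB^{-1}(\cE)$, along $\cE nd^0(\cE)\hookrightarrow\cE nd(\cE)$.

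The core construction is the pairing
\[
  \langle-,-\rangle\colon\ \widetilde{\cB}^{-1}(\cE)\times\diffone_{\cX/\cM}(\cE)\longrightarrow\cO_\cX ,
\]
defined, for local lifts $\psi\in\cE\boxtimes\cE^\circ(\Delta)$ and $\eta\in\cE\boxtimes\cE^\circ(2\Delta)$ of the two arguments, by forming $\psi\otimes\tau^\ast\eta$ (with $\tau$ the transposition of $\cX\times_\cM\cX$), contracting each factor of $\cE$ against the factor of $\cE^\ast$ living on the same projection — this is exactly the operation that at the level of kernels computes the trace of a composition — so as to land in $K_{\cX/\cM}\boxtimes K_{\cX/\cM}(3\Delta)$, and then applying $\widetilde{\operatorname{Res}}$. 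Well-definedness on the stated quotients is immediate from the vanishing of $\widetilde{\operatorname{Res}}$ on $K_{\cX/\cM}\boxtimes K_{\cX/\cM}(\Delta)$. Using that $\widetilde{\operatorname{Res}}$ restricted to $K_{\cX/\cM}\boxtimes K_{\cX/\cM}(2\Delta)$ is restriction to $\Delta$, one moreover computes that on the subsheaf $\cE nd(\cE)\otimes K_{\cX/\cM}\subset\widetilde{\cB}^{-1}(\cE)$ the pairing with $\cA_{\cX/\cM}(\cE)\subset\diffone_{\cX/\cM}(\cE)$ factors through $\operatorname{Tr}$ and through the symbol, so that $\langle-,-\rangle$ descends to a pairing $\cB^{-1}(\cE)\times\cA_{\cX/\cM}(\cE)\to\cO_\cX$; and that this descended pairing induces the canonical pairing between $K_{\cX/\cM}$ and the symbol quotient $T_{\cX/\cM}$ of $\cA_{\cX/\cM}(\cE)$, and, on the endomorphism quotient of $\cB^{-1}(\cE)$, the trace form $(A,B)\mapsto-\operatorname{Tr}(AB)$ (the sign being forced by $\tau^\ast\Delta=\Delta$ together with the normalisation of $\widetilde{\operatorname{Res}}$). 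This pairing is the one underlying formula (5.31) of \cite{bloch.esnault:2002}.

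It then remains to assemble. The descended pairing induces a morphism from the short exact sequence $0\to K_{\cX/\cM}\to\cB^{-1}(\cE)\to\cE nd(\cE)\to0$ into the dual Atiyah sequence, which by the two computations above is the identity on $K_{\cX/\cM}=T^\ast_{\cX/\cM}$ and is $-\operatorname{Tr}$ on the quotient. Since the full trace form $\cE nd(\cE)^\ast\to\cE nd(\cE)$ is nondegenerate in every characteristic — and $\widetilde{\operatorname{Res}}$ is available precisely when $p\neq2$ — the outer vertical maps are isomorphisms, hence the middle one is too by the five lemma; this gives the first assertion. For the traceless version one restricts all of the above along $\cE nd^0(\cE)\hookrightarrow\cE nd(\cE)$: the same residue pairing restricts to $\tensor*[^{0}]{\cB}{^{-1}}(\cE)\times\cA^0_{\cX/\cM}(\cE)\to\cO_\cX$ and induces a morphism of the two short exact sequences, the only new point being that the induced map on the quotients is $-\operatorname{Tr}\colon\cE nd^0(\cE)^\ast\to\cE nd^0(\cE)$, which is an isomorphism exactly when the trace form stays nondegenerate on $\cE nd^0(\cE)$, i.e. when $p\nmid r$; under that hypothesis the five lemma again finishes the proof.

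The step requiring genuine care is the second computation of the middle paragraph: that the residue pairing induces precisely the signed trace form on the endomorphism quotient, with the correct sign. This is a local calculation near $\Delta$ in which one must line up the ``pole at $\Delta$'' description of $\diffone_{\cX/\cM}(\cE)$ and its symbol from (\ref{diagonals}), the definition of $\widetilde{\cB}^{-1}(\cE)$ through (\ref{diagonals2}), and the normalisation $d\widetilde{\operatorname{Res}}=\operatorname{Res}^1-\operatorname{Res}^2$ together with the symmetry of $\widetilde{\operatorname{Res}}$. Once that matching is carried out — and once one has checked (routinely) that the relevant push-outs and pull-backs behave as claimed — everything else is formal diagram chasing plus the five lemma.
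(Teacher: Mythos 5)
Your proposal follows essentially the same route as the paper: the same Beilinson--Schechtman residue pairing $\widetilde{\operatorname{Res}}(\psi_1\cdot{}^t\psi_2)$ between the kernel presentations of $\diffone_{\cX/\cM}(\cE)$ and of the pre-push-out of $\cB^{-1}(\cE)$, the same descent argument using the pull-back/push-out definitions of $\cA_{\cX/\cM}(\cE)$ and $\cB^{-1}(\cE)$, and the same identification of the induced forms (the canonical contraction on $K_{\cX/\cM}\times T_{\cX/\cM}$, $-\operatorname{Tr}$ on the endomorphism parts, nondegenerate on the traceless part exactly when $p\nmid r$), concluding by nondegeneracy/five lemma just as the paper concludes from nondegeneracy of the trace pairing. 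The one step you defer, the local computation near $\Delta$ fixing the sign, is precisely the lemma the paper proves by writing local sections in coordinates $(x,y)$ near the diagonal, and the outcome you state agrees with it.
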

\begin{remark}
Note that the vertical maps on the RHS are given by the opposite of the
isomorphism induced by the trace pairing.
\end{remark}
\begin{proof}
Following \cite[Sect. 2.1.1.3]{beilinson.schechtman:1988}, let us define a pairing
\begin{eqnarray*}
  \cE \boxtimes \cE^\circ (2\Delta) \times \cE \boxtimes \cE^\circ (\Delta) & \to & \cO_{\cX};\\
  (\psi_1,\psi_2) & \mapsto & \widetilde{\Res}(\psi_1\cdot^t\psi_2);
\end{eqnarray*}
where $\prescript{t}{}{\psi_2}$ denotes the transposition of $\psi_2$, that is the pull-back under the map that exchanges the two factors of the fibered product $\cX \times_{\cM} \cX$. This means that $\prescript{t}{}{\psi_2}$ is a section of $\cE^\circ \boxtimes \cE(\Delta)$. Then we observe that the product $\psi_1\cdot^t\psi_2$ is a section of $K_{\cX/\cM}\boxtimes K_{\cX/\cM}(3\Delta)$,
after taking the trace $\operatorname{Tr} : \cE \otimes \cE^\circ
\to K_{\cX/\cM}$ on each factor. Since $\widetilde{\Res}$ is zero on $K_{\cX/\cM}\boxtimes K_{\cX/\cM}(\Delta)$, the pairing descends to a pairing on the quotients
\[
  \langle - , - \rangle : \frac{\cE\boxtimes \cE^\circ (2\Delta)}{\cE\boxtimes \cE^\circ} \times \frac{\cE \boxtimes 
  \cE^\circ (\Delta)}{\cE \boxtimes \cE^\circ (-\Delta)}  \to  
  \cO_{\cX} .
\]
We claim that this pairing is non-degenerate. In order to check this, observe that it is defined on the central terms of the two short exact sequences (\ref{diagonals}) and (\ref{diagonals2}),
\[
   \begin{tikzcd}[row sep=small, column sep=small]
    \cE nd(\cE) \cong \frac{\cE\boxtimes \cE^\circ (\Delta)}
    {\cE\boxtimes \cE^\circ} \ar[d, hook] & \frac{\cE\boxtimes \cE^\circ}{\cE\boxtimes \cE^\circ(-\Delta)}\cong \cE nd(\cE) 
    \otimes K_{\cX/\cM} \ar[d, hook] & \\
    \diffone_{\cX/\cM}(\cE) \cong \frac{\cE \boxtimes \cE^\circ (2\Delta)}{\cE \boxtimes \cE^\circ} \ar[d, swap, "\sigma_1", two heads] \arrow[r, phantom,  "\times"] & \frac{\cE \boxtimes \cE^\circ (\Delta)}{\cE \boxtimes \cE^\circ(-\Delta)} \ar[r, "{\langle - , - \rangle}"] \ar[d, two heads] & \cO_{\cX} \\
    \cE nd(\cE)\otimes T_{\cX/\cM} \cong \frac{\cE \boxtimes 
    \cE^\circ (2\Delta)}{\cE \boxtimes \cE^\circ (\Delta)} & \frac{\cE \boxtimes \cE^\circ (\Delta)}{\cE \boxtimes \cE^\circ}\cong \cE nd(\cE)  &
  \end{tikzcd}
\]
Using the fact that $\widetilde{\Res}$ vanishes on $K_{\cX/\cM}\boxtimes K_{\cX/\cM}(\Delta)$, we note that
the pairing is identically zero when restricted to the product of the kernels $\frac{\cE \boxtimes \cE^\circ (\Delta)}{\cE\boxtimes \cE^\circ} \times \frac{\cE \boxtimes \cE^\circ}{\cE \boxtimes \cE^\circ(-\Delta)}$. Therefore it induces pairings on the products of the kernel of one sequence with the quotient of the other one, that is, on $\cE nd(\cE) \times \cE nd(\cE)$ and $\cE nd(\cE) \otimes T_{\cX/\cM} \times \cE nd(\cE)\otimes K_{\cX/\cM}$. 
\begin{lemma}
The residue pairing $\langle - , - \rangle$ factorizes through the trace pairings
$- \operatorname{Tr}$ on $\cE nd(\cE) \times \cE nd(\cE)$ and
$+ \operatorname{Tr}$  on $\cE nd(\cE) \otimes T_{\cX/\cM} \times \cE nd(\cE)\otimes K_{\cX/\cM}$.
\end{lemma}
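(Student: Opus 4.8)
The plan is to prove the lemma by a direct local computation on $\cX$, which is legitimate because $\widetilde{\Res}$, the transposition, the two trace maps, and the identifications of the short exact sequences (\ref{diagonals}) and (\ref{diagonals2}) are all $\cO_{\cX}$-linear and local. So it suffices to check each factorization after passing to a suitable open set and evaluating on local generators.

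First I would fix a relative local coordinate $t$ on $\cX$ over $\cM$, set $t_1 = p_1^\ast t$ and $t_2 = p_2^\ast t$ so that $\Delta = \{t_1 = t_2\}$, and choose a local frame $\{e_\mu\}$ of $\cE$ with dual frame $\{e_\mu^\ast\}$, so that a local section of $\cE \boxtimes \cE^\circ$ has the shape $\sum_{\mu\nu} a_{\mu\nu}(t_1,t_2)\, e_\mu(t_1) \boxtimes e_\nu^\ast(t_2)\, dt_2$. In these terms the identifications we need become explicit, and I would record them together with their signs once and for all: under the ``pole at $\Delta$'' map $\delta$ a simple-pole section corresponds to minus the restriction of its residue coefficient to the diagonal; a genuine double-pole section $\sum \tfrac{a_{\mu\nu}}{(t_1-t_2)^2} e_\mu(t_1)\boxtimes e_\nu^\ast(t_2)\,dt_2$ has symbol the leading coefficient on $\Delta$ tensored with $\partial_t$; and a regular section restricts on $\Delta$ to the corresponding element of $\cE nd(\cE)\otimes K_{\cX/\cM}$.

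The second step is the computation of the pairing. With $\psi_1, \psi_2$ written as above, transposition turns $\psi_2$ into a section of $\cE^\circ \boxtimes \cE(\Delta)$ with the coordinates swapped, and applying the trace $\cE \otimes \cE^\circ \to K_{\cX/\cM}$ on each factor of $\psi_1 \cdot {}^t\psi_2$ produces the section $\big( \sum_{\mu\nu} a_{\mu\nu}(t_1,t_2)\, b_{\nu\mu}(t_2,t_1) \big)\, dt_1\, dt_2$ of $K_{\cX/\cM} \boxtimes K_{\cX/\cM}(3\Delta)$, whose $\widetilde{\Res}$ is $\langle \psi_1, \psi_2 \rangle$. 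The key observation is that for each of the two induced pairings one may choose representatives — a simple-pole section against a simple-pole section in the first case, a double-pole section against a regular section in the second — so that this product has at most a double pole along $\Delta$; and on $K_{\cX/\cM} \boxtimes K_{\cX/\cM}(2\Delta)$ the map $\widetilde{\Res}$ is, by the properties recalled above, nothing but restriction to the diagonal under $K_{\cX/\cM}\boxtimes K_{\cX/\cM}(2\Delta)|_\Delta \cong \cO_{\cX}$, $\tfrac{dt_1 dt_2}{(t_1-t_2)^2} \mapsto 1$. Feeding in the explicit representatives of $\phi,\psi \in \cE nd(\cE)$ (taking $a_{\mu\nu} = \tfrac{-\phi_{\mu\nu}}{t_1-t_2}$, $b_{\rho\sigma} = \tfrac{-\psi_{\rho\sigma}}{t_1-t_2}$) gives $\langle \phi,\psi\rangle = -\operatorname{Tr}(\phi\psi)$, while the representatives of $\phi \otimes v$ with $v = h\partial_t$ and $\psi\otimes\omega$ with $\omega = f\,dt$ (taking $a_{\mu\nu} = \tfrac{\phi_{\mu\nu}h(t_1)}{(t_1-t_2)^2}$, $b_{\rho\sigma} = \psi_{\rho\sigma}f(t_1)$) give $\langle \phi\otimes v,\psi\otimes\omega\rangle = \operatorname{Tr}(\phi\psi)\,\langle v,\omega\rangle$; independence of the chosen representatives is exactly the vanishing of the pairing on the product of the two kernels established just before the lemma.

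The main obstacle will be purely bookkeeping: keeping the transposition, the two separate trace contractions, and the normalizations of the identifications of (\ref{diagonals})–(\ref{diagonals2}) consistent so that the two signs come out opposite — a $-\operatorname{Tr}$ in the first pairing and a $+\operatorname{Tr}$ in the second. Conceptually this sign flip is forced because $\tfrac{1}{t_1-t_2}$ and $\tfrac{1}{t_2-t_1}$ differ by a sign and this enters the first computation twice (two simple poles, each also carrying a sign from $\delta$) but only once in the second (one double pole, one regular factor); still, one must check that the conventions chosen for $\delta$, for the symbol map, and for $K_{\cX/\cM}\boxtimes K_{\cX/\cM}(2\Delta)|_\Delta \cong \cO_{\cX}$ do not introduce spurious signs, and that the residue pairing really is $\cO_{\cX}$-bilinear on the relevant sub-quotients (which again follows from the vanishing statement preceding the lemma).
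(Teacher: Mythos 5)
Your proposal is correct and follows essentially the same route as the paper's proof: a direct local computation with a relative coordinate and a local frame of $\cE$, writing explicit simple-pole (resp.\ double-pole and regular) representatives and evaluating $\widetilde{\Res}$ via its vanishing on $K_{\cX/\cM}\boxtimes K_{\cX/\cM}(\Delta)$ and restriction to the diagonal on $K_{\cX/\cM}\boxtimes K_{\cX/\cM}(2\Delta)$, with well-definedness coming from the vanishing of the pairing on the product of the kernels. The paper carries out only the $\cE nd(\cE)\times\cE nd(\cE)$ case explicitly and declares the second case similar, so your sketch of both cases (with the signs coming out as $-\operatorname{Tr}$ and $+\operatorname{Tr}$) matches its argument.
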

\begin{proof}
Consider $\psi_1$ a local section of $\frac{\cE\boxtimes \cE^\circ (\Delta)} {\cE\boxtimes \cE^\circ} \subset 
\frac{\cE \boxtimes \cE^\circ (2\Delta)}{\cE \boxtimes \cE^\circ}$ and
$\psi_2$ a local section of $\frac{\cE\boxtimes \cE^\circ (\Delta)} {\cE\boxtimes \cE^\circ(-\Delta)}$. As explained above 
$\langle \psi_1, \psi_2 \rangle$ depends only on
$\langle \psi_1, \overline{\psi_2} \rangle$, where $\overline{\psi_2}$
is the class of $\psi_2$ in $\frac{\cE\boxtimes \cE^\circ (\Delta)} {\cE\boxtimes \cE^\circ}$. It will be enough to do the computations
locally. Choose (as in \cite{esnault.tsai:2000}) a local coordinate
$x$ at a point $p \in \cX$ and let $(x,y)$ be the induced local
coordinate at the point $(p,p) \in \Delta$. Then the local equation 
of $\Delta$ is $x-y = 0$. Let $e_i$ be a local basis of $\cE$ and
$e_j^*$ its dual basis. Then we can write the local sections
$\psi_1$ and $\overline{\psi_2}$ as
$$ \psi_1 = \sum_{i,j} e_i \otimes e_j^* \frac{\alpha_{ij}(x,y-x)}
{y-x}dy \ \  \text{and} \ \  \overline{\psi_2} = \sum_{k,l} 
e_k \otimes e_l^* \frac{\beta_{kl}(x,y-x)}
{y-x}dy $$
for some local regular functions $\alpha_{ij}$ and $\beta_{kl}$.
Then the local sections $\phi_1$ and $\phi_2$ of $\cE nd(\cE)$ associated
to $\psi_1$ and $\overline{\psi_2}$ are given by
$$ \phi_1 = \sum_{i,j}  e_i \otimes e_j^* \alpha_{ij}(x,0) \ \ 
\text{and} \ \ \phi_2 = \sum_{k,l}  e_k \otimes e_l^* \beta_{kl}(x,0).
$$
Then we compute
\begin{eqnarray*}
\langle \psi_1, \overline{\psi_2} \rangle & = & 
\widetilde{\Res}\left( \sum_{ijkl} e_i \otimes e_l^* \cdot e_k \otimes e_j^* \frac{\alpha_{ij}(x,y-x) \beta_{kl}(y,x-y)}{-(x-y)^2}
dxdy\right) \\
 & = & \widetilde{\Res}\left( \sum_{ij} \frac{\alpha_{ij}(x,y-x) \beta_{ji}(y,x-y)}{-(x-y)^2} dxdy \right) \\
 & = & -  \sum_{ij} \alpha_{ij}(x,0) \beta_{ji}(x,0) = -
 \operatorname{Tr}(\phi_1 \phi_2).
\end{eqnarray*}
The computations for the second case are similar.
\end{proof}
Since the trace pairing $ \operatorname{Tr}$ is 
non-degenerate, we deduce from the
above Lemma that the pairing $\langle - , - \rangle$ is also non-degenerate.

Now, we observe that $\cA_{\cX/\cM}(\cE) \subset 
\frac{\cE \boxtimes \cE^\circ(2\Delta)}{\cE \boxtimes \cE^\circ}$ 
and that $\frac{\cE \boxtimes \cE^\circ(\Delta)}{\cE \boxtimes \cE^\circ(-\Delta)}\twoheadrightarrow \cB^{-1}(\cE)$. We want to prove that the restriction $\langle \cA_{\cX/\cM}(\cE), - \rangle$ descends to $\cB^{-1}(\cE)$, but this follows from the definition of $\cA_{\cX/\cM}(\cE)$ by pull-back via $T_{\cX/\cM}\otimes \cE nd(\cE)$ and the definition of $\cB^{-1}(\cE)$ by push-out 
via $\cE nd(\cE)\otimes K_{\cX/\cM} \stackrel{Tr}{\twoheadrightarrow} K_{\cX/\cM}$, and the duality between these two maps. Hence we obtain a non-degenerate pairing
\[
\langle - , - \rangle: \cA_{\cX/\cM}(\cE) \times \cB^{-1}(\cE) \longrightarrow \cO_{\cX}.
\]
The same argument yields non-degeneracy of the traceless version of this pairing
\[
\langle - , - \rangle: \cA_{\cX/\cM}^0(\cE) \times \tensor*[^{0}]{\cB}{^{-1}}(\cE) \longrightarrow \cO_{\cX}.
\]
\end{proof}
\begin{remark}
The duality between $\cA_{\cX/\cM}(\cE)$ and $\cB^{-1}(\cE)$ was constructed by Sun-Tsai in \cite[Lemma 4.11.2]{sun.tsai:2004} using a local description of $\cB^{-1}(\cE)$. Note that their claim involves the Atiyah algebroid $\cA_{\cX/\cM}(\cE^*)$, which is isomorphic to $\cA_{\cX/\cM}(\cE)$ but has opposite extension class. 
\end{remark}

\begin{remark}
We note that

$$\frac{\cE\boxtimes \cE^\circ(\Delta)}{\cE\boxtimes \cE^\circ(-\Delta)}\cong \mathcal{D}^{(1)}_{\cX/\mathcal{M}}(\cE) \otimes K_{\cX/\mathcal{M}}.$$

Thus the pairing $\langle -,- \rangle $ described in the above proof induces a natural isomorphism between $\mathcal{D}^{(1)}_{\cX/\mathcal{M}}(\cE)^*$ and $\mathcal{D}^{(1)}_{\cX/\mathcal{M}}(\cE) \otimes K_{\cX/\mathcal{M}}$.
\end{remark}

\section{The splitting of the adjoint map.}\label{appendixsplitting}
In this appendix we collect some representation-theoretical facts needed in the proof of Prop. \ref{isodirimage}. We will work in the following framework. We will denote by $\cE$ a rank $r$ vector bundle on a smooth algebraic variety $X$ and as usual $\mathcal{E}nd^0(\cE)$ will denote the traceless endomorphisms of $\cE$. We need the characteristic $p$ of the field $\Bbbk$ to be $0$ or not dividing $r$.

First we observe that we have two non-degenerate pairings induced by the trace,
\begin{eqnarray}
\operatorname{Tr} : \cE nd(\cE)\times \cE nd(\cE) & \to & \cO_X , \label{trone}\\
\operatorname{Tr} : \cE nd(\cE nd(\cE)) \times \cE nd(\cE nd(\cE)) & \to & \cO_X , \label{trtwo}
\end{eqnarray}
which allow us to identify $\cE nd (\cE)$ with $\cE nd(\cE)^*$ and $\cE nd(\cE nd(\cE))$ with 
$\cE nd(\cE nd(\cE))^*$. Moreover, we denote by
\begin{eqnarray}
\ad: \cE nd(\cE) & \to & \cE nd(\cE nd(\cE)) \label{defad}\\
\alpha & \mapsto & (\beta \mapsto [\alpha, \beta])  \nonumber
\end{eqnarray}
the $\cO_X$-linear map given by the adjoint, for any local sections $\alpha, \beta$ of $\cE nd(\cE)$.
\begin{lemma} \label{splitting}
Let $\alpha,\beta$ be local sections of the vector bundle $\cE nd(\cE)$. The $\cO_X$-linear map
\begin{eqnarray*}
s:\cE nd(\cE nd(\cE)) \cong \cE nd (\cE)\otimes \cE nd(\cE) & \to & 
\cE nd(\cE)\\
\alpha \otimes \beta & \mapsto & \frac{1}{2r}[\beta,\alpha]
\end{eqnarray*}
satisfies $s \circ \ad(\alpha) = \alpha - \frac{\tr(\alpha)}{r}\Id_{\cE}$, \it i.e. \rm $s$ is a splitting of the restriction of $\ad$ to $\cE nd^0(\cE)$.
\end{lemma}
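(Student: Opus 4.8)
The plan is to reduce the identity to a bracket computation in $\mathfrak{gl}_r$ carried out in an elementary-matrix basis. Since the adjoint map (\ref{defad}), the map $s$, and the identification $\cE nd(\cE nd(\cE))\cong\cE nd(\cE)\otimes\cE nd(\cE)$ induced by (\ref{trone})--(\ref{trtwo}) are all $\cO_X$-linear, it suffices to verify that $s\circ\ad(\alpha)=\alpha-\tfrac{\tr(\alpha)}{r}\Id_\cE$ fibrewise, i.e. with $\cE nd(\cE)$ replaced by $\mathfrak{gl}_r(\Bbbk)$ carrying the trace form $X,Y\mapsto\tr(XY)$; here the hypothesis $p\nmid r$ (and $p\neq 2$) is exactly what makes the scalars $\tfrac1r,\tfrac1{2r}$ and the relevant non-degeneracies available. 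I would first pin down the identification: writing $\Hom(W,W)=W^\ast\otimes W$ and using (\ref{trone}) to identify $W^\ast\cong W$ for $W=\cE nd(\cE)$, a pure tensor $a\otimes b$ becomes the endomorphism $\gamma\mapsto\tr(a\gamma)\,b$; with this convention the map $s$ of the statement sends such a tensor to $\tfrac1{2r}[b,a]$, the commutator of the two factors taken in the opposite order.

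Next I would express $\ad(\alpha)$ in a basis $\{E_{ij}\}$ of elementary matrices. From $\tr(E_{ij}\gamma)=\gamma_{ji}$ one checks directly that
\[
  \ad(\alpha)=\sum_{i,j}E_{ij}\otimes[\alpha,E_{ji}],
\]
because applying the right-hand side to $\gamma$ returns $\sum_{i,j}\gamma_{ji}[\alpha,E_{ji}]=[\alpha,\gamma]$. Hence $s\circ\ad(\alpha)=\tfrac1{2r}\sum_{i,j}\bigl[[\alpha,E_{ji}],E_{ij}\bigr]$, and the heart of the argument is the identity
\[
  \sum_{i,j}\bigl[[\alpha,E_{ji}],E_{ij}\bigr]=2r\,\alpha-2\tr(\alpha)\,\Id .
\]
To prove it one expands $[\alpha,E_{ji}]=\sum_m(\alpha_{mj}E_{mi}-\alpha_{im}E_{jm})$, brackets with $E_{ij}$ using $E_{ab}E_{cd}=\delta_{bc}E_{ad}$ to obtain $\bigl[[\alpha,E_{ji}],E_{ij}\bigr]=\sum_m\alpha_{mj}E_{mj}+\sum_m\alpha_{im}E_{im}-\alpha_{jj}E_{ii}-\alpha_{ii}E_{jj}$, and then sums over $i,j$: the first two terms each collapse, the free index producing a factor $r$, to $r\alpha$, while the last two each yield $-\tr(\alpha)\,\Id$. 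Dividing by $2r$ gives $s\circ\ad(\alpha)=\alpha-\tfrac{\tr(\alpha)}{r}\Id_\cE$; restricting to $\cE nd^0(\cE)$, where $\tr(\alpha)=0$, shows $s$ splits $\ad|_{\cE nd^0(\cE)}$, which is what is needed in Proposition \ref{isodirimage}.

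I do not expect any genuine difficulty; the only point requiring care is the bookkeeping of signs and of the order of the tensor factors through the chain $\Hom(W,W)=W^\ast\otimes W\cong W\otimes W$, which is precisely what fixes the shapes $\tfrac1{2r}[\beta,\alpha]$ and $-\tr(\alpha)/r$ in the statement. As an independent check one may note that $s\circ\ad$ is equivariant for the adjoint $\GL_r$-action, hence (when $p\nmid r$, so that $\cE nd^0(\cE)$ is an irreducible $\GL_r$-module having no trivial sub- or quotient module) it is block-diagonal with respect to $\cE nd(\cE)=\cE nd^0(\cE)\oplus\cO_X\cdot\Id_\cE$ and acts by a scalar on each summand; the scalar on $\cO_X\cdot\Id_\cE$ is $0$ because $\ad(\Id_\cE)=0$, and the scalar on $\cE nd^0(\cE)$ is read off by evaluating on a single element such as $\alpha=E_{12}$.
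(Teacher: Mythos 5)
Your proof is correct and follows essentially the same route as the paper: reduce to a pointwise computation in $\mathrm{M}_r(\Bbbk)$, write $\ad(\alpha)$ in the elementary-matrix basis with its trace-dual basis, and evaluate the resulting sum of double commutators to get $2r\alpha-2\tr(\alpha)\Id$, then divide by $2r$. Your extra care in fixing the convention for the identification $\cE nd(\cE nd(\cE))\cong\cE nd(\cE)\otimes\cE nd(\cE)$ (and the Schur-lemma sanity check) is a nice addition but does not change the argument.
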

\begin{proof}
It will be enough to check the equality pointwise. The statement then reduces to check that for an $r \times r$ matrix $A \in 
\mathrm{M}_r(\Bbbk)$ we have the equality $s \circ \ad (A) =
A - \frac{\tr(A)}{r} I_r$. We consider the canonical basis 
$\{ E_{ij}  \}$ with $1 \leq i,j \leq r$ of $\mathrm{M}_r(\Bbbk)$.
The dual basis of $\{ E_{ij}  \}$ under the trace pairing (\ref{trone})
is given by $\{ E_{ji} \}$. The claim then follows by straightforward 
computation :
\begin{eqnarray*}
s \circ \ad(A) & = & s \left( \sum_{i,j} E_{ji} \otimes [A,E_{ij}]    \right)  = \frac{1}{2r}  \sum_{i,j} AE_{ij}E_{ji} - E_{ij}AE_{ji} -
 E_{ji}AE_{ij} + E_{ji}E_{ij}A \\
 & = & \frac{1}{2r} (2r A - 2\tr(A) I_r). 
\end{eqnarray*}
\end{proof}
\begin{lemma} \label{dualofsplitting}
Using the identifications (\ref{trone}) and  (\ref{trtwo}) given by the trace pairings we denote by $s^* :  \cE nd(\cE) \to  \cE nd(\cE nd(\cE))$
the dual of $s$. Then we have the equality
$$ s^* = \frac{1}{2r} \ad.$$
\end{lemma}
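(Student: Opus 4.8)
The statement is a matching-of-coefficients identity between two $\cO_X$-linear maps, and the plan is to verify it by the same kind of explicit pointwise computation used in the proof of Lemma~\ref{splitting}. Since $s$ and $\ad$ are both $\cO_X$-linear morphisms of vector bundles, it suffices to check the claimed equality over the fibre at an arbitrary point, i.e.\ for $\cE nd(\cE) = \mathrm{M}_r(\Bbbk)$, and by linearity it is enough to evaluate both sides of the defining relation of $s^\ast$ on decomposable elements.

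First I would unwind the meaning of the dual. Under the identifications (\ref{trone}) and (\ref{trtwo}), the map $s^\ast : \cE nd(\cE) \to \cE nd(\cE nd(\cE))$ is characterised by
\[
  \operatorname{Tr}\bigl( s^\ast(\alpha) \circ \Phi \bigr) = \operatorname{Tr}\bigl( \alpha \cdot s(\Phi) \bigr)
\]
for all local sections $\alpha$ of $\cE nd(\cE)$ and $\Phi$ of $\cE nd(\cE nd(\cE))$, the left-hand trace being the pairing (\ref{trtwo}) and the right-hand one (\ref{trone}). Work pointwise with the canonical basis $\{E_{ij}\}$ and its trace-dual $\{E_{ji}\}$: a decomposable element $\Phi = \gamma\otimes\delta$ of $\cE nd(\cE)\otimes\cE nd(\cE)$ corresponds under (\ref{trone}) to the endomorphism $\beta \mapsto \operatorname{Tr}(\gamma\beta)\,\delta$, so that $s(\Phi) = \frac{1}{2r}[\delta,\gamma]$, and, exactly as in the proof of Lemma~\ref{splitting}, $\ad(\alpha) = \sum_{i,j} E_{ji}\otimes[\alpha,E_{ij}]$. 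A short computation with the basis, using $\operatorname{Tr}(E_{ji}\delta)=\delta_{ij}$ and $\sum_{i,j}\delta_{ij}E_{ij}=\delta$, then shows that the pairing (\ref{trtwo}) of $\ad(\alpha)$ with $\gamma\otimes\delta$ equals $\sum_{i,j}\delta_{ij}\,\operatorname{Tr}\bigl([\alpha,E_{ij}]\gamma\bigr) = \operatorname{Tr}\bigl([\alpha,\delta]\gamma\bigr)$.

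It then remains to compare this with the right-hand side $\operatorname{Tr}\bigl(\alpha\cdot s(\gamma\otimes\delta)\bigr) = \frac{1}{2r}\operatorname{Tr}\bigl(\alpha[\delta,\gamma]\bigr)$, and the two agree by cyclicity of the trace, $\operatorname{Tr}([\alpha,\delta]\gamma) = \operatorname{Tr}(\alpha[\delta,\gamma])$. This gives $\operatorname{Tr}\bigl(\frac{1}{2r}\ad(\alpha)\circ\Phi\bigr) = \operatorname{Tr}(\alpha\cdot s(\Phi))$ on decomposables, hence for all $\Phi$, so $s^\ast = \frac{1}{2r}\ad$. The only genuinely delicate point is the bookkeeping of the pairing (\ref{trtwo}) in terms of the tensor decomposition $\cE nd(\cE nd(\cE)) \cong \cE nd(\cE)\otimes\cE nd(\cE)$: one must note that it is the ``swapped'' pairing $\langle\alpha_1\otimes\beta_1,\alpha_2\otimes\beta_2\rangle = \operatorname{Tr}(\alpha_1\beta_2)\operatorname{Tr}(\beta_1\alpha_2)$, not the naive product of the two trace pairings, and keep track of which tensor factor plays which role; once this is fixed everything is a one-line matrix manipulation. (Alternatively, one could bypass (\ref{trtwo}) altogether by computing $\operatorname{Tr}$ of the honest composite endomorphism $\ad(\alpha)\circ\Phi$ of the $r^2$-dimensional space $\cE nd(\cE)$ directly.)
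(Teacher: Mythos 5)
Your proposal is correct and follows essentially the same route as the paper: reduce to a pointwise computation with $r\times r$ matrices, characterise $s^\ast$ by $\operatorname{Tr}(s^\ast(\alpha)\circ\Phi)=\operatorname{Tr}(\alpha\,s(\Phi))$, expand $\ad(\alpha)=\sum_{i,j}E_{ji}\otimes[\alpha,E_{ij}]$ in the canonical basis and its trace-dual, and conclude by cyclicity of the trace; the ``swapped'' form of the pairing (\ref{trtwo}) that you single out is exactly what the paper uses implicitly when it writes $\tr(E_{ji}\otimes[A,E_{ij}]\otimes B\otimes C)=\tr(E_{ji}C)\tr([A,E_{ij}]B)$. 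The only cosmetic difference is that you resum $\sum_{i,j}\delta_{ij}E_{ij}=\delta$ before taking commutators, whereas the paper expands the commutator first, so the two computations are the same identity $\tr(\ad(A)\cdot B\otimes C)=\tr(A[C,B])$ verified in a slightly different order.
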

\begin{proof}
As in the previous lemma we will check the equality pointwise. By the 
definition of the dual map $s^*$ and the trace pairings (\ref{trone}) and
(\ref{trtwo}) it is easily seen that the claimed equality is equivalent to
the equality
$$ \tr (\ad(A). B \otimes C) = \tr (A [C,B])$$
for any matrices $A,B,C \in \mathrm{M}_r(\Bbbk)$. Note that the trace
on the left-hand side is the trace on $End(\mathrm{M}_r(\Bbbk)) \cong \mathrm{M}_r(\Bbbk) \otimes \mathrm{M}_r(\Bbbk)$. Again this equality
is proved by straightforward computation :
\begin{eqnarray*}
\tr(\ad(A). B \otimes C) & = & \sum_{i,j} \tr ( 
E_{ji} \otimes [A,E_{ij}] \otimes B \otimes C ) = \sum_{i,j} \tr(E_{ji} C ) \tr( [A,E_{ij}] B) \\
& = & \sum_{i,j} (\tr(E_{ji} C ) ( \tr (BAE_{ij}) - \tr (E_{ij}AB) )  =  \tr(BAC) - \tr(ABC) \\
& = &  \tr(A[C,B]).
\end{eqnarray*}
\end{proof}
We will also abuse slightly of notation and denote also by $\ad$ the $\cO_X$-linear map 
$\cE nd(\cE) \to  \cE nd(\cE nd^0(\cE))$ induced by the one defined in (\ref{defad}). 
We will write instead $\ad_0:\cE nd^0(\cE) \to  \cE nd^0(\cE nd^0(\cE))$ for the restriction 
to $\cE nd^0(\cE)$.
\begin{proposition}\label{ST310}
\begin{enumerate}[(a)]
\item There exists a $\cO_X$-linear map
$$
\widetilde{\ad}: \cA(\cE) \to \cA(\cE nd^0(\cE)),
$$
extending respectively $\ad$ inducing the identity on $T_X$. Note that $\widetilde{\ad}$ factorizes
through $\cA^0(\cE)$. We shall denote by 
$$
\widetilde{\ad}_0 : \cA^0(\cE) \to \cA(\cE nd^0(\cE))
$$
the factorized map.
\item There exists a $\cO_X$-linear map
$$
\widetilde{s}: \cA(\cE nd^0(\cE)) \to \cA^0(\cE),
$$
extending $s: \cE nd(\cE nd^0(\cE)) \to \cE nd^0(\cE)$, inducing
the identity on $T_X$ and such that $\widetilde{s} \circ \widetilde{\ad}_0 = Id_{\cA^0(\cE)}$.
\item With the notation of Appendix A, there exists a $\cO_\cX$-linear map
$$\widehat{\ad}:{}^0\cB^{-1}(\cE) \to{}^0\cB^{-1}(\cE nd^0(\cE)),$$
lifting $\ad_0$ and inducing $2r \Id$ on the line subbundle $K_{\cX/\cM}$.
\end{enumerate}
\end{proposition}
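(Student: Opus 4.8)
All three maps are built from their obvious counterparts on endomorphism sheaves, the only representation-theoretic input being the Casimir identities of Lemmas~\ref{splitting} and~\ref{dualofsplitting}; in spirit this follows \cite[Prop.~3.10]{sun.tsai:2004}, keeping track of the hypothesis $p\nmid r$ that makes the splitting $s$ and the Casimir available. For part~(1) I would define $\widetilde{\ad}$ by the commutator of operators: for a local section $D$ of $\cA(\cE)$ (a first-order operator $\cE\to\cE$ with scalar symbol $\xi$) and a local section $\psi$ of $\cE nd(\cE)$, put $\widetilde{\ad}(D)(\psi):=D\circ\psi-\psi\circ D$. Since the first-order symbols of $D\circ\psi$ and $\psi\circ D$ agree, the result is $\cO_X$-linear in $\psi$, hence a section of $\cE nd(\cE)$; a one-line Leibniz check gives $\widetilde{\ad}(D)(f\psi)=\xi(f)\psi+f\,\widetilde{\ad}(D)(\psi)$, so $\widetilde{\ad}(D)\in\cA(\cE nd(\cE))$, and $\widetilde{\ad}$ is $\cO_X$-linear because $[fD,\psi]=f[D,\psi]$. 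From $\operatorname{Tr}[D,\psi]=\xi(\operatorname{Tr}\psi)$ the operator preserves $\cE nd^0(\cE)$, giving a map to $\cA(\cE nd^0(\cE))$; it restricts to $\ad$ on $\cE nd(\cE)\subset\cA(\cE)$, induces $\Id$ on $T_X$, and kills $\cO_X\cdot\Id_\cE$ (as $[c\,\Id_\cE,\psi]=0$), hence factors through the quotient $\cA^0(\cE)$ to give $\widetilde{\ad}_0$.

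For part~(2) I would construct $\widetilde s$ locally and glue. On an open $U\subseteq X$, choose an $\cO_X$-linear splitting $\nabla^0$ of $\sigma_1\colon\cA^0(\cE)\to T_X$ over $U$ (such a splitting exists Zariski-locally); every local section $D$ of $\cA(\cE nd^0(\cE))$ with symbol $\xi$ is then uniquely $D=\widetilde{\ad}_0(\nabla^0_\xi)+B$ with $B$ a section of $\cE nd(\cE nd^0(\cE))$, and I set $\widetilde s(D):=\nabla^0_\xi+s(B)$, where $s\colon\cE nd(\cE nd^0(\cE))\to\cE nd^0(\cE)$ is the Casimir splitting of Lemma~\ref{splitting}. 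This is $\cO_X$-linear, and independent of $\nabla^0$: replacing $\nabla^0$ by $\nabla^0+A$ for an $\cE nd^0(\cE)$-valued $1$-form $A$ changes $B$ by $-\ad_0(A(\xi))$, which $s$ undoes since $s\circ\ad_0=\Id$; hence the local definitions glue to a global $\cO_X$-linear map. By construction $\widetilde s$ restricts to $s$ on $\cE nd(\cE nd^0(\cE))$, induces $\Id$ on $T_X$, and $\widetilde s\circ\widetilde{\ad}_0=\Id_{\cA^0(\cE)}$, the last again because $s\circ\ad_0=\Id_{\cE nd^0(\cE)}$ (Lemma~\ref{splitting}, using $p\nmid r$).

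For part~(3) I would pass to the relative setting and dualize. The relative version of the construction in part~(2), restricted to $\cA^0_{\cX/\cM}(\cE nd^0(\cE))\subset\cA_{\cX/\cM}(\cE nd^0(\cE))$, gives an $\cO_\cX$-linear map $\widetilde s'\colon\cA^0_{\cX/\cM}(\cE nd^0(\cE))\to\cA^0_{\cX/\cM}(\cE)$ which is the identity on $T_{\cX/\cM}$ and equals $s$ on $\cE nd^0(\cE nd^0(\cE))$. Dualizing and invoking the traceless duality ${}^0\cB^{-1}(\cF)\cong\cA^0_{\cX/\cM}(\cF)^\ast$ of Theorem~\ref{thmdualityB} produces a map ${}^0\cB^{-1}(\cE)\to{}^0\cB^{-1}(\cE nd^0(\cE))$; rescaling, I set $\widehat{\ad}:=2r\cdot(\widetilde s')^\ast$. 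On the sub-line-bundle $K_{\cX/\cM}$ this acts by $2r\cdot\Id$, since $(\widetilde s')^\ast$ is the identity there (being dual to the identity on the quotient $T_{\cX/\cM}$). On the quotient $\cE nd^0(\cE)\to\cE nd^0(\cE nd^0(\cE))$ it acts, after carrying through the $-\operatorname{Tr}$ identifications of Theorem~\ref{thmdualityB}, by $2r$ times the trace-adjoint $s^\ast$ of $s$, and $s^\ast=\tfrac1{2r}\ad_0$ by Lemma~\ref{dualofsplitting}; hence $\widehat{\ad}$ lifts $\ad_0$, as required.

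\textbf{Main obstacle.} The genuinely delicate point is keeping the numerical factor in part~(3) honest: the exact coefficient $2r$ comes from the Casimir eigenvalue on the adjoint representation of $\mathfrak{sl}_r$ with respect to the \emph{trace} form, and one must push the sign-bearing trace/residue identifications of Theorem~\ref{thmdualityB} (e.g.\ $-\operatorname{Tr}$) correctly through the dualization; throughout one needs $p\nmid r$ so that $s$, the Casimir, and the identification $\mathfrak{sl}_r\cong\mathfrak{pgl}_r$ all make sense. A secondary subtlety is the independence of the locally defined $\widetilde s$ in part~(2) from the auxiliary connection, which is where Lemma~\ref{splitting} enters essentially rather than cosmetically.
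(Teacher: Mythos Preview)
Your proof is correct. Parts~(1) and~(3) follow essentially the same route as the paper: for~(1) you make the commutator construction explicit where the paper simply cites \cite[pp.~188--189]{atiyah:1957}, and for~(3) both you and the paper define $\widehat{\ad}$ as $2r$ times the dual of $\widetilde{s}$ transported through the identifications of Theorem~\ref{thmdualityB}, invoking Lemma~\ref{dualofsplitting} to recognize $\ad_0$ on the quotient.

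The genuine difference lies in part~(2). The paper argues at the level of extension classes: it forms the push-out of the Atiyah sequence of $\cE nd^0(\cE)$ along $s$, and observes that since $s\circ\ad_0=\Id$ the resulting class in $\Ext^1(T_X,\cE nd^0(\cE))$ equals that of $\cA^0(\cE)$, so the two extensions are isomorphic. Your approach instead builds $\widetilde{s}$ by local formulas using an auxiliary splitting $\nabla^0$ and then checks that the outcome is independent of this choice---Lemma~\ref{splitting} entering precisely at the glueing step. Your method has the advantage of exhibiting $\widetilde{s}$ explicitly and of verifying $\widetilde{s}\circ\widetilde{\ad}_0=\Id_{\cA^0(\cE)}$ by direct computation, whereas the paper's argument leaves this identity (and the canonicity of the isomorphism of extensions) somewhat implicit. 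The paper's approach, on the other hand, is more conceptual and avoids any choice of local data. Both are valid and produce the same map.
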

\begin{proof}
Part (a) is proved in \cite{atiyah:1957} pages 188-189. \\
Part (b): We define $\widetilde{s}$ as the push-out of the
exact sequence
$$ 0 \to \cE nd^0 (\cE nd^0(\cE)) \to \cA^0(\cE nd^0(\cE)) \to
T_X \to 0 $$
under the $\cO_X$-linear map $s$. Then, by Lemma \ref{splitting}, since
$s$ is a splitting of $\ad_0$, we see that the extension class of the push-out 
is the same as the extension class of $\cA^0(\cE)$, hence these two vector bundles
are isomorphic (see e.g. \cite{atiyah:1957} pages 188-189). \\
Part (c):  We recall from Theorem \ref{thmdualityB} that there exist
isomorphisms 
$$ \delta_{\cE} : \cA^0_{\cX/\cM}(\cE)^* \to {}^0\cB^{-1}(\cE) \ \ \text{and} \ \ 
 \delta_{\cE  nd^0(\cE)} : \cA^0_{\cX/\cM}(\cE  nd^0(\cE))^* \to {}^0\cB^{-1}(\cE  nd^0(\cE)) $$
We then construct the map $\widehat{\ad}$ as the composition 
$$\widehat{\ad} = (2r) \delta_{\cE  nd^0(\cE)} \circ \widetilde{s}^* \circ \delta_{\cE}^{-1}.$$
Then $\widehat{\ad}$ induces $(2r) \Id$ on $K_{\cX/\cM}$ and, by Lemma \ref{dualofsplitting},
$\widehat{\ad}$ lifts the map $\ad_0$.
\end{proof}
\begin{remark}
Proposition \ref{ST310} coincides with \cite[Prop. 3.10]{sun.tsai:2004}. Our proof is different since we give a global construction of the liftings of the adjoint maps.
\end{remark}

\section{Basic facts about the moduli space $\cM$ through the Hitchin system}\label{appendixbasicfacts}
In this appendix we give proofs for some of the basic facts about the moduli space of stable  bundles $\cM$ (as in Section \ref{sect_basicfacts}) that we use in the main body of the paper.   These are essentially all well known, but we were unable to find references for them in the generality we need (outside the complex case).  We therefore show here how they can all be obtained using the Hitchin system -- a strategy once again due to Hitchin (cfr. \cite[\S 6]{hitchin:1987} and \cite[\S 5]{hitchin:1990}) -- via some minor adaptations to the algebro-geometric setting.
\subsection{The moduli space of Higgs bundles and the Hitchin system} We will denote by $\mathcal{M}^{\higgs, \semis}$ the moduli space of semi-stable Higgs bundles with trivial determinant (and trace-free Higgs field) -- all still relative over $S$ as before.  
This space is singular but normal, and comes equipped with the Hitchin system, a projective morphism $\phi$ to the vector bundle $\pi_{\HH}:\HH\rightarrow S$ associated to the sheaf $\oplus_{i=2}^r \pi_{s *} K_{\calC/S}^i$ over $S$.  This morphism is equivariant with respect to the $\mathbb{G}_m$-action that scales the Higgs fields, and acts with weight $i$ on $\pi_{s *} K^i_{\calC/S}$.
  The fibers of $\pi_{\higgs}:\mathcal{M}^{\higgs, \semis}\rightarrow S$ have a canonical (algebraic) symplectic structure on their smooth locus, which extends the one on $T^*_{\cM/S}$.  Closed points in $\HH$ give rise to degree $r$ spectral covers of $\calC$.  The locus whose spectral curve is smooth is denoted by $\HH^{\operatorname{reg}}$.
\subsection{Proofs}
\begin{proposition}[{Proposition \ref{basicfacts}(\ref{basicfactsthree})}]
There are no global vector fields on $\cM$: $$\pi_{e*}T_{\cM/S}=\{0\}.$$
\end{proposition}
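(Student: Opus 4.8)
The plan is to run Hitchin's argument via the Hitchin system. A local section of $\pi_{e*}T_{\cM/S}$ over an open $U\subseteq S$ is, under the canonical isomorphism $q_*\cO_{T^\ast_{\cM/S}}\cong\Sym^\bullet T_{\cM/S}$ for the projection $q\colon T^\ast_{\cM/S}\to\cM$, the same as a regular function $f$ on the total space of the relative cotangent bundle $T^\ast_{\cM/S}$ (over $\pi_e^{-1}(U)$) which is linear along the fibres of $q$; equivalently, $f$ is regular and homogeneous of weight $1$ for the $\GG_m$-action scaling the cotangent vectors. So it suffices to show that $0$ is the only such function.

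First I would extend $f$ across the Higgs moduli space. A stable bundle with any Higgs field is a stable Higgs bundle and stability of the underlying bundle is an open condition, so $T^\ast_{\cM/S}$ is an open subscheme of $\mathcal{M}^{\higgs,\semis}$ on which the above $\GG_m$-action is the restriction of the one scaling Higgs fields. The complement $\mathcal{M}^{\higgs,\semis}\setminus T^\ast_{\cM/S}$ consists of semistable Higgs bundles whose underlying bundle is not stable; I would use the fact (following \cite[\S6]{hitchin:1987}) that it has codimension at least $2$ in each geometric fibre of $\pi_{\higgs}$, which is exactly where the hypotheses $g\geq 2$ and $(r,g)\neq(2,2)$ are needed. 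Since $\mathcal{M}^{\higgs,\semis}$ is normal, algebraic Hartogs's theorem \cite[Lemma 11.3.11]{vakil:2017} then extends $f$ uniquely, and still $\GG_m$-homogeneously of weight $1$, to a regular function $\tilde f$ on $\mathcal{M}^{\higgs,\semis}$. Next I would push $\tilde f$ down along the Hitchin morphism $\phi\colon\mathcal{M}^{\higgs,\semis}\to\HH$: it is projective, and over the dense open $\HH^{\operatorname{reg}}$ its fibres are torsors under Prym varieties of the spectral covers, hence geometrically connected, so relative Stein factorisation together with normality of $\HH$ (which is smooth over $S$) gives $\phi_*\cO_{\mathcal{M}^{\higgs,\semis}}\cong\cO_{\HH}$. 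Hence $\tilde f=\phi^\ast h$ for a regular function $h$ on $\HH$, again of weight $1$ since $\phi$ is $\GG_m$-equivariant. Finally $\HH$ is the total space of the $\cO_S$-module $\bigoplus_{i=2}^{r}\pi_{s*}K^i_{\calC/S}$, with $\GG_m$ acting with weight $i\geq 2$ on the $i$-th summand, so $\pi_{\HH*}\cO_{\HH}$ is a polynomial $\cO_S$-algebra whose generators all have weight at least $2$; it therefore has no nonzero part of weight $1$, forcing $h=0$, hence $\tilde f=0$, $f=0$, and the section vanishes.

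The main obstacle I anticipate is the codimension estimate on $\mathcal{M}^{\higgs,\semis}\setminus T^\ast_{\cM/S}$ (together with the companion fact that the strictly semistable Higgs locus is also of codimension $\geq 2$): this is where the numerical restrictions on $(r,g)$ genuinely intervene, and where Hitchin's reasoning, originally phrased for a single complex curve, must be transported to our relative, positive-characteristic setting — in practice by checking that input, as well as normality of $\mathcal{M}^{\higgs,\semis}$ and connectedness of the generic Hitchin fibres, on geometric fibres over $S$.
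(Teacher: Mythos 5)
Your proposal is correct and follows essentially the same route as the paper's own argument: interpret the vector field as a weight-one function on $T^\ast_{\cM/S}$, extend it by algebraic Hartogs to $\cM^{\higgs,\semis}$ using the high codimension of the complement, observe it must be pulled back from $\HH$ along the (projective) Hitchin map, and conclude since $\HH$ carries no functions of weight one. The paper states these steps more tersely, while you spell out the justifications (Stein factorisation for $\phi_\ast\cO\cong\cO_{\HH}$, the fibrewise codimension estimate) that the paper leaves implicit.
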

\begin{proof} Elements of $\pi_{e*}T_{\cM/S}$ would give rise to global functions on $T^*_{\cM/S}$.  As the complement of $\cM$ in $\mathcal{M}^{\higgs, \semis}$ has sufficiently high codimension, these would extend by Hartogs's theorem to all of $\mathcal{M}^{\higgs, \semis}$.  As they have weight $1$ under the $\mathbb{G}_m$-action, they have to be pulled-back from functions on $\HH$ of the same weight, but there are no such functions.
\end{proof}
\begin{proposition}\label{rho-Hit-isom}
The Hitchin symbol $\rho^{\Hit}$ is an isomorphism.
\end{proposition}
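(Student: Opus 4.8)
The plan is to identify $\rho^{\Hit}$ with a composite of canonical isomorphisms, all controlled by the Hitchin system on $\cM^{\higgs, \semis}$. Write $\mathrm{pr}: T^\ast_{\cM/S}\to\cM$ for the projection, so that $\pi_{e*}\Sym^2 T_{\cM/S}$ is the weight-$2$ graded piece of $\pi_{e*}\mathrm{pr}_*\mathcal{O}_{T^\ast_{\cM/S}}$, i.e.\ the sheaf of functions on $T^\ast_{\cM/S}$, relative over $S$, that are homogeneous of degree $2$ in the cotangent directions. Denoting the Hitchin morphism by $h:\cM^{\higgs, \semis}\to\HH$ (it is the projective morphism of the appendix setup, so that $\pi_\higgs=\pi_\HH\circ h$), I claim there is a chain of canonical isomorphisms
\[
\pi_{e*}\Sym^2 T_{\cM/S}\ \cong\ (\pi_{\higgs *}\mathcal{O}_{\cM^{\higgs, \semis}})_{\mathrm{wt}\,2}\ \cong\ (\pi_{\HH *}\mathcal{O}_{\HH})_{\mathrm{wt}\,2}\ \cong\ (\pi_{s*}K^{\otimes 2}_{\calC/S})^{\vee}\ \cong\ R^1\pi_{s*}T_{\calC/S},
\]
whose composite equals $\rho^{\Hit}$ up to the harmless nonzero scalar coming from the chosen normalisation of the characteristic polynomial; this proves the proposition.

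I would justify the four isomorphisms in turn. \emph{First}: the locus $\cM^{\higgs, \semis}\setminus T^\ast_{\cM/S}$ consists of semistable Higgs bundles whose underlying bundle is not stable, and under the standing hypotheses ($g\geq 2$, with $g\geq 3$ if $r=2$) it has codimension at least $2$ in $\cM^{\higgs, \semis}$ — this is Hitchin's codimension estimate, the same one underlying Proposition \ref{basicfacts}(\ref{basicfactsthree}); since $\cM^{\higgs, \semis}$ is normal, algebraic Hartogs (\cite[Lemma 11.3.11]{vakil:2017}) extends every degree-$2$ relative function on $T^\ast_{\cM/S}$ uniquely to $\cM^{\higgs, \semis}$, the extension again being $\GG_m$-homogeneous of weight $2$. \emph{Second}: $h$ is proper with geometrically connected fibres — a general point of $\HH$ gives a smooth ramified spectral cover $\widetilde{\calC}\to\calC$ and the corresponding fibre is a torsor under the connected group $\Prym(\widetilde{\calC}/\calC)$ — so, $\HH$ being normal, Stein factorisation gives $h_*\mathcal{O}_{\cM^{\higgs, \semis}}=\mathcal{O}_{\HH}$, and taking weight-$2$ parts of the $\GG_m$-equivariant structures yields the isomorphism. \emph{Third}: $\HH$ is the total space of $\bigoplus_{i=2}^{r}\pi_{s*}K^{i}_{\calC/S}$ with $\GG_m$ acting with weight $i$ on the $i$-th summand, and since all weights are $\geq 2$ the weight-$2$ functions are exactly the linear functions on the $i=2$ summand, i.e.\ $(\pi_{s*}K^{\otimes 2}_{\calC/S})^{\vee}$. \emph{Fourth}: relative Serre duality for $\pi_s$ (with $\pi_{s*}K^{\otimes 2}_{\calC/S}$ locally free since $h^1(\calC_t,K^{\otimes 2})=h^0(T_{\calC_t})=0$ for $g\geq 2$) gives $(\pi_{s*}K^{\otimes 2}_{\calC/S})^{\vee}\cong R^1\pi_{s*}T_{\calC/S}$; this is the same Serre duality appearing in Definition \ref{hitchinsymbol}.

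To check that the composite is $\rho^{\Hit}$, I trace a class $\xi\in R^1\pi_{s*}T_{\calC/S}$ through the chain backwards: it corresponds to a linear functional $\langle\xi,-\rangle$ on $\pi_{s*}K^{\otimes 2}_{\calC/S}$, extended to a weight-$2$ function on $\HH$ via projection onto the $i=2$ summand, which pulls back under $h$ — whose degree-$2$ component restricted to $T^\ast_{\cM/S}$ is $(E,\phi)\mapsto\tr(\phi\circ\phi)$ — to the degree-$2$ function $(E,\phi)\mapsto\langle\xi,\tr(\phi\circ\phi)\rangle$ on $T^\ast_{\cM/S}$. Since $\rho^{\Hit}=\pi_{e*}(B^\ast)$ with $B(\phi,\psi)=\tr(\phi\circ\psi)$ by Definition \ref{hitchinsymbol}, this function, viewed as a symmetric $2$-tensor on $T_{\cM/S}$, is precisely $\rho^{\Hit}(\xi)$.

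The main obstacle is the \emph{second} isomorphism: establishing $h_*\mathcal{O}_{\cM^{\higgs, \semis}}=\mathcal{O}_{\HH}$ in the present relative, positive-characteristic setting. Properness of $h$ and normality of source and base are standard, so the genuine input is connectedness of the generic Hitchin fibre, cleanest via the spectral-curve picture (a smooth ramified spectral cover has connected Prym); with that, Stein factorisation plus normality of $\HH$ forces the equality. The codimension estimate feeding the first isomorphism is the only other non-formal ingredient, but it is exactly the one already invoked in Appendix \ref{appendixbasicfacts} — in particular the exclusion of $(r,g)=(2,2)$ enters only there — so it requires no new work.
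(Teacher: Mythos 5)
Your argument is correct and is essentially the paper's own proof: both interpret sections of $\pi_{e*}\Sym^2 T_{\cM/S}$ as weight-$2$ functions on $T^*_{\cM/S}$, extend them to $\cM^{\higgs,\semis}$ by normality and the codimension estimate via algebraic Hartogs, and then use $\GG_m$-equivariance of the Hitchin map together with the weight grading of $\HH$ and relative Serre duality to identify them with linear functionals on $\pi_{s*}K^{\otimes 2}_{\calC/S}$, i.e.\ with $R^1\pi_{s*}T_{\calC/S}$ via $\rho^{\Hit}$. The only difference is that you make explicit (via Stein factorisation and connectedness of the generic spectral Prym fibre) the step $h_*\cO_{\cM^{\higgs,\semis}}=\cO_{\HH}$, which the paper leaves implicit in the phrase ``obtained from regular linear functions on the quadratic part of the Hitchin base.''
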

\begin{proof}
Elements of ${\pi_e}_\ast \Sym^2 T_{\cM / S}$ can be understood as regular functions on the total space of $T^*_{\cM/S}$, of degree $2$ on all tangent spaces.  In turn these extend, by Hartog's theorem, to $\cM^{\higgs,\semis}$, where they are of degree 2 with respect to the $\mathbb{G}_m$-action that scales the Higgs field.  As the Hitchin system is equivariant, they are moreover obtained from regular linear functions on the quadratic part of the Hitchin base, which is exactly given by $R^1 {\pi_s}_\ast T_{\calC / S}$ though $\rho^{\Hit}$.
\end{proof}
To establish that $\mu_{\LL^k}$ is injective, we can again adapt the reasoning from \cite[\S 5]{hitchin:1990}.  By Propositions \ref{thm_mu_O} and \ref{phi-rho-L}, and Lemma \ref{rho-Hit-isom}, it suffices to show that $\Phi$ is injective.
   \begin{lemma}[{\cite[Proposition 5.2]{hitchin:1990}}]\label{useful-lemma}
There exists a canonical isomorphism $$\begin{tikzcd}\Psi:\pi_{\HH*}\mathcal{\cO_{\HH}}\otimes \HH^*\ar[r] & R^1\pi_{\higgs *}\cO,\end{tikzcd}$$  of $\pi_{\HH*}\mathcal{\cO_{\HH}}$-modules
which is equivariant with respect to the natural action of $\mathbb{G}_m$ on $\pi_{\HH*}\mathcal{\cO_{\HH}}\otimes \HH$, and the natural action twisted by weight $-1$ on $R^1\pi_{\higgs *}\cO$.
\end{lemma}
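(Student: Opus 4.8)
The plan is to adapt Hitchin's argument \cite[Proposition 5.2]{hitchin:1990} to the present relative setting, exploiting that the Hitchin map $\phi:\mathcal{M}^{\higgs,\semis}\to\HH$ is, over a large open locus, a Lagrangian abelian fibration, and that $\HH\to S$ is (the total space of) a vector bundle. Two reductions come first. Since $\pi_\HH:\HH\to S$ is an affine morphism, $R^{>0}\pi_{\HH*}$ of any quasi-coherent sheaf on $\HH$ vanishes, so the Leray spectral sequence for $\pi_{\higgs}=\pi_\HH\circ\phi$ degenerates and gives $R^1\pi_{\higgs*}\cO\cong\pi_{\HH*}\bigl(R^1\phi_*\cO_{\mathcal{M}^{\higgs,\semis}}\bigr)$ --- and likewise $\pi_{\higgs*}\cO=\pi_{\HH*}\cO_\HH$, using $\phi_*\cO_{\mathcal{M}^{\higgs,\semis}}=\cO_\HH$ (normality, Stein factorisation). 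Moreover, as $\HH$ is the total space of the vector bundle $\bigoplus_{i=2}^r\pi_{s*}K^i_{\calC/S}$ over $S$, we have $\Omega^1_{\HH/S}\cong\pi_\HH^*(\HH^*)$, so any identification of $R^1\phi_*\cO_{\mathcal{M}^{\higgs,\semis}}$ with $\Omega^1_{\HH/S}$ (up to a $\GG_m$-weight twist) becomes, via the projection formula, the desired isomorphism $R^1\pi_{\higgs*}\cO\cong\HH^*\otimes\pi_{\HH*}\cO_\HH$, manifestly $\pi_{\HH*}\cO_\HH$-linear. Everything thus reduces to identifying the coherent sheaf $R^1\phi_*\cO_{\mathcal{M}^{\higgs,\semis}}$ on $\HH$.

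Over the regular locus $\HH^{\operatorname{reg}}$, $\phi$ is smooth and proper with fibres principally polarised abelian varieties (Prym-type varieties of the smooth spectral covers of $\calC$), and these fibres are Lagrangian for the relative symplectic form on $\mathcal{M}^{\higgs,\semis}/S$. As for any Lagrangian torus fibration, the symplectic form then induces a canonical isomorphism $R^1\phi_*\cO\cong\Omega^1_{\HH/S}$ over $\HH^{\operatorname{reg}}$: the conormal bundle of a fibre is identified with $\phi^*\Omega^1_{\HH/S}$, while the principal polarisation identifies $R^1\phi_*\cO$ with the relative Lie algebra of the fibres; since $\omega$ has $\GG_m$-weight $1$, this isomorphism is equivariant only up to a weight shift. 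The same reasoning applies over the larger open subset $\HH^{\operatorname{int}}\subset\HH$ of integral spectral curves, where the fibres of $\phi$ are compactified Jacobians (or Pryms) of integral curves, still of the expected arithmetic genus and still ``cohomologically Lagrangian''. All of this is valid in arbitrary characteristic within the restrictions already imposed, the spectral correspondence, Serre duality, and cohomology-and-base-change being available throughout.

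The step I expect to be the main obstacle is extending this identification from $\HH^{\operatorname{int}}$ to all of $\HH$. The complement of $\HH^{\operatorname{int}}$ --- the locus of reducible or non-reduced spectral curves --- has codimension at least $2$ in the smooth scheme $\HH$, so it suffices to know that $R^1\phi_*\cO_{\mathcal{M}^{\higgs,\semis}}$ is reflexive: then its restriction to $\HH^{\operatorname{int}}$, which is the locally free sheaf $\pi_\HH^*(\HH^*)$, extends uniquely, and the isomorphism with $\pi_\HH^*(\HH^*)$ extends with it. Reflexivity should follow from the normality of $\mathcal{M}^{\higgs,\semis}$ together with the flatness of $\phi$, in the Hartogs-type spirit of the other arguments in this appendix; alternatively one checks directly that $R^1\phi_*\cO_{\mathcal{M}^{\higgs,\semis}}$ is locally free, the fibrewise $h^1(\cO)$ being constant along the flat family of spectral curves. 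Finally, a bookkeeping of $\GG_m$-weights throughout --- weight $i$ on $\pi_{s*}K^i_{\calC/S}$, weight $1$ on $\omega$ --- yields exactly the weight-$(-1)$ twist on $R^1\pi_{\higgs*}\cO$ asserted in the statement, and $\pi_{\HH*}\cO_\HH$-linearity is clear from the construction.
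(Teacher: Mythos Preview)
Your approach and the paper's use the same underlying geometry (the Lagrangian abelian fibration over the good locus of $\HH$, and the polarisation coming from $\cL$), but the order of operations differs in a way that matters for the step you rightly flag as the main obstacle.

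The paper \emph{first} defines $\Psi$ globally, without reference to any open locus: a section of $\HH^*$ is a fibrewise-linear function on $\HH$, which pulls back along $\phi$ to a regular function on $\cM^{\higgs,\semis}$; the relative symplectic form turns this into a Hamiltonian vector field (tangent to the Hitchin fibres), and cupping with the relative Atiyah class $[\cL]$ sends this to $R^1\pi_{\higgs*}\cO$. Each of these steps is globally defined, so $\Psi$ exists on all of $S$ from the outset. Only \emph{then} does the paper restrict to the locus $\HH^\circ$ of (semi-)abelian fibres to check that $\Psi$ is an isomorphism there --- where it reduces to the standard fact that $\cup[\cL]:H^0(X,T_X)\to H^1(X,\cO_X)$ is an isomorphism for a polarised abelian variety $X$. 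Note that this is exactly your ``principal polarisation'' step, so the two isomorphisms agree where both are defined.

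Your route instead constructs the isomorphism over $\HH^{\operatorname{reg}}$ (or $\HH^{\operatorname{int}}$) and then needs to \emph{extend} it across the bad locus. This forces you to establish reflexivity of $R^1\phi_*\cO_{\cM^{\higgs,\semis}}$, and neither of your suggested justifications is quite solid: normality plus flatness does not by itself yield reflexivity of higher direct images, and the claim that $h^1(\cO)$ is constant along all Hitchin fibres (including the non-integral ones) would itself require proof. The paper's global-first construction sidesteps this entirely: once $\Psi$ is defined everywhere as a map between $\pi_{\HH*}\cO_\HH$-modules and shown to be an isomorphism over a codimension-$\geq 2$ complement in the smooth affine $\HH$, Hartogs-type arguments on the source (which is free) handle the rest. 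So your ingredients are correct, but you would do well to reorganise along the paper's lines and define $\Psi$ globally via $\cup[\cL]$ from the start.
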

\begin{proof} Indeed, sections of $\HH^*$ give rise to fiber-wise linear functions on $\HH$, which pull back by $\phi$ to functions on $\cM^{\higgs}$.   As the latter has an algebraic symplectic structure on $\cM^{\higgs, \stable}$ extending the canonical one on $T^*{\cM}$, these give rise to hamiltonian vector fields on $\cM^{\higgs, \stable}$ which are tangent to the fibres of $\phi$.  Moreover, the inverse of the determinant-of-cohomology line bundle $\cL$ naturally extends to $\cM^{\higgs}$, and is relatively ample with respect to $\phi$.  Taking the cup product with its relative Atiyah class gives a natural morphism $\pi_{\higgs *} T_{\cM^{\higgs}/S}\rightarrow R^1\pi_{\higgs *}\cO$.  The composition gives a morphism $\HH\rightarrow  R^1\pi_{\higgs *}\cO$, which naturally extends as a morphism of $\pi_{\HH*}\mathcal{\cO_{\HH}}$-modules  to the desired morphism $\Psi$.

To show that $\Psi$ is an isomorphism, it can be argued as follows: as $\pi_{\higgs}$ factors over $\pi_{\HH}$, and the latter is an affine morphism, we have that $R^1\pi_{\higgs *}\cO_{\cM^{\higgs}}\cong \pi_{\HH*}\left(R^1\phi_{*}\cO_{\cM^{\higgs}}\right)$.  Now, through the theory of abelianisation, we know that over a locus $\HH^{\circ}$ whose complement has sufficiently high co-dimension, the morphism $\phi$ is a family of (semi-)abelian varieties.  The line bundle $\cL$ restricts to an ample one on the fibres, and for those fibres $X$ it is known that cupping with $[\cL]$ is an isomorphism $H^0(X, T_X)\rightarrow H^1(X, \cO_X)$.  As the vector fields on $\cM^{\higgs}$ are independent, on each such $X$ the space $H^0(X, T_X)$ is given by the vector field coming from $\HH^*$.  As a result, we find that, on $\HH^{\circ}$, $R^1\phi_{*}\cO_{\cM^{\higgs}}$ is a trivial vector bundle, and that the map $\Psi$ is indeed an isomorphism.

It is also straightforward to observe that the map $\Psi$ is in fact equivariant for the natural $\mathbb{G}_m$-action that is defined on all spaced, induced by the scaling of Higgs fields, provided that we twist the action on $R^1\pi_{\HH*}\cM^{\higgs}$ by a weight $-1$.  \end{proof}

\begin{proposition}[{\ref{basicfacts}(\ref{basicfactsfour})}] \label{nor1}We have that $R^1\pi_{e*}\cO_{\cM}=\{0\}$.
\end{proposition}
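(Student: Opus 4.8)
The plan is to deduce this vanishing from Lemma~\ref{useful-lemma}, by isolating the weight-zero part of $R^1\pi_{\higgs*}\cO_{\cM^{\higgs,\semis}}$ under the $\GG_m$-action rescaling Higgs fields; this is the algebro-geometric counterpart of Hitchin's argument in \cite[\S 5]{hitchin:1990}.

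First I would recall that the relative cotangent bundle $T^\ast_{\cM/S}=\pi_{n*}\bigl(\End^0(\cE)\otimes\pi_w^\ast K_{\calC/S}\bigr)$ embeds in $\cM^{\higgs,\semis}$ as the open subscheme of Higgs bundles whose underlying vector bundle is stable (such a pair being automatically a stable Higgs bundle), and that its complement, the locus of semistable Higgs bundles with non-stable underlying bundle, has codimension at least $3$ under our standing hypotheses. Since $\cM^{\higgs,\semis}$ has rational -- indeed symplectic -- singularities, and in particular is Cohen--Macaulay, restriction to this open subset induces isomorphisms $R^i\pi_{\higgs*}\cO_{\cM^{\higgs,\semis}}\cong R^i(\pi_e\circ q)_*\cO_{T^\ast_{\cM/S}}=R^i\pi_{e*}\bigl(\Sym^\bullet T_{\cM/S}\bigr)$ for $i\le 1$, where $q\colon T^\ast_{\cM/S}\to\cM$ is the (affine) projection. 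All these maps are equivariant for the $\GG_m$-action, and on the right the summand $\Sym^k T_{\cM/S}$ lies in weight $-k$; hence the weight-zero part of $R^1\pi_{\higgs*}\cO_{\cM^{\higgs,\semis}}$ equals $R^1\pi_{e*}\cO_{\cM}$, and it suffices to prove that $R^1\pi_{\higgs*}\cO_{\cM^{\higgs,\semis}}$ has no part of weight $0$.

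For this I would apply Lemma~\ref{useful-lemma}: the isomorphism $\Psi$ identifies $R^1\pi_{\higgs*}\cO_{\cM^{\higgs,\semis}}$, up to a shift of weight by $1$, with $\pi_{\HH*}\cO_{\HH}\otimes\HH^\ast$. Now $\HH$ is the total space of $\bigoplus_{i=2}^{r}\pi_{s*}K^i_{\calC/S}$, on whose fibres $\GG_m$ acts with weights $\{2,\dots,r\}$; therefore $\pi_{\HH*}\cO_{\HH}$ carries only weights in $\{0,-2,-3,\dots\}$ and $\HH^\ast$ only weights in $\{-2,\dots,-r\}$, so $\pi_{\HH*}\cO_{\HH}\otimes\HH^\ast$ has all weights $\le -2$, and after the shift $R^1\pi_{\higgs*}\cO_{\cM^{\higgs,\semis}}$ has all weights $\le -1$. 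In particular it has vanishing weight-zero part, which gives $R^1\pi_{e*}\cO_{\cM}=\{0\}$.

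The hard point is the geometric input invoked in the second paragraph: that the complement of $T^\ast_{\cM/S}$ in $\cM^{\higgs,\semis}$ has codimension $\ge 3$ and that $\cM^{\higgs,\semis}$ has enough depth along it. The earlier uses of this circle of ideas (Proposition~\ref{basicfacts}(\ref{basicfactsthree}), Lemma~\ref{rho-Hit-isom}) only extended \emph{regular functions} off a closed set of codimension $\ge 2$ by algebraic Hartogs, whereas comparing $R^1\pi_{\higgs*}$ requires the stronger codimension estimate together with the Cohen--Macaulay property; I would establish these codimension bounds for the Higgs moduli space, following \cite[\S 5]{hitchin:1990}, as the genuinely new step.
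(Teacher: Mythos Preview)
Your argument is essentially the paper's own: identify $R^1\pi_{e*}\cO_{\cM}$ with the weight-zero part of $R^1\pi_{\higgs *}\cO_{\cM^{\higgs,\semis}}$, then use the isomorphism $\Psi$ of Lemma~\ref{useful-lemma} and the weight structure on $\pi_{\HH*}\cO_{\HH}\otimes\HH^\ast$ to see that no such part exists. The paper's proof is much terser --- it simply asserts the first identification and notes that under $\Psi$ the relevant piece would sit in weight $-1$, which does not occur --- whereas you have spelled out both steps, including the weight bookkeeping, correctly.

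Where you genuinely add something is in flagging the passage from $T^\ast_{\cM/S}$ to $\cM^{\higgs,\semis}$ at the level of $R^1$: you are right that the Hartogs-type argument used in Proposition~\ref{basicfacts}(\ref{basicfactsthree}) and Lemma~\ref{rho-Hit-isom} only handles $R^0$, and that comparing $R^1$ requires the stronger input of codimension $\geq 3$ together with a depth condition. The paper's terse proof suppresses exactly this point. One caution: you invoke ``rational --- indeed symplectic --- singularities'' to obtain Cohen--Macaulayness, but since the paper is written to work in arbitrary (odd, non-dividing) characteristic, you should check that whatever depth statement you use is available there as well; the symplectic-singularities literature is largely in characteristic zero, so it may be cleaner to argue the local-cohomology vanishing directly from the geometry of the Higgs moduli space rather than through that route.
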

\begin{proof}
It suffices to remark that sections of $R^1\pi_{e*}\cO_{\cM}$ correspond to sections of\\ $R^1\pi_{\higgs*}\cO_{\cM^{\higgs,\semis}}$ of weight $0$, which would correspond under $\Psi$ to sections of weight $-1$, of which there are none.
\end{proof}
\begin{proposition}\label{cap-isom}
The map $\cup[\LL]:\pi_{e*}\Sym^2T_{\cM/S}\rightarrow R^1\pi_{e*} T_{\cM/S}$ is an isomorphism.
\end{proposition}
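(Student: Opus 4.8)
The plan is to reduce to the statement that $\Phi$ is an isomorphism, and then to extract it as a weight-graded piece of the isomorphism $\Psi$ of Lemma \ref{useful-lemma}. By Proposition \ref{phi-rho-L} we have $-\Phi = (\cup[\LL])\circ\rho^{\Hit}$, and $\rho^{\Hit}$ is an isomorphism by Lemma \ref{rho-Hit-isom}; so it suffices to prove that $\Phi\colon R^1\pi_{s*}T_{\calC/S}\to R^1\pi_{e*}T_{\cM/S}$ is an isomorphism, and I will identify it, up to a non-zero scalar, with a graded component of $\Psi$.

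First I would set up the weight bookkeeping on the Higgs side. Using the zero section $\cM\hookrightarrow T^\ast_{\cM/S}\hookrightarrow\cM^{\higgs,\semis}$ and the $\mathbb{G}_m$-action scaling the Higgs field, one decomposes $R^1\pi_{\higgs*}\cO_{\cM^{\higgs,\semis}}$ into weight spaces. By the same extension argument already used for Propositions \ref{rho-Hit-isom} and \ref{nor1} (the complement of $T^\ast_{\cM/S}$ in $\cM^{\higgs,\semis}$, and the complement of the regular Hitchin locus, both have codimension $\ge 2$), the weight $m$ summand is canonically $R^1\pi_{e*}\Sym^m T_{\cM/S}$: the weight $0$ piece is zero (this is exactly Proposition \ref{nor1}) and the weight $1$ piece is $R^1\pi_{e*}T_{\cM/S}$. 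On the source of $\Psi$, the lowest-weight summand of $\pi_{\HH*}\cO_{\HH}\otimes\HH^\ast$ is $(\pi_{s*}K_{\calC/S}^{\otimes 2})^{\vee}\cong R^1\pi_{s*}T_{\calC/S}$ by relative Serre duality, and the twist by $-1$ in Lemma \ref{useful-lemma} makes the equivariant map $\Psi$ carry it isomorphically onto the weight $1$ summand $R^1\pi_{e*}T_{\cM/S}$ of the target.

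The next step is to verify that, under these identifications, this restriction of $\Psi$ equals $-2\Phi$. For a local section $v$ of $R^1\pi_{s*}T_{\calC/S}=(\pi_{s*}K_{\calC/S}^{\otimes 2})^{\vee}$, form the corresponding fibre-wise linear function $\ell_v$ on the quadratic part of the Hitchin base and pull it back by the Hitchin map $\phi$ to a function $\widehat q$ on $\cM^{\higgs,\semis}$; as in the proof of Lemma \ref{rho-Hit-isom}, $\widehat q$ restricts on $T^\ast_{\cM/S}$ to the fibre-wise quadratic function attached to $\rho^{\Hit}(v)\in\pi_{e*}\Sym^2 T_{\cM/S}$. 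Its Hamiltonian vector field $X_{\widehat q}$, for the canonical symplectic form extended to $\cM^{\higgs,\semis}$, vanishes along the zero section $\cM$ (every term has positive fibre degree), and a short computation in Darboux coordinates shows that its first-order part in the normal directions along $\cM$, which lies in $T_{\cM/S}\otimes T_{\cM/S}$, is $2\rho^{\Hit}(v)$. Since $\LL$ extends $\mathbb{G}_m$-equivariantly to $\cM^{\higgs,\semis}$ with relative Atiyah class of weight $0$ restricting to $[\LL]$ on $\cM$, the leading term along $\cM$ of $\Psi(v)=X_{\widehat q}\cup[\LL]$ is $(\cup[\LL])(2\rho^{\Hit}(v))=-2\,\Phi(v)$, the last equality being Proposition \ref{phi-rho-L}. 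As $\Psi$ is an isomorphism, $\Phi$ is too, hence so is $\cup[\LL]$.

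I expect the main obstacle to be the identification in the second paragraph, namely that the weight $m$ part of $R^1\pi_{\higgs*}\cO$ is $R^1\pi_{e*}\Sym^m T_{\cM/S}$: because $\cM^{\higgs,\semis}$ is only normal and not smooth, one cannot simply commute $R^1$ with restriction to the open $T^\ast_{\cM^{\stable}/S}$ or with passing to the weight-graded pieces, so this needs a careful argument controlling the cohomology of $\cO$ along the (high-codimension but possibly singular) boundary — most safely by first working over the regular Hitchin locus, where $\phi$ is an abelian fibration and $R^1\phi_*\cO$ is the Lie algebra of the dual fibration, and only afterwards extending. The compatibility assertion used in the third paragraph (that taking leading terms along the zero section intertwines $\cup[\LL]$ on $\cM^{\higgs,\semis}$ with $\cup[\LL]$ on $\cM$, with the correct weight) is routine but should be spelled out with the weights kept straight.
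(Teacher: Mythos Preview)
Your proposal is essentially correct and follows the same strategy as the paper: both arguments extract the relevant piece of the isomorphism $\Psi$ of Lemma~\ref{useful-lemma} and identify it with the map in question via a local Hamiltonian (Darboux-type) computation along the zero section $\cM\hookrightarrow\cM^{\higgs}$.

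There is one worthwhile difference. You reduce first to showing $\Phi$ is an isomorphism and then propose to isolate the weight-$1$ summand of $R^1\pi_{\higgs*}\cO_{\cM^{\higgs,\semis}}$ via a full $\mathbb{G}_m$-weight decomposition, which (as you yourself flag) is the delicate step because of the singularities of $\cM^{\higgs,\semis}$. The paper avoids this entirely: instead of decomposing by weight, it uses the first-order infinitesimal neighbourhood $\cO^{(1)}$ of $\cM$ in $\cM^{\higgs}$ and the short exact sequence
\[
0\longrightarrow N^*_{\cM/\cM^{\higgs}}\cong T_{\cM/S}\longrightarrow \cO^{(1)}\longrightarrow \cO_{\cM}\longrightarrow 0,
\]
together with $R^1\pi_{e*}\cO_{\cM}=0$, to obtain a direct restriction map $R^1\pi_{\higgs*}\cO_{\cM^{\higgs}}\to R^1\pi_{e*}\cO^{(1)}\cong R^1\pi_{e*}T_{\cM/S}$. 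This extracts exactly the piece you want without any global weight-splitting, and the local computation then shows directly that $\cup 2[\LL]$ equals the relevant component of $\Psi$ (so the paper gets $\cup[\LL]$ an isomorphism first and deduces $\Phi$ is an isomorphism as a corollary, the reverse of your order). Your route would work too once the weight identification is justified, but the infinitesimal-neighbourhood trick is the cleaner way around the obstacle you identified.
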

\begin{proof}
We now want to restrict the isomorphism $\Psi$ from \ref{useful-lemma} to the sub-bundle of $\pi_{\HH*}\cO_{\HH}\otimes \HH^*$ of weight 2, which corresponds exactly to fibre-wise linear functionals on $\pi_{s*}K^2_{\calC/S}$, which by relative Serre duality is exactly given by $R^1\pi_{s*} T_{\calC/S}$.  On this space $\Psi$ restricts to give an isomorphism to $R^1\pi_{e*}T_{\cM/S}$.  To show that this is a multiple of $\Phi$, one can argue as follows: if $\cO^{(1)}$ is the structure sheaf of the first order infinitesimal neighborhood of $\cM$ in $\cM^{\higgs}$ (cfr. \cite[\href{https://stacks.math.columbia.edu/tag/05YW}{Tag 05YW}]{stacks-project}), we have the short exact sequence on $\cM$ 
$$\begin{tikzcd} 0\ar[r] & N^*_{\cM/\cM^{\higgs}} \ar[r] & \cO^{(1)} \ar[r] & \cO_{\cM}\ar[r] & 0.
\end{tikzcd}
$$
Here $N^*_{\cM/\cM^{\higgs}}$ is the co-normal bundle of $\cM$ in $\cM^{\higgs}$, which is canonically isomorphic to the tangent bundle $T_{\cM/S}$.  As by  Proposition \ref{nor1} we have that $R^1\pi_{e*}\cO_{\cM}=\{0\}$, this gives $$R^1\pi_{e*} T_{\cM/S}\cong R^1\pi_{e*}\cO^{(1)}.$$
If $\mathcal{I}$ is the ideal sheaf of $\cM$ in $\cM^{\higgs}$, we have that $\cO^{(1)}=\left(\cO_{\cM^{\higgs}}\big/ \mathcal{I}^2\right) \Big|_{\cM}$, and hence we have a restriction map $$\begin{tikzcd}R^1\pi_{\higgs *}\cO_{\cM^{\higgs}}\ar[r] & R^1\pi_{e*}\cO^{(1)}\cong R^1\pi_{e*} T_{\cM/S}\end{tikzcd},$$
which is the identity on $R^1\pi_{e*} T_{\cM/S}$ (sitting inside $R^1\pi_{\higgs *}\cO_{\cM^{\higgs}}$ as the weight $1$ part).  So we only need to keep track of first order information in the normal direction.  We now claim that, for any $\Delta\in R^1\pi_{\higgs *}(\Omega^1_{\cM^{\higgs}/S})$ which restricts to $\widetilde{\Delta}\in R^1\pi_{e*}(\Omega^1_{\cM/S})$ the following diagram is commutative:
\begin{equation}\label{lastdiagram}\begin{tikzcd}[row sep=small,column sep=small]
\ &\pi_{e*}\Sym^2 T_{\cM/S} \ar[rr, "{\cup 2\widetilde{\Delta}}"] \ar[dl, hook]&\ & R^1\pi_{e*}T_{\cM / S} &\  \\
\pi_{\higgs*}\cO_{\cM^{\higgs}}\ar[dr, "d" '] &\ &\ &\ & R^1\pi_{e*}\cO^{(1)}\ar[ul, "{\cong}" ']\\
\ & \pi_{\higgs *}\Omega^1_{\cM^{\higgs}/S}\ar[r, "\omega", "\cong"'] & \pi_{\higgs*}T_{\cM^{\higgs}/S}\ar[r, "{\cup\Delta}"]
& R^1\pi_{\higgs*}\cO_{\cM^{\higgs}}\ar[ur, "{\text{restrict}}" '] &\
\end{tikzcd}\end{equation}
In \cite[page 379]{hitchin:1990} this was shown using holomorphic Darboux coordinates on the total space of $T_{\cM/S}$, coming from (holomorphic) coordinates on $\cM$.  The reasoning does not strictly speaking need the latter choice though, and it suffices to work with a local trivialisation of $T_{\cM/S}$.  In this sense it also goes through in an algebraic context, as follows.  Let $U_{i}$ be a covering of $\cM$ by open affines, such that $T_{\cM/S}\big|_{U_{i}}$ is free.  For a fixed $i$ we choose generators $e_1, \ldots, e_n$ of the latter.  These can also be understood as functions $f_1, \ldots, f_n$ on $T^*_{\cM/S}\big|_{U_{\gamma}}$.  If we denote the dual sections to $e_1, \ldots, e_n$ as $e^1, \ldots, e^n$, then we can interpret their pull backs as one-forms on the total space of $T^*_{\cM/S}\big|_{U_{\gamma}}$.  The tautological one-form $\theta$ on the total space of $T^*_{\cM/S}$ can now be written locally as $\theta=\sum_{\alpha} f_{\alpha} e^{\alpha}$, and the canonical symplectic form is therefore $\omega=-d\theta=\sum_{\alpha} df_{\alpha}\wedge e^{\alpha}$.  If a section of $\pi_{e*}\Sym^2 T_{\cM/S}$ is locally written as $G=\sum_{\alpha,\beta}G^{\alpha\beta}e_{\alpha}\odot e_{\beta}$ (with the $G^{\alpha\beta}
\in \cO_{U_{i}}$), then the corresponding element of $\pi_{\higgs*}\cO_{\cM^{\higgs}}$ can be written as $\sum_{\alpha,\beta}G^{\alpha\beta}f_{\alpha}f_{\beta}$.  The corresponding Hamiltonian vector field (with respect to $\omega$) in $\pi_{\higgs *}$ is locally written as $$-\sum_{\alpha,\beta,\gamma}e_{\gamma}(G^{\alpha\beta})f_{\alpha} f_{\beta}h^{\gamma}+2\sum_{\alpha,\beta}G^{\alpha\beta}f_{\alpha} e_{\beta},$$  (where, with a slight abuse of notation,  we denote by $e_1, \ldots, e_n, h^1, \ldots, h^n$ the  elements of the basis of $T_{\cM^{\higgs}/S}$ dual to $e^1, \ldots, e^n, df_1, \ldots, df_n$).
After taking the cup product with $\Delta$ (which we represent by a \v{C}ech cohomology class with respect to the open covering $T^*_{U_{i}/S}$), and restricting to $\cO^{(1)}$, this gives indeed $2G\cup \widetilde{\Delta}$.
We conclude by applying this to $\Delta=[\LL]$, in which case the `bottom path' of (\ref{lastdiagram}) is given by a component of the isomorphism $\Psi$. 
 \end{proof}
 \begin{corollary}
 The map $\Phi$ from (\ref{kappaphi}) is an isomorphism.
 \end{corollary}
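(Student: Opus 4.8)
The plan is simply to assemble the results already established, since $\Phi$ has now been expressed as a composition of maps each of which is known to be an isomorphism. First I would invoke Proposition~\ref{phi-rho-L}, which gives the factorisation $-\Phi = (\cup[\LL])\circ\rho^{\Hit}$ of sheaf morphisms on $S$. It then suffices to observe that both factors are isomorphisms: the Hitchin symbol $\rho^{\Hit}\colon R^1\pi_{s*}T_{\calC/S}\to\pi_{e*}\Sym^2 T_{\cM/S}$ is an isomorphism by Proposition~\ref{rho-Hit-isom}, and the cup-product map $\cup[\LL]\colon\pi_{e*}\Sym^2 T_{\cM/S}\to R^1\pi_{e*}T_{\cM/S}$ is an isomorphism by Proposition~\ref{cap-isom}. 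Hence $\Phi$, which differs from $(\cup[\LL])\circ\rho^{\Hit}$ only by a global sign, is itself an isomorphism; the sign plays no role.

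There is no real obstacle left at this stage, since all the substance sits in the three inputs. The statement about the Hitchin symbol is soft, following from Hartogs extension of the relevant functions to $\cM^{\higgs,\semis}$ together with the $\mathbb{G}_m$-equivariance of the Hitchin map. The genuinely delicate ingredient is Proposition~\ref{cap-isom}: there one restricts the isomorphism $\Psi$ of Lemma~\ref{useful-lemma} (obtained via abelianisation of the Hitchin system) to the weight-$2$ part of $\pi_{\HH*}\cO_{\HH}\otimes\HH^*$, which by relative Serre duality is $R^1\pi_{s*}T_{\calC/S}$, and matches it against $\cup[\LL]\circ\rho^{\Hit}$ via the commutativity of diagram~(\ref{lastdiagram}) and the local Darboux-coordinate computation of the Hamiltonian vector field attached to a quadratic symbol. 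One could, alternatively, bypass Proposition~\ref{phi-rho-L} and read off the isomorphism $\Phi$ directly from that weight-$2$ restriction of $\Psi$; but routing the argument through Proposition~\ref{phi-rho-L} keeps the bookkeeping cleanest, and with all three inputs in hand the present corollary is immediate.
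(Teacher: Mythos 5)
Your proof is correct and is exactly the paper's argument: the corollary is obtained by combining Proposition~\ref{phi-rho-L} with the isomorphism statements of Proposition~\ref{rho-Hit-isom} and Proposition~\ref{cap-isom}, the overall sign being irrelevant.
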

 \begin{proof}
 This follows immediately by combining Proposition \ref{rho-Hit-isom}, Proposition \ref{cap-isom}, and Proposition \ref{phi-rho-L}.
 \end{proof}
Finally, as a corollary we also get the final fact we need in the proof of the flatness of the Hitchin connection (Theorem \ref{connection-flat}):
\begin{lemma}\label{mu-L-inj}
The map $\mu_{\LL^k}$ is injective.
\end{lemma}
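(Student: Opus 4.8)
The plan is to reduce the injectivity of $\mu_{\LL^k}$ to the injectivity of $\Phi$, which has just been established as a corollary of Propositions \ref{rho-Hit-isom}, \ref{cap-isom}, and \ref{phi-rho-L}. First I would recall from Proposition \ref{thm_mu_O} that $\mu_{\LL^k}=\cup[\LL^k]+\cup(-\tfrac12[K_{\cM/S}])$, and from \eqref{thetadet2} that $K_{\cM/S}=\LL^{-2r}$; combining these with Lemma \ref{extens} gives $\mu_{\LL^k}=\cup(k+r)[\LL]$, i.e. $\mu_{\LL^k}$ is $(k+r)$ times the operation $\cup[\LL]$ appearing in Proposition \ref{phi-rho-L}. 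Since we assume the characteristic does not divide $r+k$, injectivity of $\mu_{\LL^k}$ is equivalent to injectivity of $\cup[\LL]:\pi_{e*}\Sym^2 T_{\cM/S}\to R^1\pi_{e*}T_{\cM/S}$.

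Next I would invoke Proposition \ref{phi-rho-L}, which states that $-\Phi = (\cup[\LL])\circ\rho^{\Hit}$ as maps $R^1\pi_{s*}T_{\calC/S}\to R^1\pi_{e*}T_{\cM/S}$. By Lemma \ref{rho-Hit-isom} (Proposition \ref{rho-Hit-isom} in the appendix), $\rho^{\Hit}$ is an isomorphism, so $\cup[\LL] = -\Phi\circ(\rho^{\Hit})^{-1}$. The Corollary just above shows $\Phi$ is an isomorphism, hence in particular injective; therefore $\cup[\LL]$ is injective, and so is $\mu_{\LL^k}=(k+r)\cup[\LL]$, which is what we wanted.

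Strictly speaking one should note that these are morphisms of sheaves on $S$, so ``injective'' means injective on sections over every open, and all the cited isomorphisms are sheaf isomorphisms, so the argument is unaffected. There is essentially no obstacle here: the lemma is a formal consequence of results already in place, the only point requiring care being the bookkeeping of the scalar $k+r$ and the hypothesis $p\nmid(k+r)$ that makes multiplication by this scalar invertible. In fact the stronger statement that $\mu_{\LL^k}$ is an isomorphism follows the same way, using that $\Phi$ is an isomorphism rather than merely injective, but injectivity is all that Theorem \ref{thm_flatness} requires.
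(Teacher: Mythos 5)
Your proposal is correct and matches the paper's own (implicit) argument: the paper likewise reduces the statement via Proposition \ref{thm_mu_O}, the identity $K_{\cM/S}=\LL^{-2r}$, Proposition \ref{phi-rho-L} and Lemma \ref{rho-Hit-isom} to the injectivity of $\Phi$, which is then supplied by the Corollary that $\Phi$ is an isomorphism (itself resting on Proposition \ref{cap-isom}). The only cosmetic difference is that one could cite Proposition \ref{cap-isom} directly for the injectivity of $\cup[\LL]$ instead of passing back through $\Phi$, but this is the same chain of results, and your bookkeeping of the factor $k+r$ and the hypothesis $p\nmid(r+k)$ is exactly what is needed.
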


\def\cftil#1{\ifmmode\setbox7\hbox{$\accent"5E#1$}\else
  \setbox7\hbox{\accent"5E#1}\penalty 10000\relax\fi\raise 1\ht7
  \hbox{\lower1.15ex\hbox to 1\wd7{\hss\accent"7E\hss}}\penalty 10000
  \hskip-1\wd7\penalty 10000\box7}
  \def\cftil#1{\ifmmode\setbox7\hbox{$\accent"5E#1$}\else
  \setbox7\hbox{\accent"5E#1}\penalty 10000\relax\fi\raise 1\ht7
  \hbox{\lower1.15ex\hbox to 1\wd7{\hss\accent"7E\hss}}\penalty 10000
  \hskip-1\wd7\penalty 10000\box7}
  \def\cftil#1{\ifmmode\setbox7\hbox{$\accent"5E#1$}\else
  \setbox7\hbox{\accent"5E#1}\penalty 10000\relax\fi\raise 1\ht7
  \hbox{\lower1.15ex\hbox to 1\wd7{\hss\accent"7E\hss}}\penalty 10000
  \hskip-1\wd7\penalty 10000\box7}
  \def\cftil#1{\ifmmode\setbox7\hbox{$\accent"5E#1$}\else
  \setbox7\hbox{\accent"5E#1}\penalty 10000\relax\fi\raise 1\ht7
  \hbox{\lower1.15ex\hbox to 1\wd7{\hss\accent"7E\hss}}\penalty 10000
  \hskip-1\wd7\penalty 10000\box7} \def\cprime{$'$}

\end{document}